\documentclass[letterpaper, 10pt]{article}
\usepackage[psamsfonts]{amssymb}
\usepackage{amsmath, amsthm, anysize, enumerate, color, graphicx}

\newtheorem{theorem}{Theorem}[section]
\newtheorem{question}[theorem]{Question}
\newtheorem{problem}[theorem]{Problem}

\newtheorem{lemma}[theorem]{Lemma}
\newtheorem{proposition}[theorem]{Proposition}
\newenvironment{proof_of}[1]{\noindent {\bf Proof of #1:}
	\hspace*{1mm}}{\hspace*{\fill} $\qedsymbol$ }
\newcommand{\widgraph}[2]{\includegraphics[keepaspectratio,width=#1]{#2}}

% % the following creates defns with the usual roman font, not in italics

% Not in italics
\theoremstyle{definition}
\newtheorem{remark}[theorem]{Remark}
\newtheorem{example}[theorem]{Example}

% Notational convenience
\newcommand{\E}{\mathbb{E}}
\newcommand{\R}{\mathbb{R}}
\newcommand{\N}{\mathbb{N}}
\newcommand{\Q}{\mathbb{Q}}
\renewcommand{\P}{\mathbb{P}}
\newcommand{\M}{\mathcal{M}}
\newcommand{\W}{\mathcal{W}}
\newcommand{\sgn}{\text{sgn}}
\newcommand{\conv}{\text{conv}}
\newcommand{\Dom}{\text{Dom}}
\newcommand{\stirlingtwo}[2]{\genfrac{\lbrace}{\rbrace}{0pt}{}{#1}{#2}}

\begin{document}

\title{Maximum entropy distributions on graphs}
\author{Christopher~J.~Hillar\thanks{Redwood Center for Theoretical
    Neuroscience, \texttt{chillar@msri.org}; partially supported by National Science Foundation Grant IIS-0917342 and 
    a National Science Foundation All-Institutes Postdoctoral Fellowship administered by the
    Mathematical Sciences Research Institute through its core grant
    DMS-0441170.} \hspace{20mm} Andre Wibisono\thanks{Department of Electrical Engineering and Computer
    Science, \texttt{aywibisono@wisc.edu}.} \\ $ $\\University of California, Berkeley}

% AMS: 15B48, 60F10, 68T30
% Keywords: maximum entropy, graph distributions, large deviations, matrix inequality

%\date{\vspace{-5ex}}
% \date \today

\maketitle

\begin{abstract}
Inspired by applications to theories of coding and communication in networks of nervous tissue, we study maximum entropy distributions on weighted graphs with a given expected degree sequence. These distributions are characterized by independent edge weights parameterized by a shared vector of vertex potentials. Using the general theory of exponential family distributions, we derive the existence and uniqueness of the maximum likelihood estimator (MLE) of the vertex parameters. We also prove consistency of the MLE from a single  sample in the limit of large graphs, extending results of Chatterjee, Diaconis, and Sly in the unweighted case (the ``$\beta$-model" in statistics). Interestingly, our proofs require tight estimates on the norms of inverses of symmetric, diagonally dominant positive matrices.  Along the way, we derive analogues of the Erd\H{o}s-Gallai criterion of graphical degree sequences for weighted graphs.
\end{abstract}

%---------
\section{Introduction}
\label{Sec:Intro}

In this work, we explore some of the mathematics underlying three probabilistic models of
undirected weighted graphs with fixed sufficient statistics.  More specifically, we characterize and study the maximum entropy
graph distribution given a fixed expected degree sequence in each of the following three cases:
\begin{enumerate}[1.)]
  \item Graphs with bounded edge weights in $\{0,1,\ldots, r-1\}$, for a fixed integer $r \geq 2$;
  \item Graphs with weights in the nonnegative integers; and
  \item Graphs with continuous weights in $[0,\infty)$.
\end{enumerate}

Our main result here, extending findings of \cite{Chatterjee, rinaldo2013maximum} to the first case, is that a single graph 
sample from one of these distributions determines its defining parameters in the limit as the number of vertices in the network goes to infinity. 
Of course, translating such a verbal description into precise mathematics requires preparation, which we carry out in Sections \ref{Sec:General} and \ref{Sec:Specific} (e.g., see Theorems \ref{Thm:ConsistencyFiniteDisc}, \ref{Thm:ConsistencyInfiniteDisc}, and \ref{Thm:ConsistencyCont} concerning the first, second, and third case, respectively).  In the remainder of the introduction, we explain our motivation for studying these models as computational tools for biology as well as outline some of the main ideas.

The 20th century concept of ``maximum entropy" is a powerful normative principle in statistical modeling that gives mathematical precision to William of Ockham's ``razor" from seven hundred years ago.  Given no more knowledge of a system other than poolings of 
measured quantities (correlations, degrees, etc.), what is the most generic or simple model for the observed phenomena?  
One principled answer is to choose among all distributions satisfying the measured constraints that (unique) model with the largest Shannon entropy \cite{shannon48}, \cite[Chapter~12]{coverthomas}.  Notably, this approach was applied in statistical physics with great success by Jaynes \cite{jaynes1957}, who pioneered the concept.  More recently, experimentalists and computational biologists have started to reveal insights from data by embracing the application of this normative approach to their disciplines.  

For instance, maximum entropy distributions have been found to well-model neural population activity in retina \cite{Schneidman2006,shlens2006}, amino acid interactions in proteins \cite{seno2008maximum}, antibody diversity \cite{Mora2010}, and flock behavior \cite{Bialek2012}, among many other examples \cite{de2018}. More specifically to our context of graph distributions, they have been useful as network models of data both in discrete \cite{giusti2015clique} and weighted \cite{desmarais2012plos} settings. In all these cases, and especially where data is limited, results implying that few samples suffice to pin down an experimental finding are essential for scientific reliability.  Theoretical study of these statistical ensembles is also important for other practical matters such as improving algorithms for parameter estimation or determining model fit.

That being said, our primary motivation for embarking on this work was instead to promote the idea that mathematical properties of probabilistic models on graphs can be exploited for computation, perhaps even by biological organisms -- and, more specifically, nervous tissue -- in their mysteriously complex processing of matter.  The theory behind brains is in its infancy, but there are still several experimental constraints and design principles \cite{rieke1999, sterling2015principles} to guide the mathematical biologist. Brains appear to have discrete aspects (spikes, vesicles, etc.) as well as continuous ones (local field potentials, spike timing, etc.).  Additionally, some form of connectivity between functional units is ubiquitous, and there are several projects devoted to mapping the interacting structures (e.g., www.humanconnectomeproject.org). Nervous tissue also often seems to operate in a stochastic manner but nonetheless can have reliable features correlating with stimuli or behavior, such as average numbers of action potentials (i.e., spikes) in a window of time (so-called ``firing rates").
Another fundamental aspect of brain dynamics is the extraordinary parallelism inherent to its functioning, allowing for realtime complex computation with the energy requirements of an ordinary light bulb.\footnote{IBM's Watson Jeopardy champion required a factor of at least 4000 times more power than its human competitors.}
\begin{figure}[t!]
	\begin{center}
\includegraphics[width=1.5 in]{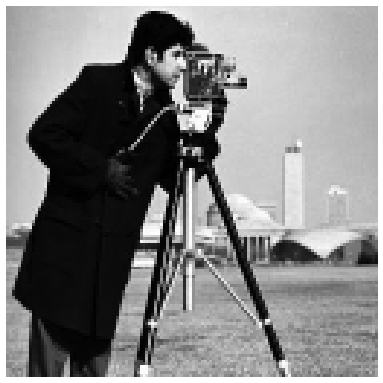}
\includegraphics[width=1.5 in]{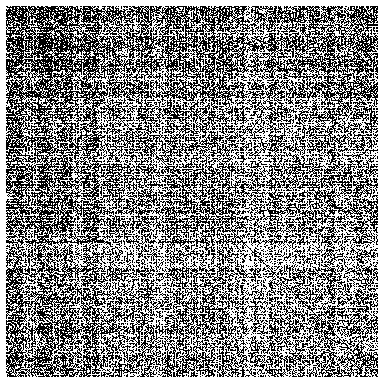}
\includegraphics[width=1.5 in]{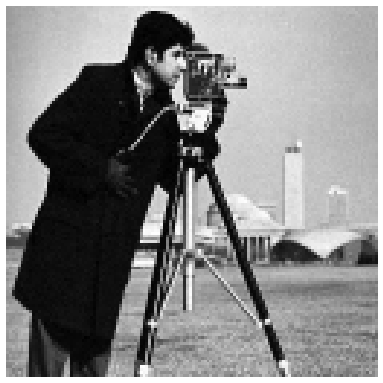}
\includegraphics[width=1.8 in]{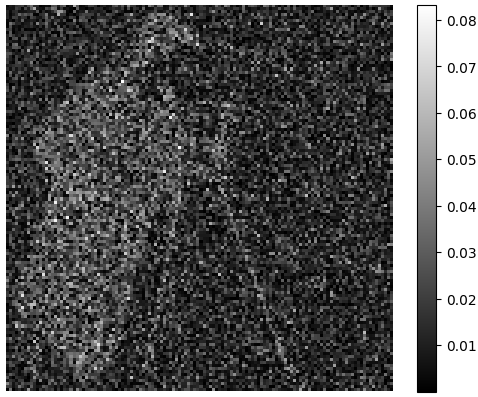}
\vspace{.15 cm}
\includegraphics[width=1.5 in]{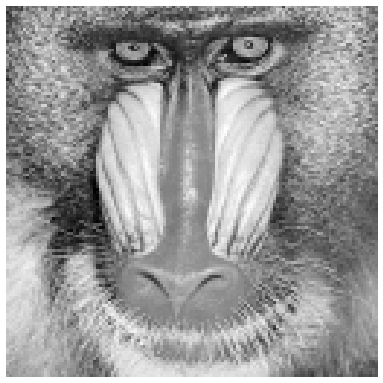}
\includegraphics[width=1.5 in]{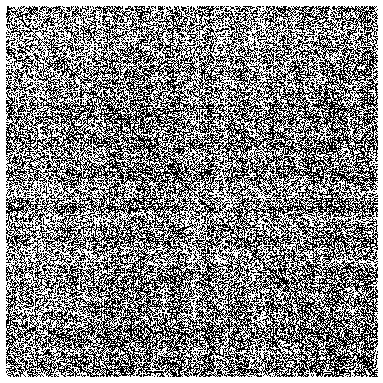}
\includegraphics[width=1.5 in]{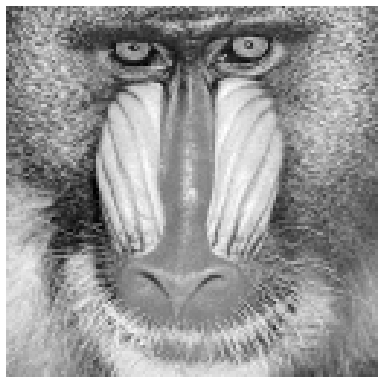}
\includegraphics[width=1.8 in]{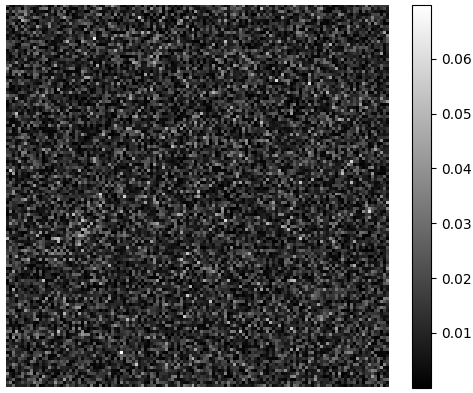}
\caption{\small{\textbf{Maximum entropy graph distributions for image quantization}. (Top row) From left to right:  $128 \times 128$ grayscale ``cameraman" image with mean zero and intensity values scaled to the interval $[-1, 1]$; two binary graph samples from a maximum entropy distribution using pixel intensity values as potentials (upper and lower triangular portions contain two adjacency matrices with white signifying edge presence); recovery of potentials from one graph sample using 10 iterations of the MLE inference algorithm from \cite{Chatterjee} (see, more generally, Theorem~\ref{Thm:MLEAlgFiniteDisc}); and absolute difference of recovery from original.  (Bottom row): Same as top row but for $128 \times 128$ ``baboon" image.}}
\label{fig_imgs_app_1}
\end{center}
\end{figure}

To summarize, biological constraints suggest models for information processing that are cooperative, stochastic, reliable, parallel, and cheap.
Thus, our motivating observation here is that coding with maximum entropy models on graphs -- in both discrete and continuous settings -- exhibits all of these properties.

Consider first the basic problem facing an organism of how to internally represent a continuous signal.  For instance, image capture in human retina occurs when rods (and color-sensitive cones in bright light) detect photons through the mechanical ``bending" of opsin molecules.  In particular, a $100\times100$ micrometer square patch of retina recruits 10,000 rods containing $10^{12}$ total opsins to detect a \textit{single photon} in starlight every 200 milliseconds.  In turn, the measurements taken inside these large photoreceptor arrays are converted into binary electrical pulses passed from retinal ganglion cells to the visual cortex through the optic nerve. An apparent continuum of stimuli are therefore converted by nervous tissue into sequences of spike times.

If we conceive of an action potential as a binary variable between pairs of connected neurons, the work of \cite{Chatterjee} suggests a strikingly simple stochastic coding scheme to solve this problem:  connect up a network with shared continuous values at each vertex and sample a simple graph in parallel (spatially) using biased ``coin flips" on edges.  For large networks, the original real values can be recovered with high fidelity from a measured degree sequence (with high probability); see Figure~\ref{fig_imgs_app_1}.  This common property of maximum entropy models that small numbers of samples determine them has been exploited before (e.g., for image processing \cite{geman1986markov}), but here we use this fact to stochastically, yet reliably, transform continuous into binary.

More precisely, let $\mathbf{x} = (x_1,\ldots,x_n)$ be a positive vector of parameters, which perhaps consists of $n$ continuous sensor measurements.
Provided such an $\mathbf{x} \in \mathbb R^n$, one can construct a distribution on binary graphs by assigning independent Bernoulli variables $A_{ij}$ on each edge $(i \neq j)$ with probability of presence given by $\frac{1}{x_i x_j + 1}$.  It is classical that given its expected degree sequence $\mathbf{d} = (d_1,\ldots,d_n)$:
\[ d_i = \sum_{j \neq i} \frac{1}{x_i x_j + 1}, \quad \text{ for } i = 1,\dots,n,\]
this distribution on graphs is the one with maximum entropy. 
In particular, when $\mathbf{x}$ is all $1$s, the corresponding maximum entropy graph distribution is Erd\H{o}s-R\'enyi given the expected degree constraints $d_i = (n - 1)/2$.
The situation is similar if integral values $r > 2$ on edges are allowed. % (see Section~\ref{Sec:Discussion} for an example with $r=3$).
Remarkably, a single such graph sample typically contains most of the information about the original parameters \cite{Chatterjee, rinaldo2013maximum}.  In fact, as we shall see, a maximum likelihood 
estimate (MLE) $\hat{\mathbf{x}}$ is very close to the original; moreover, it can be computed by solving the $n$ algebraic equations above for indeterminates $\mathbf{x}$ given the sample degree sequence $\mathbf{d}$.

To demonstrate these ideas visually, we present a toy example in Figure~\ref{fig_imgs_app_1} in which a $128 \times 128$ grayscale picture with pixel intensities scaled between -1 and 1 is 
transformed into binary.  In this case, each pixel value $\theta_i$ represents a vertex potential via $x_i = e^{\theta_i}$, and a binary graph sample is produced as described above.  To recover the 
original signal, rather than  solving the algebraic equations  directly, we use the fixed-point algorithm of \cite{Chatterjee} (Theorem~\ref{Thm:MLEAlgFiniteDisc} generalizes this to $r > 2$).  As
expected by theory, the recovery is a close match.

Taking this exploration further, suppose we do not fix \textit{a priori} bounds on the possible multiplicities of edges in a graph sample.  To again provide biological motivation, a common method of information transport in brains is via synaptic release of vesicles, each of which can contain thousands of neurotransmitter molecules (see Figure~\ref{fig_imgs_app_2_3}\textbf{a}).  Allowing any integer weights $k = 0, 1, 2, \ldots$ in this scenario, the maximum entropy distribution is again determined by potentials $\mathbf{x}$, but with the 
probability of a random edge value growing geometrically as $\mathbb P(A_{ij} = k) = \left(1 - \frac{1}{x_i x_j} \right)\frac{1}{(x_i x_j)^k}$.  We also have the following equations for expected degrees:
\[ d_i = \sum_{j \neq i} \frac{1}{x_i x_j - 1}, \quad \text{ for } i = 1,\dots,n.\]
As in the previous case, a single sample from this model can determine it with high precision (Theorem~\ref{Thm:ConsistencyInfiniteDisc}).

One of the hidden challenges in this work is determining when a given degree sequence can arise from a graph.  For simple graphs as in our first example, the corresponding characterization is called 
the \textit{Erd\H{o}s-Gallai criterion for graphical sequences} (see Theorem~\ref{Thm:GraphicalFiniteDisc}, which generalizes to all $r\geq2$). In the case of unbounded integer weights, we present the 
following result, which we have not been able to find in the literature.  

\begin{theorem}\label{Thm:GraphicalInfiniteDisc}
A sequence $(d_1, \dots, d_n)$ arises as the degrees from a graph with nonnegative integer weights if and only if $\sum_{i=1}^n d_i$ is even and: 
\begin{equation}\label{Eq:GraphicalInfiniteDisc}
\max_{1 \leq i \leq n} d_i \leq \frac{1}{2} \sum_{i=1}^n d_i.
\end{equation}
\end{theorem}

Our third and final maximum entropy scenario involves graph distributions with real-valued weights.  In neuroscience, this situation might apply to the case of gap junctions, which transmit continuous 
signals via direct ion exchange.  Another example might be the timing of action potentials, as a number of scientific articles have appeared suggesting that precise spike timing \cite{butts2007} and synchrony \cite{uhlhaas2009} 
are important for various computations in the brain.\footnote{We should note that it is well-known that precise spike timing is used for time-disparity computation in animals \cite{carr1993}, such as when owls track prey with binocular hearing or when electric fish use electric fields around their bodies for locating objects.}  
As we shall see in Section~\ref{Sec:Cont}, the analogous maximum entropy distribution is again parameterized by a real vector $\mathbf{x}$, and finding the maximum likelihood estimate for these parameters using a degree sequence $\mathbf{d}$ boils down to the following set of $n$ equations in the $n$ indeterminates $x_1,\ldots,x_n$:
\[d_i = \sum_{j \neq i} \frac{1}{x_i+x_j}, \quad \text{ for } i = 1,\dots,n.\]

Here, the edge weights $A_{ij}$ of the corresponding maximum entropy distribution end up being independent exponential random variables with mean $\frac{1}{x_i+x_j}$.
Intriguingly, this system of equations is also interesting from the perspective of (real) algebraic geometry and has been studied in a more general context using matroid theory 
\cite{SturmEntDisc} (see Remark~\ref{SturmfelsRemark} below for more details).  In particular, with tools from that discipline, it is possible to compute the exact real numbers representing the MLE (and their algebraic degrees), 
including when a fixed underlying connectivity graph is specified.  For instance, having a unique positive solution to the MLE can be verified using tools from algebraic geometry.
On the other hand, as far as we know, the geometric properties of the first two sets of equations above have not been studied.

\begin{figure}
	\begin{center}
\textbf{a)}\includegraphics[width=2.1 in]{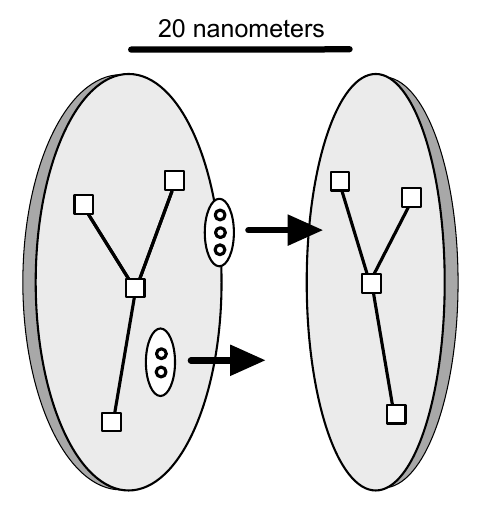}
\textbf{b)}\includegraphics[width=4.1 in]{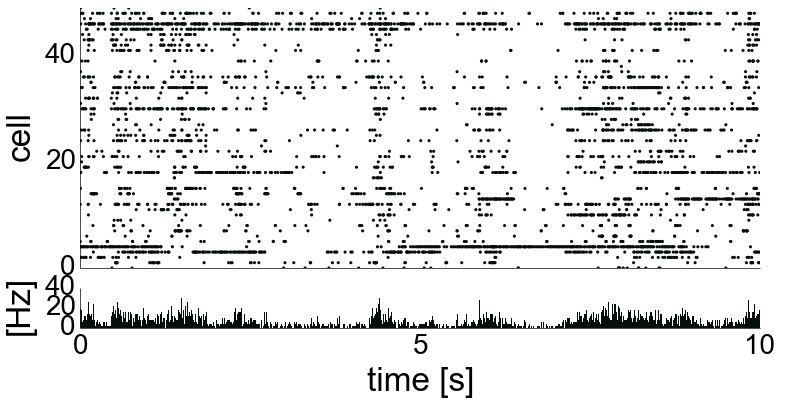}
\caption{\small{\textbf{Neural communication and timing}. \textbf{a}) Cartoon illustration of hypothetical vesicle release geometry at neural synapse. \textbf{b}) Spike timing has stochastic characteristics although reoccurring patterns in network dynamics can be uncovered using maximum entropy models applied to spontaneous \cite{hillar2015robust} and trial-based \cite{effenberger2015discovery} neural data.}}
\label{fig_imgs_app_2_3}
\end{center}
\end{figure}

We now make some brief remarks on our proofs of limiting consistency for the MLE.  The crucial object of interest is the map $\mathbf{x} \mapsto \textbf{d}$, and in our analysis of the previous two graph 
distributions, we require tight estimates of norms of inverses of certain Hessian matrices involving this map (see 
Sections~\ref{Sec:ProofConsistencyInfiniteDisc} and \ref{Sec:ProofConsistencyCont}, which use the matrix inequality from \cite{HLW} stated as Theorem \ref{Thm:Main} below).
Already combined with classical ``large deviation" results in bounded discrete variables (e.g., Bernstein's inequality \cite{bernstein1924}), such estimates 
suffice for the first model above.  For the remaining two models, we require a powerful generalization (Theorem~\ref{Thm:ConcIneqSubExp}) to unbounded random variables that has been 
fundamental for applications, such as compressed sensing \cite{vershynin}.

Of course, the universe never provides lunch  for free, and it must be emphasized that a single graph realization consists of many samples at edges, thereby explaining  statistical power.  Moreover, as a practical matter, the schemes discussed here are proofs-of-concept and not readily converted into engineering applications.  For instance, in the examples shown in Figure~\ref{fig_imgs_app_1}, although the original $8$-bit images have been projected into a binary space, its dimension is square the original and thus not efficient from a compression standpoint (although see Figure \ref{fig_imgs_app_sparse} for a sparse graph example).  On the other hand, these graph distributions do have some immediate practical implications for machine learning theory.  In particular, we have produced a model with no requirement for largest discrete sampled quantity.  We have not explored this connection, but there should be some low hanging fruit.  For more on further explorations and open questions, see Section~\ref{Sec:Discussion}.

Since the submission of this work, we have been made aware of progress by several other researchers on the models
appearing here.  In particular, Rinaldo, Petrovi{\'c}, and Fienberg \cite{rinaldo2013maximum} prove several results in the case of bounded integer weights (the ``generalized $\beta$-model"), as well as theorems for many other ensembles (e.g., the Rasch, Bradley-Terry, and $p_1$ models). For some additional asymptotic results concerning these maximum entropy distributions, see \cite{yan2015asymptotic, yan2016asymptotics};  recent extensions of the $\beta$-model in networks incorporating covariate information can be found in \cite{wahlstrom2017beta, yan2018statistical}.
Although this article was inspired by biology, we should mention that the econometrics community \cite{graham2017econometric} is also interested in how coarse measurements from agents can determine their shared latent properties reliably as network size grows. For a recent general overview of random networks (e.g., inhomogeneous random graphs \cite{bollobas2007phase}), we refer to the book \cite{van2016random}.

The organization of this paper is as follows.  In Section~\ref{Sec:General}, we lay out the general theory of maximum entropy distributions on weighted graphs. In Section~\ref{Sec:Specific}, we specialize the general theory to each of our three classes of weighted graphs, provide an explicit characterization of the maximum entropy distributions, and prove a generalization of the Erd\H{o}s-Gallai criterion for weighted graphical sequences. Furthermore, we also state a limiting consistency property of the MLE of the vertex parameters from one graph sample (Theorems~\ref{Thm:ConsistencyFiniteDisc}, \ref{Thm:ConsistencyInfiniteDisc}, and \ref{Thm:ConsistencyCont}) and present an efficient fixed-point algorithm in the bounded discrete case (Theorem~\ref{Thm:MLEAlgFiniteDisc}). Section~\ref{Sec:Proofs} then provides proofs of the main technical results presented in Section~\ref{Sec:Specific}. Finally, Section~\ref{Sec:Discussion} closes with a discussion that includes some future research directions.  As our audience contains mathematical biologists, we have tried to keep the mathematics relatively self-contained.

\paragraph{Notation.}
Let $\R_+ = (0, \infty)$, $\R_0 = [0,\infty)$, $\N = \{1,2,\dots\}$, and $\N_0 = \{0,1,2,\dots\}$. The natural logarithm is denoted by $\log$.  We write $\sum_{\{i,j\}}$ and $\prod_{\{i,j\}}$ for the summation and product, respectively, over all $\binom{n}{2}$ pairs $\{i,j\}$ with $i \neq j$. For a subset $C \subseteq \R^n$, $C^\circ$ and $\overline{C}$ denote the interior and closure of $C$ in $\R^n$, respectively. For a vector $\mathbf{x} = (x_1,\dots,x_n) \in \R^n$, $\|\mathbf{x}\|_1 = \sum_{i=1}^n |x_i|$ and $\|\mathbf{x}\|_\infty = \max_{1 \leq i \leq n} |x_i|$ denote the $\ell_1$ and $\ell_\infty$ norms of $\mathbf{x}$. For an $n \times n$ matrix $J = (J_{ij})$, the matrix norm $\|J\|_\infty$ induced by the $\|\cdot\|_\infty$-norm on vectors in $\R^n$ is:
\begin{equation*}
\|J\|_\infty = \max_{\mathbf{x} \neq 0} \frac{\|J \mathbf{x}\|_\infty}{\|\mathbf{x}\|_\infty} = \max_{1 \leq i \leq n} \sum_{j=1}^n |J_{ij}|.
\end{equation*}

%---------
\section{General theory via exponential family distributions}
\label{Sec:General}

In this section we develop the general machinery of maximum entropy distributions on graphs via the theory of exponential family distributions~\cite{brown1986, barndorff2014information}, and in subsequent sections we specialize our analysis to some particular cases of weighted graphs.  Our treatment of basic theory 
mainly follows \cite{Jordan}, which is geared toward practitioners of machine learning.

Consider an undirected graph $G$ on $n \geq 3$ vertices with edge $(i,j)$ having weight $a_{ij} \in S$, where $S \subseteq \R$ is the set of possible weight values. We consider the following specific cases: 1.) Finite discrete weighted graphs with edge weights in $S = \{0,1,\dots,r-1\}$, 2.) Infinite discrete weighted graphs with edge weights in $S = \N_0$, and 3.) Continuous weighted graphs with edge weights in $S = \R_0$.  A graph $G$ is fully specified by its \textit{adjacency matrix} $\mathbf{a} = (a_{ij})_{i,j=1}^n$, which is an $n \times n$ symmetric matrix with zeros along its diagonal. For fixed $n$, a probability distribution over graphs $G$ corresponds to a distribution over adjacency matrices $\mathbf{a} = (a_{ij}) \in S^{\binom{n}{2}}$. Given a graph with adjacency matrix $\mathbf{a} = (a_{ij})$, let $\deg_i(\mathbf{a}) = \sum_{j \neq i} a_{ij}$ be the degree of vertex $i$, and let $\deg(\mathbf{a}) = (\deg_1(\mathbf{a}), \dots, \deg_n(\mathbf{a}))$ be the degree sequence of $\mathbf{a}$.

%---------
\subsection{Characterization of maximum entropy distribution}

Let $\mathcal{S}$ be a $\sigma$-algebra over the set of weight values $S$, and assume there is a canonical $\sigma$-finite probability measure $\nu$ on $(S,\mathcal{S})$. Let $\nu^{\binom{n}{2}}$ be the product measure on $S^{\binom{n}{2}}$, and let $\mathfrak{P}$ be the set of all probability distributions on $S^{\binom{n}{2}}$ that are absolutely continuous with respect to $\nu^{\binom{n}{2}}$. Since $\nu^{\binom{n}{2}}$ is $\sigma$-finite, these probability distributions can be characterized by their density functions, i.e.\ the Radon-Nikodym derivatives with respect to $\nu^{\binom{n}{2}}$. Given a sequence $\mathbf{d} = (d_1, \dots, d_n) \in \R^n$, let $\mathfrak{P}_\mathbf{d}$ be the set of distributions in $\mathfrak{P}$ whose expected degree sequence is equal to $\mathbf{d}$:
\begin{equation*}
\mathfrak{P}_\mathbf{d} = \{ \P \in \mathfrak{P} \colon \E_\P[\deg(A)] = \mathbf{d}\},
\end{equation*}
where in the definition above, the random variable $A = (A_{ij}) \in S^{\binom{n}{2}}$ is drawn from the distribution $\P$. Then the distribution $\P^\ast$ in $\mathfrak{P}_\mathbf{d}$ with maximum entropy is precisely the exponential family distribution with the degree sequence as sufficient statistics~\cite[Chapter~3]{Jordan}. Specifically, the density of $\P^\ast$ at $\mathbf{a} = (a_{ij}) \in S^{\binom{n}{2}}$ is given by:\footnote{We choose to use $-\theta$ in the parameterization~\eqref{Eq:MaxEntDist}, instead of the canonical parameterization $p^\ast(\mathbf{a}) \propto \exp(\theta^\top \deg(\mathbf{a}))$, because it simplifies the notation in our later presentation.}
\begin{equation}\label{Eq:MaxEntDist}
p^\ast(\mathbf{a}) = \exp \big( -\theta^\top \deg(\mathbf{a}) - Z(\theta) \big),
\end{equation}
where $Z(\theta)$ is the \emph{log-partition function}:
\begin{equation*}
Z(\theta) = \log \int_{S^{\binom{n}{2}}} \exp\big( -\theta^\top \deg(\mathbf{a}) \big) \; \nu^{\binom{n}{2}}(d\mathbf{a}),
\end{equation*}
and $\theta = (\theta_1, \dots, \theta_n)$ is a parameter that belongs to the \emph{natural parameter space}:
\begin{equation*}
\Theta = \{\mathbf{\theta} \in \R^n \colon Z(\mathbf{\theta}) < \infty\}.
\end{equation*}
We will also write $\P^\ast_\theta$ if we need to emphasize the dependence of $\P^\ast$ on the parameter $\theta$.

Using the definition $\deg_i(\mathbf{a}) = \sum_{j \neq i} a_{ij}$, we can write:
\begin{equation*}
\exp \big( -\theta^\top \deg(\mathbf{a}) \big)
= \exp \Big( -\sum_{\{i,j\}} (\theta_i+\theta_j) a_{ij} \Big)
= \prod_{\{i,j\}} \exp \big(-(\theta_i+\theta_j) a_{ij} \big).
\end{equation*}
Hence, we can express the log-partition function as:
\begin{equation}\label{Eq:DecompositionLogPartition}
Z(\theta) = \log \prod_{\{i,j\}} \int_S \exp \big(-(\theta_i+\theta_j) a_{ij} \big) \; \nu(da_{ij}) = \sum_{\{i,j\}} Z_1(\theta_i+\theta_j),
\end{equation}
in which $Z_1(t)$ is the marginal log-partition function:
\begin{equation*}
Z_1(t) = \log \int_S \exp (-ta) \: \nu(da).
\end{equation*}
Consequently, the density in~\eqref{Eq:MaxEntDist} can be written as:
\begin{equation*}
p^\ast(\mathbf{a}) = \prod_{\{i,j\}} \exp \big( -(\theta_i + \theta_j) a_{ij} - Z_1(\theta_i + \theta_j) \big).
\end{equation*}
This means the edge weights $A_{ij}$ are independent random variables, with $A_{ij} \in S$ having distribution $\P_{ij}^\ast$ with density:
\begin{equation*}
p_{ij}^\ast(a) = \exp \big( -(\theta_i + \theta_j) a - Z_1(\theta_i + \theta_j) \big).
\end{equation*}
In particular, the edge weights $A_{ij}$ belong to the same exponential family distribution but with different parameters that depend on $\theta_i$ and $\theta_j$ (or rather, on their sum $\theta_i + \theta_j$). The parameters $\theta_1, \dots, \theta_n$ can be interpreted as the potential at each vertex that determines how strongly the vertices are connected to each other. Furthermore, we can write the natural parameter space $\Theta$ as:
\begin{equation*}
\Theta = \{\theta \in \R^n \colon Z_1(\theta_i+\theta_j) < \infty \; \text{ for all } \; i \neq j \}. 
\end{equation*}

%-------------
\subsection{Maximum likelihood estimator and moment-matching equation}

Using the characterization of $\P^\ast$ as the maximum entropy distribution in $\mathfrak{P}_\mathbf{d}$, the condition $\P^\ast \in \mathfrak{P}_\mathbf{d}$ means we need to choose the parameter $\theta$ for $\P^\ast_\theta$ such that $\E_\theta[\deg(A)] = \mathbf{d}$.\footnote{Here we write $\E_\theta$ in place of $\E_{\P^\ast}$ to emphasize the dependence of the expectation on the parameter $\theta$.} This is an instance of the moment-matching equation, which, in the case of exponential family distributions, is well-known to be equivalent to finding the maximum likelihood estimator (MLE) of $\theta$ given an empirical degree sequence $\mathbf{d} \in \R^n$.

Specifically, suppose we draw graph samples $G_1,\dots,G_m$ i.i.d.\ from the distribution $\P^\ast$ with parameter $\theta^\ast$, and we want to find the MLE $\hat \theta$ of $\theta^\ast$ based on the observations $G_1,\dots,G_m$. Using the parametric form of the density~\eqref{Eq:MaxEntDist}, this is equivalent to solving the following maximization problem:
\begin{equation*}
\max_{\theta \in \Theta} \: \mathcal{F}(\theta) \equiv -\theta^\top \mathbf{d} - Z(\theta),
\end{equation*}
where $\mathbf{d}$ is the average of the degree sequences of $G_1,\dots,G_m$. Setting the gradient of $\mathcal{F}(\theta)$ to zero reveals that the MLE $\hat \theta$ satisfies:
\begin{equation}\label{Eq:MLEEquation}
-\nabla Z(\hat \theta) = \mathbf{d}.
\end{equation}
Recall that the gradient of the log-partition function in an exponential family distribution is equal to the expected sufficient statistics. In our case, we have $-\nabla Z(\hat \theta) = \E_{\hat \theta}[\deg(A)]$, so the MLE equation~\eqref{Eq:MLEEquation} recovers the moment-matching equation $\E_{\hat \theta}[\deg(A)] = \mathbf{d}$.

In Section~\ref{Sec:Specific} we study the properties of the MLE of $\theta$ from a {\em single} sample $G \sim \P^\ast_\theta$. In the remainder of this section, we address the question of the existence and uniqueness of the MLE with a given empirical degree sequence $\mathbf{d}$.

Define the \emph{mean parameter space} $\M$ to be the set of expected degree sequences from all distributions on $S^{\binom{n}{2}}$ that are absolutely continuous with respect to $\nu^{\binom{n}{2}}$:
\begin{equation*}
\M = \{ \E_\P[\deg(A)] \colon \P \in \mathfrak{P} \}.
\end{equation*}
The set $\M$ is necessarily convex, since a convex combination of probability distributions in $\mathfrak{P}$ is also a probability distribution in $\mathfrak{P}$. Recall that an exponential family distribution is {\em minimal} if there is no linear combination of the sufficient statistics that is constant almost surely with respect to the base distribution. This minimality property clearly holds for $\P^\ast$, for which the sufficient statistics are the degree sequence. We say that $\P^\ast$ is \emph{regular} if the natural parameter space $\Theta$ is open. By the general theory of exponential family distributions~\cite[Theorem~3.3]{Jordan}, in a regular and minimal exponential family distribution, the gradient of the log-partition function maps the natural parameter space $\Theta$ to the interior of the mean parameter space $\M$, and this mapping:\footnote{The mapping is $-\nabla Z$, instead of $\nabla Z$, because of our choice of the parameterization in~\eqref{Eq:MaxEntDist} using $-\theta$.}
\begin{equation*}
-\nabla Z \colon \Theta \to \M^\circ
\end{equation*}
is bijective. We summarize the preceding discussion in the following result (see also \cite[Theorem 3.1]{rinaldo2013maximum} and surrounding remarks).

\begin{proposition}\label{Prop:RegularMinimal}
Assume $\Theta$ is open. Then there exists a solution $\theta \in \Theta$ to the MLE equation $\E_{\theta}[\deg(A)] = \mathbf{d}$ if and only if $\mathbf{d} \in \M^\circ$, and if such a solution exists then it is unique.
\end{proposition}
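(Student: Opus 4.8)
The plan is to deduce the proposition directly from the standard duality theory of exponential families; the only genuine work is to confirm that $\P^\ast$ meets the hypotheses of that theory (minimality, together with the assumed regularity) and then to transcribe its conclusion into the stated form.

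First I would verify that $\P^\ast$ is a \emph{minimal} exponential family. The sufficient statistic is the linear map $\mathbf{a} \mapsto \deg(\mathbf{a})$, and for $\lambda \in \R^n$ we have $\lambda^\top \deg(\mathbf{a}) = \sum_{\{i,j\}} (\lambda_i + \lambda_j)\, a_{ij}$. Since the coordinates $a_{ij}$ are independent under $\nu^{\binom{n}{2}}$ and the base measure $\nu$ is non-degenerate (its support has at least two points in each of the three cases of interest), this linear combination is $\nu^{\binom{n}{2}}$-almost surely constant only if $\lambda_i + \lambda_j = 0$ for every pair $i \neq j$; because $n \geq 3$, inspecting three distinct indices forces $\lambda = 0$. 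Hence no nontrivial linear combination of the degree statistics is almost surely constant, so $\P^\ast$ is minimal, and combined with the standing hypothesis that $\Theta$ is open it is a regular minimal exponential family (in particular $\M$ is full-dimensional).

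Next I would invoke the general theory of exponential families, e.g.\ \cite[Theorem~3.3]{Jordan}: for a regular minimal family the gradient map $-\nabla Z \colon \Theta \to \M^\circ$ is a bijection, and moreover $-\nabla Z(\theta) = \E_\theta[\deg(A)]$ for every $\theta \in \Theta$. The proposition then follows by reading off this bijection: a parameter $\theta \in \Theta$ solving $\E_\theta[\deg(A)] = \mathbf{d}$ exists exactly when $\mathbf{d}$ lies in the image of $-\nabla Z$, which is $\M^\circ$; and when it exists it is unique because $-\nabla Z$ is injective.

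If one instead wanted a self-contained argument, uniqueness is the easy half: the log-likelihood $\mathcal{F}(\theta) = -\theta^\top \mathbf{d} - Z(\theta)$ has Hessian $-\nabla^2 Z(\theta) = -\mathrm{Cov}_\theta(\deg(A))$, which is negative definite by minimality, so $\mathcal{F}$ is strictly concave and any stationary point is its unique global maximizer. The real content is then the equivalence ``$\mathcal{F}$ attains its maximum on the open set $\Theta$ $\iff$ $\mathbf{d} \in \M^\circ$'': one must show that for $\mathbf{d} \in \M^\circ$ the supremum is not merely approached as $\theta$ escapes toward $\partial\Theta$ or to infinity, while for $\mathbf{d} \in \partial\M$ (or outside $\overline{\M}$) no interior maximizer can exist. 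I expect this ``no escape to the boundary'' analysis to be the main obstacle in a from-scratch proof; since it is precisely what the cited exponential-family theorem packages, invoking that theorem is the efficient route and the one I would take.
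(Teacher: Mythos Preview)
Your proposal is correct and follows essentially the same route as the paper: the paper also asserts minimality (without the detailed verification you supply), invokes regularity from the hypothesis, and then cites \cite[Theorem~3.3]{Jordan} to conclude that $-\nabla Z \colon \Theta \to \M^\circ$ is a bijection, from which the proposition is read off. Your explicit argument for minimality using $n\geq 3$ is a useful addition, since the paper merely says this ``clearly holds.''
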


We now characterize the mean parameter space $\M$. We say that a sequence $\mathbf{d} = (d_1, \dots, d_n)$ is \emph{graphic} (or a {\em graphical sequence}) if $\mathbf{d}$ is the degree sequence of a graph $G$ with edge weights in $S$, and in this case we say that $G$ \emph{realizes} $\mathbf{d}$. It is important to note that whether a sequence $\mathbf{d}$ is graphic depends on the weight set $S$, which we consider fixed for now.

\begin{proposition}\label{Prop:MConvW}
Let $\W$ be the set of all graphical sequences, and let $\conv(\W)$ be the convex hull of $\W$. Then $\M \subseteq \conv(\W)$. Furthermore, if $\mathfrak{P}$ contains the Dirac delta measures, then $\M = \conv(\W)$.
\end{proposition}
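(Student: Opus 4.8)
The plan is to prove the two containments separately: $\M \subseteq \conv(\W)$ holds unconditionally, while $\conv(\W) \subseteq \M$ uses the Dirac hypothesis. For the first, fix $\P \in \mathfrak{P}$ and let $A = (A_{ij})$ be drawn from $\P$. Since $A$ takes values in $S^{\binom{n}{2}}$, every realization of $\deg(A)$ is the degree sequence of a weighted graph with edge weights in $S$; that is, $\deg(A) \in \W$ surely. The point $\E_\P[\deg(A)]$ is thus a probabilistic average of points of $\W$, and I would place it in $\conv(\W)$ by a supporting-hyperplane argument: for every $c \in \R^n$,
\[
c^\top \E_\P[\deg(A)] \;=\; \E_\P\big[\, c^\top \deg(A)\,\big] \;\le\; \sup_{\mathbf{d} \in \W} c^\top \mathbf{d},
\]
so $\E_\P[\deg(A)]$ lies in every closed half-space containing $\W$, hence in the closed convex hull $\overline{\conv(\W)}$. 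To upgrade this to $\conv(\W)$ it suffices to know that $\conv(\W)$ is closed, which holds for each weight set $S$ treated in Section~\ref{Sec:Specific}: for finite $S$ the set $\W$ is finite so $\conv(\W)$ is a polytope, and the Erd\H{o}s--Gallai-type inequalities established there exhibit $\conv(\W)$ as a polyhedron (a finite intersection of half-spaces) in the remaining two cases. One could alternatively exploit linearity of $\deg$ to write $\E_\P[\deg(A)] = \deg(\E_\P[A])$ and argue with barycenters of $S^{\binom{n}{2}}$, but the half-space computation above is the most direct route.

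For the reverse containment under the Dirac hypothesis, observe that for each $\mathbf{a} \in S^{\binom{n}{2}}$ the Dirac measure $\delta_{\mathbf{a}}$ lies in $\mathfrak{P}$ and has $\E_{\delta_{\mathbf{a}}}[\deg(A)] = \deg(\mathbf{a})$; hence $\W \subseteq \M$. Since $\M$ is convex, $\conv(\W) \subseteq \M$, and combined with the first containment (together with closedness of $\conv(\W)$) this gives $\M = \conv(\W)$. Note that this direction genuinely needs the hypothesis: Dirac measures are absolutely continuous with respect to $\nu^{\binom{n}{2}}$ precisely when $\nu$ has atoms, which is the case for the two discrete weight sets but not for $S = \R_0$.

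I do not expect a serious obstacle. The only content-bearing point is the first containment — that an expectation of a $\W$-valued random variable cannot escape $\conv(\W)$ — and this is standard convex analysis once one records that $\deg(A)$ lands in $\W$ by construction. The two mild things to be careful about are integrability, so that $\E_\P[\deg(A)]$ is a well-defined element of $\R^n$ (implicitly $\mathfrak{P}$ should be taken to consist of distributions with finite mean), and the closedness of $\conv(\W)$, which is cleanest to cite from the explicit polyhedral descriptions of $\W$ obtained in Section~\ref{Sec:Specific} rather than argued here at this level of generality.
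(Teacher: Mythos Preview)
Your approach is essentially the same as the paper's: both observe that $\deg(A)\in\W$ for every realization, so the expectation lands in the convex hull, and both handle the reverse inclusion identically via Dirac masses. The paper's proof is one sentence for each direction and simply asserts the first inclusion is ``clear.''

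The one substantive difference is that you are more careful than the paper about the closure issue: your half-space argument only yields $\E_\P[\deg(A)]\in\overline{\conv(\W)}$, and you then appeal forward to Section~\ref{Sec:Specific} for closedness of $\conv(\W)$. This is honest, but note that your justification is slightly imprecise in the infinite discrete case: Theorem~\ref{Thm:GraphicalInfiniteDisc} describes $\W$, not $\conv(\W)$, and Lemma~\ref{Lem:ConvWInfiniteDisc} only identifies $\overline{\conv(\W)}$ with the polyhedron $\W_1$. (It is in fact true that $\conv(\W)=\W_1$ there as well---the extreme rays of $\W_1$ are the integer vectors $e_i+e_j$, all of which lie in $\W$, and a scaling argument through $0\in\W$ finishes it---but this is not what the paper proves.) In any event, the distinction is immaterial for the paper's applications, since $\conv(\W)$ and its closure share the same interior, and only $\M^\circ$ is ever used downstream.
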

\begin{proof}
The inclusion $\M \subseteq \conv(\W)$ is clear, since any element of $\M$ is of the form $\E_\P[\deg(A)]$ for some distribution $\P$ and $\deg(A) \in \W$ for every realization of the random variable $A$. Now suppose $\mathfrak{P}$ contains the Dirac delta measures $\delta_B$ for each $B \in S^{\binom{n}{2}}$. Given $\mathbf{d} \in \W$, let $B$ be the adjacency matrix of the graph that realizes $\mathbf{d}$. Then $\mathbf{d} = \E_{\delta_B}[\deg(A)] \in \M$, which means $\W \subseteq \M$, and hence $\conv(\W) \subseteq \M$ since $\M$ is convex.
\end{proof}

As we shall see in Section~\ref{Sec:Specific}, the result above allows us to conclude that $\M = \conv(\W)$ for the case of discrete weighted graphs. On the other hand, for the case of continuous weighted graphs we need to prove $\M = \conv(\W)$ directly since $\mathfrak{P}$ in this case does not contain the Dirac measures.

\begin{remark}
We emphasize the distinction between a \emph{valid} solution $\theta \in \Theta$ and a \emph{general} solution $\theta \in \R^n$ to the MLE equation $\E_\theta[\deg(A)] = \mathbf{d}$. As we saw from Proposition~\ref{Prop:RegularMinimal}, we have a precise characterization of the existence and uniqueness of the valid solution $\theta \in \Theta$, but in general, there are multiple solutions $\theta$ to the MLE equation. In this paper we shall be concerned only with the valid solution; Sanyal, Sturmfels, and Vinzant study some algebraic properties of general solutions~\cite{SturmEntDisc}.
\end{remark}

We close this section by discussing the symmetry of the valid solution to the MLE equation. Recall the decomposition~\eqref{Eq:DecompositionLogPartition} of the log-partition function $Z(\theta)$ into the marginal log-partition functions $Z_1(\theta_i+\theta_j)$. Let $\Dom(Z_1) = \{t \in \R \colon Z_1(t) < \infty\}$, and let $\mu \colon \Dom(Z_1) \to \R$ denote the (marginal) \emph{mean function}:
\begin{equation*}
\mu(t) = \int_S a \: \exp \big(-ta - Z_1(t) \big) \: \nu(da).
\end{equation*}
Observing that we can write:
\begin{equation*}
\E_\theta[A_{ij}] = \int_S a \: \exp \big( -(\theta_i + \theta_j) a - Z_1(\theta_i + \theta_j) \big) \: \nu(da) = \mu(\theta_i+\theta_j),
\end{equation*}
the MLE equation $\E_\theta[\deg(A)] = \mathbf{d}$ then becomes:
\begin{equation}\label{Eq:MLE-Sym}
d_i = \sum_{j \neq i} \mu(\theta_i+\theta_j), \quad \text{ for } i = 1, \dots, n.
\end{equation}

In the statement below, $\sgn$ denotes the sign function: $\sgn(t) = t/|t|$ if $t \neq 0$, and $\sgn(0) = 0$.

\begin{proposition}\label{Prop:SymSoln}
Let $\mathbf{d} \in \M^\circ$, and let $\theta \in \Theta$ be the unique solution to the system of equations~\eqref{Eq:MLE-Sym}. If $\mu$ is strictly increasing, then:
\begin{equation*}
\sgn(d_i-d_j) = \sgn(\theta_i-\theta_j), \quad \text{ for all } i \neq j,
\end{equation*}
and similarly, if $\mu$ is strictly decreasing, then:
\begin{equation*}
\sgn(d_i-d_j) = \sgn(\theta_j-\theta_i), \quad \text{ for all } i \neq j.
\end{equation*}
\end{proposition}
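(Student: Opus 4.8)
The plan is to assume toward a contradiction that the claimed sign pattern fails for some pair, say WLOG (after relabeling) that $\mu$ is strictly increasing, $d_i \geq d_j$, but $\theta_i < \theta_j$. I would then try to derive a contradiction by comparing the two equations $d_i = \sum_{k \neq i} \mu(\theta_i + \theta_k)$ and $d_j = \sum_{k \neq j} \mu(\theta_j + \theta_k)$ term by term. Subtracting, the sums over $k \notin \{i,j\}$ pair up nicely: $d_i - d_j = \big(\mu(\theta_i + \theta_j) - \mu(\theta_j + \theta_i)\big) + \sum_{k \neq i,j}\big(\mu(\theta_i + \theta_k) - \mu(\theta_j + \theta_k)\big)$. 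The first parenthesized term vanishes, and since $\theta_i < \theta_j$ and $\mu$ is strictly increasing, every summand $\mu(\theta_i + \theta_k) - \mu(\theta_j + \theta_k)$ is strictly negative (here $n \geq 3$ guarantees at least one such $k$). Hence $d_i - d_j < 0$, contradicting $d_i \geq d_j$. This simultaneously handles the case $d_i = d_j$ (forcing $\theta_i = \theta_j$, i.e.\ $\sgn(d_i - d_j) = 0 \Rightarrow \sgn(\theta_i - \theta_j) = 0$) and the strict case.

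More carefully, I would organize the argument as a clean case analysis on $\sgn(\theta_i - \theta_j) \in \{-1, 0, +1\}$ rather than a proof by contradiction, since the term-by-term computation above directly shows that $\sgn(d_i - d_j) = \sgn(\theta_i - \theta_j)$: when $\theta_i < \theta_j$ each summand is $< 0$ so $d_i < d_j$; when $\theta_i = \theta_j$ each summand is $0$ so $d_i = d_j$; when $\theta_i > \theta_j$ each summand is $> 0$ so $d_i > d_j$. This establishes the implication $\sgn(\theta_i - \theta_j) = \sgn(d_i - d_j)$ in all three cases, which is equivalent to the stated claim. The strictly decreasing case is identical with the inequalities reversed, yielding $\sgn(d_i - d_j) = \sgn(\theta_j - \theta_i)$.

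The only subtlety — and it is mild — is making sure $\mu$ is well-defined and finite at all the relevant arguments. Since $\theta \in \Theta$, by the decomposition of $\Theta$ we have $Z_1(\theta_i + \theta_j) < \infty$ for all $i \neq j$, so $\theta_i + \theta_k \in \Dom(Z_1)$ whenever $i \neq k$, and $\mu(\theta_i + \theta_k)$ is finite; these are exactly the arguments appearing in the sums. (One does not need $\mu$ at $2\theta_i$.) There is no real obstacle here: the whole proposition is essentially a monotone-coupling argument, and the main "work" is just the bookkeeping of which terms cancel when subtracting the two moment-matching equations. I would present it in a few lines, noting the hypothesis $n \geq 3$ is what makes the sum over $k \neq i,j$ nonempty so that strict monotonicity of $\mu$ actually bites.
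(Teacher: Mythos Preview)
Your proposal is correct and follows essentially the same approach as the paper: both subtract the two moment-matching equations, cancel the common term $\mu(\theta_i+\theta_j)=\mu(\theta_j+\theta_i)$, and observe that each remaining summand $\mu(\theta_i+\theta_k)-\mu(\theta_j+\theta_k)$ has the same (respectively, opposite) sign as $\theta_i-\theta_j$ by strict monotonicity of $\mu$. Your additional remarks about $n\geq 3$ and well-definedness of $\mu$ at the relevant arguments are accurate elaborations that the paper leaves implicit.
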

\begin{proof}
Given $i \neq j$, we have:
\begin{equation*}
\begin{split}
d_i-d_j
&= \Big( \mu(\theta_i+\theta_j) + \sum_{k \neq i,j} \mu(\theta_i+\theta_k) \Big)
- \Big( \mu(\theta_j+\theta_i) + \sum_{k \neq i,j} \mu(\theta_j+\theta_k) \Big) \\
&= \sum_{k \neq i,j} \big( \mu(\theta_i+\theta_k) - \mu(\theta_j+\theta_k) \big).
\end{split}
\end{equation*}
If $\mu$ is strictly increasing, then $\mu(\theta_i+\theta_k)-\mu(\theta_j+\theta_k)$ has the same sign as $\theta_i-\theta_j$ for each $k \neq i,j$, and thus $d_i-d_j$ also has the same sign as $\theta_i-\theta_j$. Similarly, if $\mu$ is strictly decreasing, then $\mu(\theta_i+\theta_k)-\mu(\theta_j+\theta_k)$ has the opposite sign of $\theta_i-\theta_j$, and thus $d_i-d_j$ also has the opposite sign of $\theta_i-\theta_j$.
\end{proof}

%--------------------
\section{Analysis for specific edge weights}
\label{Sec:Specific}

In this section we analyze the maximum entropy random graph distributions for several specific choices of the weight set $S$. For each case, we specify the distribution of the edge weights $A_{ij}$, the mean function $\mu$, the natural parameter space $\Theta$, and characterize the mean parameter space $\M$. We also study the problem of finding the MLE $\hat \theta$ of $\theta$ from one graph sample $G \sim \P^\ast_\theta$ and prove the existence, uniqueness, and consistency of the MLE. Along the way, we derive analogues of the Erd\H{o}s-Gallai criterion of graphical sequences for weighted graphs. We defer the proofs of the results presented here to Section~\ref{Sec:Proofs}.

%------------
\subsection{Finite discrete weighted graphs}
\label{Sec:FiniteDiscrete}

We first study weighted graphs with edge weights in the finite discrete set $S = \{0,1,\dots,r-1\}$, where $r \geq 2$. The case $r = 2$ corresponds to unweighted graphs, and our analysis in this section overlaps with some of that in \cite{Chatterjee, rinaldo2013maximum}. Proofs are outlined in Section~\ref{Sec:ProofFiniteDisc}.

%------------
\subsubsection{Characterization of the distribution}

We take $\nu$ to be the counting measure on $S$. Following the development in Section~\ref{Sec:General}, the edge weights $A_{ij} \in S$ are independent random variables with density:
\begin{equation*}
p_{ij}^\ast(a) = \exp\big(-(\theta_i+\theta_j)a - Z_1(\theta_i+\theta_j)\big), \quad 0 \leq a \leq r-1,
\end{equation*}
where the marginal log-partition function $Z_1$ is given by:
\begin{equation*}
Z_1(t) = \log \sum_{a = 0}^{r-1} \exp(-at) =
\begin{cases}
\log \frac{1-\exp(-rt)}{1-\exp(-t)} \quad & \text{ if } t \neq 0,\\
\log r &\text{ if } t = 0.
\end{cases}
\end{equation*}

Since $Z_1(t) < \infty$ for all $t \in \R$, the natural parameter space $\Theta = \{\theta \in \R^n \colon Z_1(\theta_i+\theta_j) < \infty, i \neq j\}$ is given by $\Theta = \R^n$. The mean function is given by:
\begin{equation}\label{Eq:MeanFuncFiniteDiscrete}
\mu(t) = \sum_{a=0}^{r-1} a \exp(-at - Z_1(t))
= \frac{\sum_{a=0}^{r-1} a \: \exp(-at)}{\sum_{a=0}^{r-1} \exp(-at)}.
\end{equation}
At $t = 0$ the mean function takes the value:
\begin{equation*}
\mu(0) = \frac{\sum_{a=0}^{r-1} a}{r} = \frac{r-1}{2},
\end{equation*}
while for $t \neq 0$, the mean function simplifies to:
\begin{equation}\label{Eq:MeanFuncFiniteDiscreteAlt}
\mu(t)
= -\left(\frac{1-\exp(-t)}{1-\exp(-rt)}\right) \cdot \frac{d}{dt} \sum_{a=0}^{r-1} \exp(-at)
= \frac{1}{\exp(t)-1}-\frac{r}{\exp(rt)-1}.
\end{equation}
Figure~\ref{Fig:FiniteDiscMean} shows the behavior of the mean function $\mu(t)$ and its derivative $\mu'(t)$ as $r$ varies.

\begin{figure}[h]
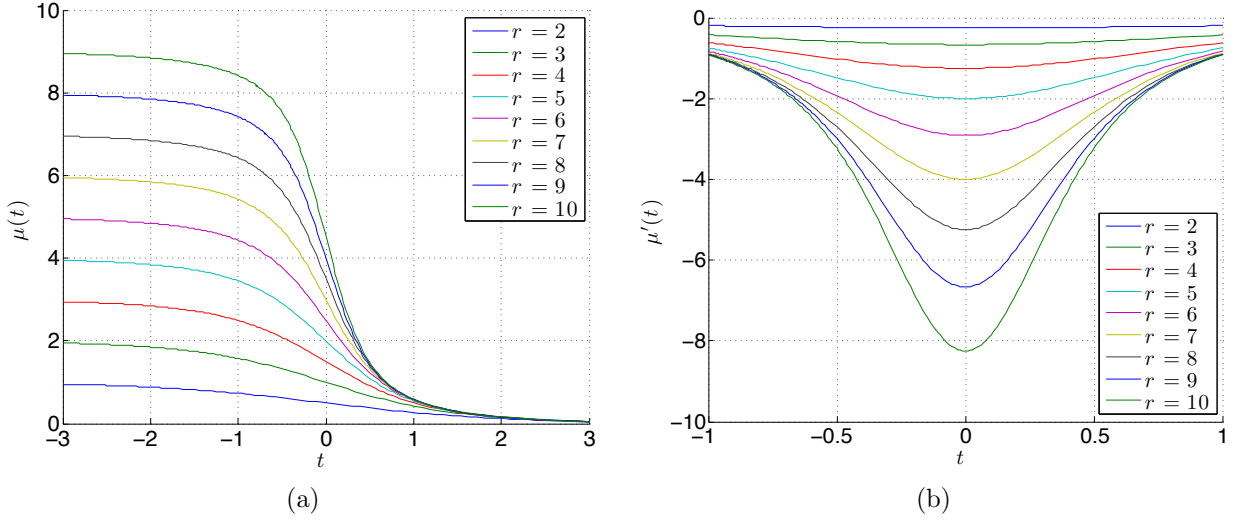

\begin{center}
\begin{tabular}{cc}
\widgraph{0.48\textwidth}{mu_plot}  &
\widgraph{0.48\textwidth}{muprime_plot} \\

(a) & (b)
\end{tabular}
\caption{Plot of the mean function $\mu(t)$ (left) and its derivative $\mu'(t)$ (right) as $r$ varies.}
\label{Fig:FiniteDiscMean}
\end{center}
\end{figure}

\begin{remark}
For $r = 2$, the edge weights $A_{ij}$ are independent Bernoulli random variables with:
\begin{equation*}
\P^\ast(A_{ij} = 1) = \mu(\theta_i+\theta_j) = \frac{\exp(-\theta_i-\theta_j)}{\exp(-\theta_i - \theta_j) + 1} = \frac{1}{\exp(\theta_i+\theta_j) + 1}.
\end{equation*}
As noted above, this is the model originally studied by Chatterjee, Diaconis, and Sly~\cite{Chatterjee} in the context of graph limits. When $\theta_1 = \theta_2 = \dots = \theta_n = t$, we recover the classical Erd\H{o}s-R\'enyi model with edge emission probability $p = 1/(1+\exp(2t))$.
\end{remark}

%-----------------
\subsubsection{Existence, uniqueness, and consistency of the MLE}

Consider the problem of finding the MLE of $\theta$ from one graph sample. Specifically, let $\theta \in \Theta$ and suppose we draw a sample $G \sim \P^\ast_\theta$. Then, as we saw in Section~\ref{Sec:General}, the MLE $\hat \theta$ of $\theta$ is a solution to the moment-matching equation $\E_{\hat \theta}[\deg(A)] = \mathbf{d}$, where $\mathbf{d}$ is the degree sequence of the sample graph $G$. As in~\eqref{Eq:MLE-Sym}, the moment-matching equation is equivalent to the following system:
\begin{equation}\label{Eq:MLEEqFiniteDisc}
d_i = \sum_{j \neq i} \mu(\hat \theta_i + \hat \theta_j), \quad i = 1,\dots,n.
\end{equation}

\begin{remark}
By setting $x_i = e^{\hat \theta_i}$ in~\eqref{Eq:MLEEqFiniteDisc}, we recover the expected degree equations from the introduction.
\end{remark}

Since the natural parameter space $\Theta = \R^n$ is open, Proposition~\ref{Prop:RegularMinimal} implies that the MLE $\hat \theta$ exists and is unique if and only if the empirical degree sequence $\mathbf{d}$ belongs to the interior $\M^\circ$ of the mean parameter space $\M$.

We also note that since $\nu^{\binom{n}{2}}$ is the counting measure on $S^{\binom{n}{2}}$, all  distributions on $S^{\binom{n}{2}}$ are absolutely continuous with respect to $\nu^{\binom{n}{2}}$, so $\mathfrak{P}$ contains all probability distributions on $S^{\binom{n}{2}}$. In particular, $\mathfrak{P}$ contains the Dirac measures, and by Proposition~\ref{Prop:MConvW}, this implies $\M = \conv(\W)$, where $\W$ is the set of all graphical sequences.

The following result characterizes when $\mathbf{d}$ is a degree sequence of a weighted graph with edge weights in $S$; we also refer to such $\mathbf{d}$ as a {\em (finite discrete) graphical sequence}. The case $r = 2$ recovers the classical Erd\H{o}s-Gallai criterion~\cite{ErdosGallai}.

\begin{theorem}\label{Thm:GraphicalFiniteDisc}
A sequence $(d_1,\dots,d_n) \in \N_0^n$ with $d_1 \geq d_2 \geq \dots \geq d_n$ is the degree sequence of a graph $G$ with edge weights in the set $S = \{0,1,\dots,r-1\}$, if and only if $\sum_{i=1}^n d_i$ is even and:
\begin{equation}\label{Eq:GraphicalFiniteDisc}
\sum_{i=1}^k d_i \leq (r-1)k(k-1) + \sum_{j=k+1}^n \min\{d_j, (r-1)k\}, \quad \text{ for } k = 1,\dots,n.
\end{equation}
\end{theorem}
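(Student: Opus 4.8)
The plan is to adapt the standard proof of the Erd\H{o}s-Gallai theorem (the $r=2$ case) to weighted degrees, where each edge may now carry up to $r-1$ "units" of weight. The necessity direction should be straightforward: assume $\mathbf{d}$ is realized by a graph $G$ with adjacency matrix $\mathbf{a} = (a_{ij})$, $a_{ij} \in \{0,1,\dots,r-1\}$. Then $\sum_i d_i = 2 \sum_{\{i,j\}} a_{ij}$ is automatically even. For the inequality, fix $k$ and split the sum $\sum_{i=1}^k d_i = \sum_{i=1}^k \sum_{j \neq i} a_{ij}$ into the contribution from pairs $\{i,j\}$ with $i,j \le k$ and the contribution from pairs with $i \le k < j$. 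The first part counts each such pair twice, with weight at most $r-1$, giving at most $(r-1)k(k-1)$. For the second part, for each $j > k$ the total weight $\sum_{i \le k} a_{ij}$ contributing to vertices $1,\dots,k$ is at most $(r-1)k$ (there are $k$ edges each of weight $\le r-1$), and it is also at most $\sum_{i \neq j} a_{ij} = d_j$; hence it is at most $\min\{d_j, (r-1)k\}$. Summing over $j > k$ yields~\eqref{Eq:GraphicalFiniteDisc}.

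For sufficiency I would use an exchange/transportation argument, the natural generalization of the classical one. Given $\mathbf{d}$ with $\sum_i d_i$ even and satisfying~\eqref{Eq:GraphicalFiniteDisc}, I would attempt to build a realizing $\mathbf{a}$ greedily and show that any obstruction contradicts one of the inequalities. One clean route: reduce to the binary case by a "blow-up". Replace each vertex $i$ by a cluster, or more directly, observe that a graph with weights in $\{0,\dots,r-1\}$ on $n$ vertices corresponds to choosing, for the pair $\{i,j\}$, an integer in $[0,r-1]$; this is the degree-constrained subgraph (or $b$-matching / $f$-factor) problem on the complete graph $K_n$ with upper capacity $r-1$ on every edge and prescribed vertex degrees $d_i$ (and no lower bounds). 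Then I would invoke the classical characterization of when such an $f$-factor exists — essentially a Gale–Ryser / Tutte–Berge style condition — and check that it is equivalent to~\eqref{Eq:GraphicalFiniteDisc}. Alternatively, and probably cleaner for a self-contained write-up, I would argue directly by induction on $n$ (or on $\sum_i d_i$): take vertex $n$ with the smallest degree $d_n$, distribute its $d_n$ units of weight among the vertices of largest residual degree (filling each edge up to capacity $r-1$ before moving to the next), show the residual sequence on $n-1$ vertices still satisfies~\eqref{Eq:GraphicalFiniteDisc}, and handle the usual subtlety that the residual sequence may need re-sorting. A swapping lemma — if $a_{ij}$ is not saturated but a "better" edge exists, one can shift a unit of weight without changing the degree sequence — lets one always reduce to the greedy configuration.

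The main obstacle I anticipate is the sufficiency direction, specifically verifying that the inequalities~\eqref{Eq:GraphicalFiniteDisc} are preserved under the reduction step. In the binary Erd\H{o}s-Gallai proof this bookkeeping is already the delicate part (one must track how removing a vertex and its neighbors shifts the partial sums, and re-sorting can move the "critical" index $k$), and the presence of the factor $r-1$ and the saturation capacities makes the case analysis heavier — in particular the $\min\{d_j,(r-1)k\}$ term behaves differently depending on whether the residual degrees at the large-degree end are still $\ge (r-1)k$ or have dropped below it. I would isolate this as a lemma: if $\mathbf{d}$ satisfies~\eqref{Eq:GraphicalFiniteDisc} and $d_n \ge 1$, then the sequence obtained by greedily decrementing $d_n$ against the top residual degrees, re-sorted, again satisfies~\eqref{Eq:GraphicalFiniteDisc} (with $n$ replaced by $n-1$). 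Once that lemma is in hand, induction closes the argument, the base case $n$ small being a direct check. A secondary, more routine point is confirming that the parity condition is exactly what is needed at the end of the induction (when all residual degrees are $0$).
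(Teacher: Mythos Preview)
Your necessity argument matches the paper's exactly. For sufficiency, however, the paper takes a different inductive route: following Choudum, it inducts on the total sum $s=\sum_i d_i$ rather than on $n$. At each step it locates the smallest index $t$ with $d_t>d_{t+1}$ (setting $t=n-1$ if all $d_i$ are equal), forms $\mathbf{d}'$ by decrementing $d_t$ and $d_n$ each by $1$, verifies through a five-case analysis that $\mathbf{d}'$ still satisfies~\eqref{Eq:GraphicalFiniteDisc}, and then---after realizing $\mathbf{d}'$ by induction---adds back a single unit on the edge $\{t,n\}$ (or reroutes via a short alternating path if that edge is already at capacity $r-1$). Your proposal instead removes the lowest-degree vertex entirely, distributing its $d_n$ units greedily against the top residual degrees and passing to an $(n-1)$-vertex sequence; this is the Hakimi-style reduction rather than the Choudum one. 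Both strategies are viable, and the obstacle you correctly flag---controlling how the $\min\{d_j,(r-1)k\}$ terms shift under the reduction---is exactly where the work lies in either version. The paper's decrement-by-two step keeps $n$ fixed and changes only two coordinates by $1$, which makes the case analysis for~\eqref{Eq:GraphicalFiniteDisc} somewhat more localized; your vertex-removal step is conceptually cleaner but requires tracking simultaneous decrements across several large-degree coordinates plus a drop in $n$. Your alternative suggestion of invoking a capacitated $f$-factor theorem would also work and is arguably the shortest path, at the cost of importing a nontrivial external result.
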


Although the result above provides a precise characterization of the set of graphical sequences $\W$, it is not immediately clear how to characterize the convex hull $\conv(\W)$, or how to decide whether a given $\mathbf{d}$ belongs to $\M^\circ = \conv(\W)^\circ$. Fortunately, in practice we can circumvent this issue by employing the following algorithm to compute the MLE. The case $r = 2$ recovers the fixed-point algorithm proposed by Chatterjee et al.~\cite{Chatterjee} in the case of unweighted graphs.  For specific guarantees of the existence of the MLE, see \cite[Theorem~5.1]{rinaldo2013maximum}.

\begin{theorem}\label{Thm:MLEAlgFiniteDisc}
Given $\mathbf{d} = (d_1, \dots, d_n) \in \R_+^n$, define the function $\varphi \colon \R^n \to \R^n$ by $\varphi(\mathbf{x}) = (\varphi_1(\mathbf{x}), \dots, \varphi_n(\mathbf{x}))$, where
\begin{equation}\label{Eq:VarphiFiniteDisc}
\varphi_i(\mathbf{x}) = x_i + \frac{1}{r-1} \left( \log \sum_{j \neq i} \mu(x_i+x_j) -  \log d_i\right).
\end{equation}
Starting from any $\theta^{(0)} \in \R^n$, define
\begin{equation}\label{Eq:MLEAlgFiniteDisc}
\theta^{(k+1)} = \varphi(\theta^{(k)}), \quad k \in \N_0.
\end{equation}
Suppose $\mathbf{d} \in \conv(\W)^\circ$, so the MLE equation~\eqref{Eq:MLEEqFiniteDisc} has a unique solution $\hat \theta$. Then $\hat \theta$ is a fixed-point of the function $\varphi$, and the iterates~\eqref{Eq:MLEAlgFiniteDisc} converge to $\hat \theta$ geometrically fast: there exists a constant $\beta \in (0,1)$ that only depends on $(\|\hat \theta\|_\infty, \|\theta^{(0)}\|_\infty)$, such that:
\begin{equation}\label{Eq:MLERateOfConvFiniteDisc}
\|\theta^{(k)}-\hat\theta\|_\infty \leq \beta^{k-1} \: \|\theta^{(0)}-\hat\theta\|_\infty, \quad k \in \N_0.
\end{equation}
Conversely, if $\mathbf{d} \notin \conv(\W)^\circ$, then the sequence $\{\theta^{(k)}\}$ has a divergent subsequence.
\end{theorem}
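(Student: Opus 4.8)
The plan is to show that $\varphi\colon\R^n\to\R^n$ is a self-map all of whose iterates stay in a bounded box, that $\hat\theta$ is its unique fixed point there, and that although $\varphi$ is only \emph{non-expansive} in $\|\cdot\|_\infty$, two consecutive applications of it strictly contract. The fixed-point claim is immediate: when $\mathbf d\in\conv(\W)^\circ=\M^\circ$, Proposition~\ref{Prop:RegularMinimal} gives a unique $\hat\theta\in\Theta=\R^n$ with $\sum_{j\neq i}\mu(\hat\theta_i+\hat\theta_j)=d_i$ for all $i$, and since each $d_i>0$ this says exactly $\log\sum_{j\neq i}\mu(\hat\theta_i+\hat\theta_j)=\log d_i$, i.e.\ $\varphi_i(\hat\theta)=\hat\theta_i$. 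The structural input is the Jacobian of $\varphi$. Let $A_t$ denote a random variable on $\{0,\dots,r-1\}$ with $\P(A_t=a)\propto e^{-at}$; then $\mu(t)=\E[A_t]$ and $\mu'(t)=-\mathrm{Var}(A_t)$, so $0<-\mu'(t)=\mathrm{Var}(A_t)\le\E[A_t^2]\le(r-1)\E[A_t]=(r-1)\mu(t)$, and the first inequality is \emph{strict} for every finite $t$ (more precisely $-\mu'(t)\le(r-1)\mu(t)-\mu(t)^2$). Differentiating~\eqref{Eq:VarphiFiniteDisc}, row $i$ of $D\varphi(\mathbf z)$ has off-diagonal entries $\mu'(z_i+z_j)/((r-1)S_i)<0$ and diagonal entry $1-P_i/((r-1)S_i)$, where $S_i=\sum_{j\neq i}\mu(z_i+z_j)>0$ and $P_i=-\sum_{j\neq i}\mu'(z_i+z_j)$; the bound $-\mu'\le(r-1)\mu$ makes the diagonal entry $\ge0$, and then the $\ell_1$-norm of every row of $D\varphi(\mathbf z)$ equals $1$ identically. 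Writing $\theta^{(k+1)}-\hat\theta=C^{(k)}(\theta^{(k)}-\hat\theta)$ by the mean value theorem, with $C^{(k)}$ the average of $D\varphi$ along the segment $[\hat\theta,\theta^{(k)}]$, the rows of $|C^{(k)}|$ again have $\ell_1$-norm $1$, so $\|\theta^{(k)}-\hat\theta\|_\infty$ is non-increasing and every iterate lies in $[-M,M]^n$ with $M:=2\|\hat\theta\|_\infty+\|\theta^{(0)}\|_\infty$.

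The heart of the proof is a two-step contraction on this box. For $|t|\le2M$ one has uniform bounds $\mathrm{Var}(A_t)\ge v(M)>0$ (continuity and positivity of $-\mu'$ on a compact interval) and $\mu(2M)\le\mu(t)\le\mu(-2M)<r-1$; hence on $[-M,M]^n$ every $C^{(k)}$ has diagonal entries $\ge\epsilon:=\mu(2M)/(r-1)>0$ (since $-\mu'(t)/((r-1)\mu(t))\le1-\mu(t)/(r-1)\le1-\epsilon$), off-diagonal entries $\le-\gamma$ with $\gamma:=v(M)/((r-1)^2(n-1))>0$, and rows of $\ell_1$-norm $1$. Consider a product $E=C'C$ of two such matrices. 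On the diagonal, $E_{ii}=\sum_j c'_{ij}c_{ji}$ is a sum of positive terms ($j=i$ gives $c'_{ii}c_{ii}>0$, $j\neq i$ gives a product of two negatives), so there is no cancellation. Off the diagonal ($i\neq l$), $E_{il}=\sum_j c'_{ij}c_{jl}$ splits into a strictly positive part $P^+_{il}:=\sum_{j\notin\{i,l\}}|c'_{ij}||c_{jl}|\ge(n-2)\gamma^2$ (recall $n\ge3$) and a strictly negative part of absolute value $P^-_{il}:=c'_{ii}|c_{il}|+|c'_{il}|c_{ll}\ge2\epsilon\gamma$, so $|E_{il}|=P^+_{il}+P^-_{il}-2\min(P^+_{il},P^-_{il})$. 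Summing over $l$ and using that the relevant rows have $\ell_1$-norm $1$ gives $\sum_l|E_{il}|=1-2\sum_{l\neq i}\min(P^+_{il},P^-_{il})\le1-2(n-1)\eta$ with $\eta:=\min\{(n-2)\gamma^2,\,2\epsilon\gamma\}>0$, so $\|E\|_\infty\le\beta_0$ for some $\beta_0=\beta_0(M)\in[0,1)$. Applying this to $E=C^{(k+1)}C^{(k)}$ yields $\|\theta^{(k+2)}-\hat\theta\|_\infty\le\beta_0\|\theta^{(k)}-\hat\theta\|_\infty$, and with the one-step non-expansion this gives $\|\theta^{(k)}-\hat\theta\|_\infty\le\beta_0^{\lfloor k/2\rfloor}\|\theta^{(0)}-\hat\theta\|_\infty\le\beta^{k-1}\|\theta^{(0)}-\hat\theta\|_\infty$ with $\beta:=\max\{\sqrt{\beta_0},\tfrac12\}\in(0,1)$; all constants depend only on $M$, hence on $(\|\hat\theta\|_\infty,\|\theta^{(0)}\|_\infty)$ ($n,r$ being fixed).

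For the converse I would argue by contradiction. If $\{\theta^{(k)}\}$ is bounded, say inside $[-M',M']^n$, the same two-step estimate applied along the segments between consecutive iterates (all in the box) gives $\|\theta^{(k+2)}-\theta^{(k)}\|_\infty\le\beta_0(M')\|\theta^{(k)}-\theta^{(k-2)}\|_\infty$, so $\{\theta^{(2m)}\}$ and $\{\theta^{(2m+1)}\}$ are each Cauchy; let $\theta^{\mathrm{ev}},\theta^{\mathrm{od}}$ be their limits, so $\varphi(\theta^{\mathrm{ev}})=\theta^{\mathrm{od}}$ and $\varphi(\theta^{\mathrm{od}})=\theta^{\mathrm{ev}}$. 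Both are fixed points of $\varphi\circ\varphi$ in $[-M',M']^n$, on which $\varphi\circ\varphi$ is a $\beta_0(M')$-contraction (the two segments needed for the two-step mean value estimate, $[\theta^{\mathrm{od}},\theta^{\mathrm{ev}}]$ and $[\varphi(\theta^{\mathrm{od}}),\varphi(\theta^{\mathrm{ev}})]=[\theta^{\mathrm{ev}},\theta^{\mathrm{od}}]$, lie in the box), so $\theta^{\mathrm{ev}}=\theta^{\mathrm{od}}=:\theta^\ast$ and $\varphi(\theta^\ast)=\theta^\ast$. But then $\theta^\ast\in\Theta=\R^n$ satisfies the moment-matching equation $\E_{\theta^\ast}[\deg(A)]=\mathbf d$, so $\mathbf d=-\nabla Z(\theta^\ast)$ lies in the image $\M^\circ=\conv(\W)^\circ$ of $-\nabla Z$, contradicting $\mathbf d\notin\conv(\W)^\circ$. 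Hence $\{\theta^{(k)}\}$ is unbounded, and therefore has a subsequence whose norm tends to $\infty$.

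The step I expect to be the main obstacle is the two-step contraction estimate. A single step has $\|\cdot\|_\infty$-operator norm exactly $1$, so one cannot avoid using both the uniform sign pattern of $D\varphi$ and the \emph{quantitative} strictness of the variance inequality $-\mu'<(r-1)\mu$, which is what keeps the diagonal of $D\varphi$ bounded away from $0$ on compact sets, and then carefully tracking the cancellations in a product of two coefficient matrices. Making the constants $v(M),\gamma,\epsilon$ explicit in $M,n,r$ is routine but is exactly what makes $\beta$ depend only on $(\|\hat\theta\|_\infty,\|\theta^{(0)}\|_\infty)$. I would isolate as a stand-alone lemma the matrix fact that the product of two real matrices with nonnegative diagonal bounded below, strictly negative off-diagonal entries bounded above, and unit row $\ell_1$-norm has $\|\cdot\|_\infty$-norm bounded away from $1$, in keeping with this paper's emphasis on diagonally dominant positive matrices.
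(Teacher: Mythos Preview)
Your proposal is correct and follows essentially the same route as the paper: compute the Jacobian of $\varphi$, observe that its rows have $\ell_1$-norm exactly $1$ with nonnegative diagonal and strictly negative off-diagonal entries (so $\varphi$ is non-expansive), and then exploit this sign pattern to show that a product of two such averaged Jacobians has $\|\cdot\|_\infty<1$, yielding a two-step contraction. The paper packages the matrix-product step as a cited lemma (the class $\mathcal L_n(\delta)$ from Chatterjee--Diaconis--Sly, Lemma~\ref{Lem:LnDelta}) rather than computing the cancellations by hand, derives the bound $-\mu'/\mu<r-1$ algebraically (Lemma~\ref{Lem:LowerBoundRatio}) rather than via your variance interpretation, and for the converse shows directly that $\|\theta^{(k+3)}-\theta^{(k+2)}\|_\infty\le(1-\delta^2)\|\theta^{(k+1)}-\theta^{(k)}\|_\infty$ so that the whole sequence is Cauchy, which is a bit cleaner than your even/odd-subsequence argument; but none of these are structural differences.
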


\begin{figure}[h]
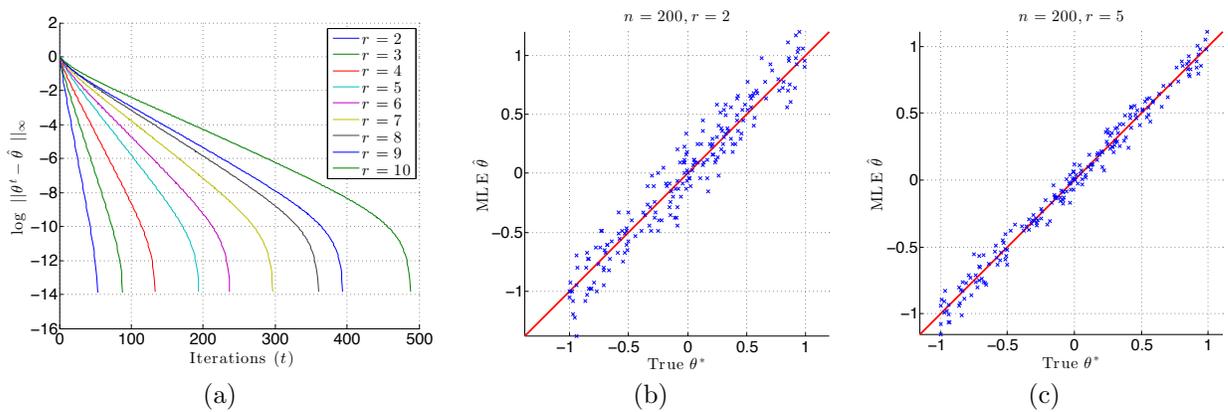

\begin{center}
\begin{tabular}{ccc}
\widgraph{0.35\textwidth}{alg_conv}  &
\widgraph{0.29\textwidth}{scatter_r2} &
\widgraph{0.29\textwidth}{scatter_r5} \\

(a) & (b) & (c)
\end{tabular}
\caption{(a) Plot of $\log \|\theta^{(t)} - \hat\theta\|_\infty$ for various values of $r$, where $\hat\theta$ is the final value of $\theta^{(t)}$ when the algorithm converges; (b) Scatter plot of the estimate $\hat \theta$ vs.\ the true parameter $\theta$ for $r = 2$; (c) Scatter plot for $r = 5$.}
\label{Fig:FiniteDisc}
\end{center}
\end{figure}

Figure~\ref{Fig:FiniteDisc} demonstrates the performance of the algorithm presented above. We set $n = 200$ and sample $\theta \in [-1,1]^n$ uniformly at random. Then for each $2 \leq r \leq 10$, we sample a graph from the distribution $\P^\ast_\theta$, compute the empirical degree sequence $\mathbf{d}$, and run the fixed-point algorithm starting with $\theta^{(0)} = \mathbf{0}$ until convergence. The left panel (Figure~\ref{Fig:FiniteDisc}(a)) shows the rate of convergence (on a logarithmic scale) of the algorithm for various values of $r$. We observe that the iterates $\{\theta^{(t)}\}$ indeed converge geometrically fast to the MLE $\hat \theta$, but the rate of convergence decreases as $r$ increases. By examining the proof of Theorem~\ref{Thm:MLEAlgFiniteDisc} in Section~\ref{Sec:ProofFiniteDisc}, we see that the term $\beta$ has the following expression:
\begin{equation*}
\beta^2 = 1-\frac{1}{(r-1)^2} \: \left(\min \left\{\frac{\exp(2K)-1}{\exp(2rK)-1}, \; -\frac{\mu'(2K)}{\mu(-2K)} \right\}\right)^2,
\end{equation*}
where $K = 2\|\hat\theta\|_\infty + \|\theta^{(0)}\|_\infty$. This shows that $\beta$ is an increasing function of $r$, explaining the empirical decrease in the rate of convergence as $r$ increases.

Figures~\ref{Fig:FiniteDisc}(b) and (c) show the plots of the estimate $\hat \theta$ versus the true $\theta$ (see also Figure~\ref{fig_imgs_app_1}). Notice that the points lie close to the diagonal line, suggesting that the MLE $\hat \theta$ is very close to the true parameter $\theta$. Indeed, the following result shows that $\hat \theta$ is a ``consistent estimator" of $\theta$ as $n \to \infty$ (see \cite{Chatterjee, rinaldo2013maximum} for proof details).

\begin{theorem}[Bounded discrete weights \cite{Chatterjee, rinaldo2013maximum}]\label{Thm:ConsistencyFiniteDisc}
Let $M > 0$ and $k > 1$ be fixed. Given $\theta \in \R^n$ with $\|\theta\|_\infty \leq M$, consider the problem of finding the MLE $\hat \theta$ of $\theta$ based on one graph sample $G \sim \P^\ast_\theta$. For sufficiently large $n$, with probability at least $1-2n^{-(k-1)}$, the MLE $\hat \theta$ exists and satisfies:
\begin{equation*}
\|\hat \theta - \theta\|_\infty \leq C \sqrt{\frac{k \log n}{n}},
\end{equation*}
where $C$ is a constant that only depends on $M$.
\end{theorem}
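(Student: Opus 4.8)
The plan is to run a quantitative inverse-function (contraction-mapping) argument anchored at the true parameter $\theta$, powered by a concentration estimate for the observed degree sequence and---most importantly---by a sharp bound on the $\|\cdot\|_\infty$ matrix norm of the inverse of the Jacobian of the moment map. Write $F(\vartheta) = \big(\sum_{j\ne i}\mu(\vartheta_i+\vartheta_j)\big)_{i=1}^n$, so that by~\eqref{Eq:MLEEqFiniteDisc} the MLE is characterized by $F(\hat\theta) = \mathbf d$, and a direct computation gives $DF(\vartheta) = -Q(\vartheta)$, where $Q(\vartheta)$ is the symmetric matrix with $Q(\vartheta)_{ij} = -\mu'(\vartheta_i+\vartheta_j)$ for $i \ne j$ and $Q(\vartheta)_{ii} = \sum_{j\ne i} Q(\vartheta)_{ij}$ (equivalently $Q(\vartheta) = \nabla^2 Z(\vartheta) = \mathrm{Cov}_\vartheta(\deg(A))$). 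The relation $Q(\vartheta)_{ii} = \sum_{j\ne i}Q(\vartheta)_{ij}$ yields the identity $x^\top Q(\vartheta)x = \sum_{i<j} Q(\vartheta)_{ij}(x_i+x_j)^2 \ge 0$; moreover $\mu$ is real-analytic with $\mu' < 0$ everywhere (from $\mu'(t) = \tfrac{r^2}{4}\sinh^{-2}(rt/2) - \tfrac14\sinh^{-2}(t/2)$ together with the convexity of $\sinh$ on $[0,\infty)$), so on the compact set $\{\|\vartheta\|_\infty \le M+1\}$ the off-diagonal entries $Q(\vartheta)_{ij}$ are pinned between two positive constants $\delta$ and $\Delta$ depending only on $M$.

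First I would establish concentration of the empirical degrees. Since the sample $G \sim \P^\ast_\theta$ has independent edge weights in $\{0,\dots,r-1\}$, each degree $d_i = \sum_{j\ne i}a_{ij}$ is a sum of $n-1$ independent bounded random variables with mean $\E_\theta[\deg_i(A)] = \sum_{j\ne i}\mu(\theta_i+\theta_j)$. Hoeffding's inequality together with a union bound over $i = 1,\dots,n$ then gives $\|\mathbf d - \E_\theta[\deg(A)]\|_\infty \le C_1\sqrt{kn\log n}$ with probability at least $1 - 2n^{-(k-1)}$, where $C_1$ depends only on $r$; I condition on this event henceforth.

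The technical heart, and the main obstacle, is the following matrix estimate, extending a lemma of Chatterjee, Diaconis, and Sly~\cite{Chatterjee}: if $Q$ is symmetric with $Q_{ii} = \sum_{j\ne i}Q_{ij}$ and $Q_{ij} \in [\delta,\Delta]$ for all $i \ne j$, then $Q$ is invertible and $\|Q^{-1}\|_\infty \le C_2/n$ for $n$ large, with $C_2 = C_2(\delta,\Delta)$. The quadratic-form identity above shows only that $\lambda_{\min}(Q) = \Theta(n)$, hence $\|Q^{-1}\|_2 = \Theta(1/n)$; but passing from the spectral norm to $\|\cdot\|_\infty$ naively loses a factor $\sqrt n$, so obtaining $O(1/n)$ rather than $O(n^{-1/2})$ requires a genuinely entrywise analysis, showing that $Q^{-1}$ has diagonal entries of order $1/n$ but (negative) off-diagonal entries of order $1/n^2$ whose contributions nearly cancel the diagonal term in each row---e.g.\ via the normalized-Laplacian form $Q^{-1} = \tfrac12\, D^{-1/2}(I-L)^{-1}D^{-1/2}$ with $D = \mathrm{diag}(Q_{ii})$ and $L$ a matrix with zero row sums, followed by a careful perturbation estimate. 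This is exactly the ``intricate study of inverses of diagonally dominant positive matrices'' alluded to in the introduction, and I would carry it out in the technical section cited by the theorem.

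Granting the matrix estimate, the proof concludes by a contraction argument on a ball of fixed radius about $\theta$. Consider the simplified-Newton map $\Psi(\vartheta) = \vartheta + Q(\theta)^{-1}\big(F(\vartheta) - \mathbf d\big)$, whose fixed points are precisely the solutions of $F(\vartheta) = \mathbf d$. Then $\Psi(\theta) - \theta = -Q(\theta)^{-1}\big(\mathbf d - \E_\theta[\deg(A)]\big)$, so $\|\Psi(\theta)-\theta\|_\infty \le (C_2/n)\,C_1\sqrt{kn\log n} = O\big(\sqrt{k\log n/n}\,\big)$ by the previous two steps. Also $D\Psi(\vartheta) = Q(\theta)^{-1}\big(Q(\theta) - Q(\vartheta)\big)$; since $\mu''$ is bounded on $\{|t|\le 2M+2\}$, each entry of $Q(\cdot)$ is Lipschitz and a row has $O(n)$ entries, so $\|Q(\theta) - Q(\vartheta)\|_\infty \le C_3\,n\,\|\vartheta-\theta\|_\infty$ and hence $\|D\Psi(\vartheta)\|_\infty \le C_2C_3\|\vartheta-\theta\|_\infty$. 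Therefore, on the $\ell_\infty$-ball of radius $\rho := \min\{1,\,1/(2C_2C_3)\}$ about $\theta$ (a radius depending only on $M$), $\Psi$ is a contraction with ratio $\le\tfrac12$, and for $n$ large the estimate $\|\Psi(\theta)-\theta\|_\infty \le \rho/2$ forces $\Psi$ to map this ball into itself. By the Banach fixed point theorem there is a unique $\hat\theta$ in the ball with $F(\hat\theta) = \mathbf d$ and $\|\hat\theta - \theta\|_\infty \le 2\|\Psi(\theta)-\theta\|_\infty \le C\sqrt{k\log n/n}$, with $C = C(M)$. Finally, since $\hat\theta \in \Theta = \R^n$ solves the moment-matching equation, $\mathbf d \in \M^\circ$, and by Proposition~\ref{Prop:RegularMinimal} this $\hat\theta$ is the (necessarily existing and unique) MLE. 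This yields the asserted existence and error bound, and in particular $\hat\theta \to \theta$ in probability as $n \to \infty$, i.e.\ consistency.
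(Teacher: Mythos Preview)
Your proposal is correct and takes a genuinely different route from the paper's own proof. Both arguments share the Hoeffding step giving $\|\mathbf d - \E_\theta[\deg(A)]\|_\infty \le C_1\sqrt{kn\log n}$ with probability $\ge 1-2n^{-(k-1)}$, but they diverge from there. The paper never invokes the diagonally-dominant inverse bound (Theorem~\ref{Thm:Main}) in the finite discrete case; instead it (i) extends a lemma of Chatterjee--Diaconis--Sly (Lemma~\ref{Lem:FiniteDiscBoundMLE}) to show, via a delicate combinatorial argument based on the generalized Erd\H os--Gallai inequalities~\eqref{Eq:LemFiniteDiscBoundMLE}, that under the concentration event the MLE $\hat\theta$ exists with $\|\hat\theta\|_\infty \le C(M)$, and then (ii) exploits the specific fixed-point iteration $\varphi$ of Theorem~\ref{Thm:MLEAlgFiniteDisc}, starting it at $\theta^{(0)}=\theta$ and using Proposition~\ref{Prop:FiniteDiscBoundFirstIter} to obtain $\|\hat\theta-\theta\|_\infty \le \tfrac{2}{\delta^2}\|\theta-\varphi(\theta)\|_\infty$, which is then controlled by $\|\mathbf d-\mathbf d^\ast\|_\infty/\min_i d_i^\ast$. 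Your argument instead anchors a simplified-Newton map $\Psi$ at $\theta$, uses the matrix bound $\|Q(\theta)^{-1}\|_\infty \le C_2/n$ (which the paper reserves for the continuous and infinite discrete proofs), and closes via Banach's theorem, obtaining existence and the error bound simultaneously without ever needing a separate a~priori bound on $\|\hat\theta\|_\infty$. What your approach buys is uniformity: it treats the finite discrete case exactly as the paper treats the other two, and it is arguably cleaner than the paper's continuous/infinite discrete proofs as well, since those apply the mean-value theorem with a Jacobian $J$ that depends on $\hat\theta$ and must then bootstrap on $\|\hat\theta\|_\infty$ via a quadratic (resp.\ convex) inequality. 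What the paper's approach buys is that Lemma~\ref{Lem:FiniteDiscBoundMLE} is of independent interest (it characterizes existence in terms of the Erd\H os--Gallai slack), and the machinery of Theorem~\ref{Thm:MLEAlgFiniteDisc} yields a concrete, practically implementable algorithm with a certified geometric rate---content your contraction map $\Psi$ does not directly supply, since it presupposes knowledge of $\theta$.
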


%------------
\subsection{Infinite discrete weighted graphs}
\label{Sec:InfiniteDisc}

We now turn our focus to weighted graphs with edge weights in $\N_0$. Full proofs of the results presented here can be found in Section~\ref{Sec:ProofInfiniteDisc}.

%-------------
\subsubsection{Characterization of the distribution}

We take $\nu$ to be the counting measure on $\N_0$. In this case the marginal log-partition function is given by
\begin{equation*}
Z_1(t) = \log \sum_{a = 0}^\infty \exp(-at) = 
\begin{cases}
-\log\big( 1-\exp(-t) \big) & \text{ if } t > 0, \\
\infty \quad & \text{ if } t \leq 0.
\end{cases}
\end{equation*}
Thus, the domain of $Z_1$ is $\Dom(Z_1) = (0,\infty)$ with parameter space $\Theta = \{(\theta_1,\dots,\theta_n) \in \R^n \colon \theta_i+\theta_j > 0 \text{ for } i \neq j\}$,
the same as in the case of continuous weighted graphs. Given $\theta \in \Theta$, the edge weights $A_{ij}$ are independent geometric random variables with probability mass function:
\begin{equation*}
\P^\ast(A_{ij}=a) = \big(1-\exp(-\theta_i-\theta_j)\big) \: \exp\big( -(\theta_i+\theta_j) \: a \big), \quad a \in \N_0.
\end{equation*}
The mean parameters are:
\begin{equation*}
\E_{\P^\ast}[A_{ij}] = \frac{\exp(-\theta_i-\theta_j)}{1-\exp(-\theta_i-\theta_j)} = \frac{1}{\exp(\theta_i+\theta_j)-1},
\end{equation*}
induced by the mean function:
\begin{equation*}
\mu(t) = \frac{1}{\exp(t)-1}, \quad t > 0.
\end{equation*}

%------------
\subsubsection{Existence, uniqueness, and consistency of the MLE}

Consider the problem of finding the MLE of $\theta$ from one graph sample $G \sim \P^\ast_\theta$. Let $\mathbf{d}$ denote the degree sequence of $G$. Then the MLE $\hat \theta \in \Theta$, which satisfies the moment-matching equation $\E_{\hat \theta}[\deg(A)] = \mathbf{d}$, is a solution to the system of equations:
\begin{equation}\label{Eq:MLEEqInfiniteDisc}
d_i = \sum_{j \neq i} \frac{1}{\exp(\hat{\theta}_i + \hat{\theta}_j)-1}, \quad i = 1,\dots,n.
\end{equation}

As before, the MLE $\hat \theta$ exists and is unique if and only if $\mathbf{d} \in \M^\circ$, where $\M$ is the mean parameter space.
Since $\nu^{\binom{n}{2}}$ is the counting measure on $\N_0^{\binom{n}{2}}$, the set $\mathfrak{P}$ contains all the Dirac measures, so that $\M = \conv(\W)$ from Proposition~\ref{Prop:MConvW}. Here $\W$ is the set of all {\em (infinite discrete) graphical sequences}; namely, degree sequences of weighted graphs with edge weights in $\N_0$. The following result provides a precise criterion for such graphical sequences. Note that condition~\eqref{Eq:GraphicalInfiniteDisc} below is implied by the limit $r \to \infty$ in Theorem~\ref{Thm:GraphicalFiniteDisc}.

The criterion in Theorem~\ref{Thm:GraphicalInfiniteDisc} allows us to write an explicit form for the set of graphical sequences $\W$:
\begin{equation*}
\W = \Big\{(d_1, \dots, d_n) \in \N_0^n \colon \sum_{i=1}^n d_i \text{ is even and } \max_{1 \leq i \leq n} d_i \leq \frac{1}{2} \sum_{i=1}^n d_i \Big\}.
\end{equation*}
Now we need to characterize $\conv(\W)$. Let $\W_1$ denote the set of all continuous graphical sequences from Theorem~\ref{Thm:GraphicalCont}, when the edge weights are in $\R_0$:
\begin{equation*}
\W_1 = \Big\{(d_1, \dots, d_n) \in \R_0^n \colon \max_{1 \leq i \leq n} d_i \leq \frac{1}{2} \sum_{i=1}^n d_i \Big\}.
\end{equation*}
It turns out that when we take the convex hull of $\W$, we essentially recover $\W_1$.

\begin{lemma}\label{Lem:ConvWInfiniteDisc}
$\overline{\conv(\W)} = \W_1$.
\end{lemma}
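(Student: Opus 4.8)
\textbf{Proof proposal for Lemma~\ref{Lem:ConvWInfiniteDisc}.}

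The plan is to prove the two inclusions $\overline{\conv(\W)} \subseteq \W_1$ and $\W_1 \subseteq \overline{\conv(\W)}$ separately. The first inclusion is the easy direction: every $\mathbf{d} \in \W$ satisfies $\max_i d_i \le \frac12 \sum_i d_i$ by Theorem~\ref{Thm:GraphicalInfiniteDisc}, so $\W \subseteq \W_1$; since $\W_1$ is defined by finitely many (weak) linear inequalities it is closed and convex, hence $\overline{\conv(\W)} \subseteq \W_1$. (That $\W_1$ is convex is already recorded in Lemma~\ref{Lem:W-Convex}, applied with the continuous weight set.)

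The substance is the reverse inclusion. First I would observe that $\W \subseteq \N_0^n$, so $\conv(\W) \subseteq \{\mathbf{d} \in \R_0^n : \sum_i d_i \text{ is constrained to the rationals in the obvious way}\}$ — more usefully, $\conv(\W)$ is contained in the set of nonnegative rational-coordinate points, and its closure can only add limit points. So it suffices to show that every $\mathbf{d} \in \W_1$ with \emph{rational} coordinates lies in $\conv(\W)$, and then invoke density of $\W_1 \cap \Q^n$ in $\W_1$ together with the fact that $\overline{\conv(\W)}$ is closed. Given a rational $\mathbf{d} \in \W_1$, pick a large integer $N$ so that $N\mathbf{d} \in \N_0^n$; I want to express $\mathbf{d}$ as a convex combination of elements of $\W$. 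The natural move is to write $N \mathbf{d}$ as a sum of $N$ integer vectors each of which is a genuine graphical sequence in $\W$, so that $\mathbf{d} = \frac1N \sum$ of these is the desired convex combination. Concretely, I would try to split $N\mathbf{d}$ into $N$ vectors $\mathbf{e}^{(1)}, \dots, \mathbf{e}^{(N)} \in \N_0^n$ with $\sum_t \mathbf{e}^{(t)} = N\mathbf{d}$, each $\mathbf{e}^{(t)}$ having even coordinate sum, and each $\mathbf{e}^{(t)}$ satisfying the max-versus-half-sum inequality of Theorem~\ref{Thm:GraphicalInfiniteDisc}. A clean way to do the last point: if every $\mathbf{e}^{(t)}$ is itself (close to) a scalar multiple of $\mathbf{d}$, or more robustly if each $\mathbf{e}^{(t)}$ is "balanced" in the sense that no coordinate exceeds half the sum, the criterion holds. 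One can, for instance, use the fact that $N\mathbf{d}$ itself satisfies $\max_i (N d_i) \le \frac12 \sum_i N d_i$ and peel off graphical pieces greedily (à la the Erdős–Gallai realizability proof), each peeled piece being a single weighted graph's degree sequence, i.e. an element of $\W$; since there are only finitely many pieces after clearing denominators, averaging gives $\mathbf{d} \in \conv(\W)$.

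The main obstacle I anticipate is the parity condition "$\sum_i d_i$ is even" appearing in the definition of $\W$: arbitrary rational points of $\W_1$ need not clear to integer vectors with even coordinate sum after a single scaling, and when decomposing $N\mathbf{d}$ into $N$ summands one must arrange each summand to have even sum, not just the total. This is precisely why the lemma states $\overline{\conv(\W)} = \W_1$ rather than $\conv(\W) = \W_1$ — the parity lattice obstruction is washed out only in the closure. I would handle it by allowing a small perturbation: given rational $\mathbf{d} \in \W_1$ and $\varepsilon > 0$, find $\mathbf{d}' \in \conv(\W)$ with $\|\mathbf{d} - \mathbf{d}'\|_\infty < \varepsilon$ by first moving to a nearby rational point whose denominator-cleared version is evenly decomposable (adjusting two coordinates by $\pm 1$ in the integer picture changes the sum's parity, and for $n \ge 3$ with a strict interior point there is always room to do this while keeping the max-vs-half-sum inequality), then applying the decomposition above. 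Taking $\varepsilon \to 0$ places $\mathbf{d}$ in $\overline{\conv(\W)}$. The only case needing a separate glance is boundary points of $\W_1$ (where $\max_i d_i = \frac12 \sum_i d_i$), which are approached from the interior, and points with some zero coordinates, both of which are routine limits.
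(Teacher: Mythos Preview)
Your overall strategy matches the paper's: show $\W \subseteq \W_1$ (hence $\overline{\conv(\W)} \subseteq \W_1$ since $\W_1$ is closed and convex), then show every rational point of $\W_1$ already lies in $\conv(\W)$, and finish by density of $\W_1 \cap \Q^n$ in $\W_1$. However, you work much harder than necessary on two points where the paper's argument is nearly trivial.

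First, the parity obstruction. You propose a perturbation argument to land on an integer vector with even coordinate sum. The paper simply scales by $2K$ rather than $K$: if $K\mathbf{d} \in \N_0^n$, then $\sum_i 2Kd_i = 2\sum_i Kd_i$ is automatically even, and the inequality $\max_i 2Kd_i \le \tfrac{1}{2}\sum_i 2Kd_i$ is inherited from $\mathbf{d} \in \W_1$, so $2K\mathbf{d} \in \W$ directly by Theorem~\ref{Thm:GraphicalInfiniteDisc}. No perturbation is needed.

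Second, the convex combination. You attempt to decompose $N\mathbf{d}$ as a sum of $N$ graphical sequences, which is not obviously possible as stated: your ``peel off pieces greedily'' is vague, and if you realize $N\mathbf{d}$ by a graph and peel off one edge at a time, the number of pieces is the total edge weight, not $N$, so the coefficients do not sum to $1$. The paper sidesteps this entirely by observing that $0 \in \W$ and $2K\mathbf{d} \in \W$, so
\[
\mathbf{d} \;=\; \tfrac{1}{2K}\,(2K\mathbf{d}) \;+\; \bigl(1-\tfrac{1}{2K}\bigr)\cdot 0
\]
is already a convex combination of two elements of $\W$. This gives $\W_1 \cap \Q^n \subseteq \conv(\W)$ in one line, after which the density argument (which the paper does spell out) finishes the proof.
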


Recalling that a convex set and its closure have the same interior points, the result above gives us:
\begin{equation*}
\M^\circ = \conv(\W)^\circ = \big(\overline{\conv(\W)}\big)^\circ = \W_1^\circ =
\Big\{(d_1, \dots, d_n) \in \R_+^n \colon \max_{1 \leq i \leq n} d_i < \frac{1}{2} \sum_{i=1}^n d_i \Big\}.
\end{equation*}

\begin{example}
Let $n = 3$ and $\mathbf{d} = (d_1,d_2,d_3) \in \R^n$ with $d_1 \geq d_2 \geq d_3$. It can be easily verified that the system of equations~\eqref{Eq:MLEEqInfiniteDisc} is:
\begin{equation*}
\begin{split}
\hat{\theta}_1+\hat{\theta}_2 &= \log\left( 1+\frac{2}{d_1+d_2-d_3} \right), \\
\hat{\theta}_1+\hat{\theta}_3 &= \log\left( 1+\frac{2}{d_1-d_2+d_3} \right), \\
\hat{\theta}_2+\hat{\theta}_3 &= \log\left( 1+\frac{2}{-d_1+d_2+d_3} \right),
\end{split}
\end{equation*}
from which we can obtain a unique solution $\hat{\theta} = (\hat{\theta}_1,\hat{\theta}_2,\hat{\theta}_3)$. Recall that $\hat{\theta} \in \Theta$ means $\hat{\theta}_1+\hat{\theta}_2>0$, $\hat{\theta}_1+\hat{\theta}_3>0$, and $\hat{\theta}_2+\hat{\theta}_3>0$, so the equations above tell us that $\hat\theta \in \Theta$ if and only if $2/(-d_1+d_2+d_3) > 0$, or equivalently, $d_1 < d_2+d_3$. This also implies $d_3 > d_1-d_2 \geq 0$, so $\mathbf{d} \in \R_+^3$. Thus, the system of equations~\eqref{Eq:MLEEqInfiniteDisc} has a unique solution $\hat{\theta} \in \Theta$ if and only if $\mathbf{d} \in \M^\circ$, as claimed above.
\end{example}

Finally, we prove that with high probability the MLE $\hat \theta$ exists and converges to $\theta$ as $n \to \infty$.

\begin{theorem}[Unbounded discrete weights]\label{Thm:ConsistencyInfiniteDisc}
Let $M \geq L > 0$ and $k \geq 1$ be fixed. Given $\theta \in \Theta$ with $L \leq \theta_i + \theta_j \leq M$, $i \neq j$, consider the problem of finding the MLE $\hat \theta \in \Theta$ of $\theta$ from one graph sample $G \sim \P^\ast_\theta$. Then for sufficiently large $n$, with probability at least $1-3n^{-(k-1)}$ the MLE $\hat \theta \in \Theta$ exists and satisfies:
\begin{equation*}
\|\hat \theta - \theta\|_\infty \leq \frac{8 \: \exp(5M)}{L} \: \sqrt{\frac{k \log n}{\gamma n}},
\end{equation*}
where $\gamma > 0$ is a universal constant.
\end{theorem}

%------------
\subsection{Continuous weighted graphs}
\label{Sec:Cont}

In this section we study weighted graphs with edge weights in $\R_0$. The proofs of the results presented here are provided in Section~\ref{Sec:ProofCont}.

%-------------
\subsubsection{Characterization of the distribution}

We take $\nu$ to be the Lebesgue measure on $\R_0$. The marginal log-partition function is:
\begin{equation*}
Z_1(t) = \log \int_{\R_0} \exp(-ta) \: da =
\begin{cases}
\log(1/t) & \text{ if } t > 0 \\
\infty \quad & \text{ if } t \leq 0.
\end{cases}
\end{equation*}
Thus, $\Dom(Z_1) = \R_+$, and the natural parameter space is $\Theta = \{(\theta_1, \dots, \theta_n) \in \R^n \colon \theta_i+\theta_j > 0 \text{ for } i \neq j\}$.
For $\theta \in \Theta$, the edge weights $A_{ij}$ are independent exponential random variables with density:
\begin{equation*}
p_{ij}^\ast(a) = (\theta_i+\theta_j) \: \exp\big(-(\theta_i+\theta_j) \: a\big), \quad \text{ for } a \in \R_0,
\end{equation*}
and mean parameter $\E_\theta[A_{ij}] = 1/(\theta_i+\theta_j)$. The corresponding mean function is given by $\mu(t) = \frac{1}{t}, \, t > 0$.

%------------
\subsubsection{Existence, uniqueness, and consistency of the MLE}

We now consider the problem of finding the MLE of $\theta$ from one graph sample $G \sim \P^\ast_\theta$. As we saw previously, the MLE $\hat \theta \in \Theta$ satisfies the moment-matching equation $\E_{\hat \theta}[\deg(A)] = \mathbf{d}$, where $\mathbf{d}$ is the degree sequence of the sample graph $G$. Equivalently, $\hat \theta \in \Theta$ is a solution to the following system of equations:
\begin{equation}\label{Eq:MLEEqCont}
d_i = \sum_{j \neq i} \frac{1}{\hat{\theta}_i + \hat{\theta}_j}, \quad i = 1,\dots,n.
\end{equation}

\begin{remark}\label{SturmfelsRemark}
The system~\eqref{Eq:MLEEqCont} is a special case of a general class that Sanyal, Sturmfels, and Vinzant~\cite{SturmEntDisc} study using algebraic geometry and matroid theory.  Consider the following polynomial in $t$:
\begin{equation*}
\chi(t) = \sum_{k=0}^n \left(\stirlingtwo{n}{k} +n \stirlingtwo{n-1}{k} \right)(t-1)_k^{(2)},
\end{equation*}
in which $\stirlingtwo{n}{k}$ is the Stirling number of the second kind and $(x)_{k+1}^{(2)} = x(x-2)\cdots (x-2k)$ is a generalized falling factorial.  Then, there is a polynomial $H(\mathbf{d})$ in the $d_i$ such that for $\mathbf{d} \in \mathbb R^n$ with $H(\mathbf{d}) \neq 0$, the number of solutions $\theta \in \mathbb R^n$ to~\eqref{Eq:MLEEqCont} is $(-1)^n \chi(0)$.  Moreover, the polynomial $H(\mathbf{d})$ has degree $2(-1)^n(n \chi(0) + \chi'(0))$ and characterizes those $\mathbf{d}$ for which the equations above have multiple roots.  We refer to \cite{SturmEntDisc} for more details.
\end{remark}

The MLE $\hat \theta$ exists and is unique if and only if the empirical degree sequence $\mathbf{d}$ belongs to the interior $\M^\circ$.  We characterize the set of graphical sequences $\W$ and determine its relation to $\M$.  The finite discrete graphical sequences from Section~\ref{Sec:FiniteDiscrete} have combinatorial constraints because there are only finitely many possible edge weights between any pair of vertices, and these constraints translate into a set of inequalities in the generalized Erd\H{o}s-Gallai criterion in Theorem~\ref{Thm:GraphicalFiniteDisc}. In the case of continuous weighted graphs, however, we do not have these constraints because every edge can have as much weight as possible. Therefore, the criterion for a continuous graphical sequence should be simpler than in Theorem~\ref{Thm:GraphicalFiniteDisc}, as the following  shows.

\begin{theorem}\label{Thm:GraphicalCont}
A sequence $(d_1, \dots, d_n) \in \R_0^n$ is graphic if and only if:
\begin{equation}\label{Eq:GraphicalCont}
\max_{1 \leq i \leq n} d_i \leq \frac{1}{2} \sum_{i=1}^n d_i.
\end{equation}
\end{theorem}

We note that condition~\eqref{Eq:GraphicalCont} is implied by the case $k = 1$ in the conditions~\eqref{Eq:GraphicalFiniteDisc}. This is to be expected, since any finite discrete weighted graph is also a continuous weighted graph, so finite discrete graphical sequences are also continuous graphical sequences.

Given the criterion in Theorem~\ref{Thm:GraphicalCont}, we can write the set $\W$ of graphical sequences as follows:
\begin{equation*}
\W = \Big\{(d_1, \dots, d_n) \in \R_0^n \colon \max_{1 \leq i \leq n} d_i \leq \frac{1}{2} \sum_{i=1}^n d_i \Big\}.
\end{equation*}
Moreover, we can also show that the set of graphical sequences coincide with the mean parameter space.

\begin{lemma}\label{Lem:W-Convex}
The set $\W$ is convex, and $\M = \W$.
\end{lemma}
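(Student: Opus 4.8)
The plan is to prove convexity of $\W$ directly from its explicit description, then establish the two inclusions $\M \subseteq \W$ and $\W \subseteq \M$ separately. For convexity, I would observe that $\W = \bigcap_{i=1}^n \W_i$, where
\[
\W_i = \Big\{ \mathbf{d} \in \R_0^n \colon d_i \leq \tfrac{1}{2}\textstyle\sum_{j=1}^n d_j \Big\}
= \Big\{ \mathbf{d} \in \R_0^n \colon \tfrac{1}{2}\textstyle\sum_{j \neq i} d_j - \tfrac{1}{2} d_i \geq 0 \Big\}.
\]
Each $\W_i$ is the intersection of the (convex) nonnegative orthant with a closed half-space, hence convex, and a finite intersection of convex sets is convex. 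This gives convexity of $\W$ immediately, which also reproves the inclusion $\conv(\W) \subseteq \W$ part of the picture; combined with the trivial $\W \subseteq \conv(\W)$ we even get $\W = \conv(\W)$, so by Proposition~\ref{Prop:MConvW} it remains only to show $\M = \W$.

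Next, $\M \subseteq \W$. Since $\M$ is convex (established in the excerpt) and $\W$ is now known to be convex, and since by Proposition~\ref{Prop:MConvW} we have $\M \subseteq \conv(\W) = \W$, this inclusion is actually already in hand — the containment $\M \subseteq \conv(\W)$ from Proposition~\ref{Prop:MConvW} together with $\conv(\W) = \W$ does the job. (Concretely: for any $\P \in \mathfrak{P}$ and $A \sim \P$, each realization $\deg(A)$ satisfies $\max_i \deg_i(A) \le \sum_{j\ne i} A_{ij} \le \frac12 \sum_i \deg_i(A)$ because $A_{ij}\ge 0$, so $\deg(A) \in \W$, and taking expectations preserves this linear inequality.)

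The substantive direction is $\W \subseteq \M$, and this is where the main work lies, since (as the excerpt notes) $\mathfrak{P}$ does not contain the Dirac measures in the continuous case, so Proposition~\ref{Prop:MConvW} does not hand us $\W \subseteq \M$ for free. I would proceed as follows. Fix $\mathbf{d} \in \W$. The goal is to construct a distribution $\P \in \mathfrak{P}$ — absolutely continuous with respect to Lebesgue measure on $\R_0^{\binom{n}{2}}$ — whose expected degree sequence is exactly $\mathbf{d}$. The natural attempt is to make the edge weights independent with $A_{ij} \sim \mathrm{Exp}(\lambda_{ij})$, i.e.\ $\E[A_{ij}] = 1/\lambda_{ij} =: w_{ij}$, and then ask whether one can choose nonnegative reals $w_{ij} = w_{ji} \ge 0$ (with $w_{ij} > 0$ to keep the density well-defined, but we can allow some to be arbitrarily small) such that $\sum_{j \neq i} w_{ij} = d_i$ for all $i$. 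That is, I need to realize $\mathbf{d}$ as the degree sequence of a weighted graph with \emph{real} nonnegative edge weights — which is precisely the content of Theorem~\ref{Thm:GraphicalCont}. So the cleanest route is: (1) invoke Theorem~\ref{Thm:GraphicalCont} to get a weight matrix $(w_{ij})$ with $w_{ij} \ge 0$, $w_{ii} = 0$, and $\sum_{j\ne i} w_{ij} = d_i$; (2) handle the nuisance that some $w_{ij}$ may be zero — either perturb the weights slightly within $\W$ and appeal to the fact that $\M$ is relatively closed under the degree map, or more simply work with $\M^\circ$ for $\mathbf{d}$ in the interior and then argue the boundary case by a limiting/convexity argument; (3) for strictly positive weights, set $A_{ij} \sim \mathrm{Exp}(1/w_{ij})$ independently, note the joint density with respect to $\nu^{\binom{n}{2}}$ exists, and compute $\E[\deg_i(A)] = \sum_{j\ne i} w_{ij} = d_i$, so $\mathbf{d} \in \M$.

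The main obstacle is the edge case where the constructed weight matrix has zero entries, making the naive product-of-exponentials density degenerate (an $\mathrm{Exp}(\infty)$ factor is a point mass). I would resolve this by first treating $\mathbf{d}$ in the relative interior: show that any $\mathbf{d}$ with $\max_i d_i < \frac12\sum_i d_i$ and all $d_i > 0$ can be realized with \emph{strictly positive} weights $w_{ij} > 0$ (e.g.\ start from a valid realization and push a small amount of weight onto every missing edge while compensating around a cycle — since $n \ge 3$ there is enough room — or use that the set of positive-weight degree sequences is open and contains a neighborhood). This shows $\W^\circ \subseteq \M$. Then, because $\M \subseteq \W$ is already proved and $\M$ is convex with $\W^\circ \subseteq \M \subseteq \W = \overline{\W^\circ}$ (the last equality holding as $\W$ is a full-dimensional convex set), one might hope to conclude $\M = \W$; but note $\M$ need not be closed, so to nail the boundary points of $\W$ one genuinely does need Theorem~\ref{Thm:GraphicalCont} plus the $\mathrm{Exp}$-construction allowing $w_{ij}$ arbitrarily small but positive — a boundary $\mathbf{d}$ of $\W$ still has a representation with, say, one edge weight zero, and I would replace that zero by $\varepsilon$ and subtract $\varepsilon$ elsewhere along a path, checking the result stays a valid (positive-weight) realization of the \emph{same} $\mathbf{d}$; the existence of such a path uses $n \ge 3$ and the structure of $\W$. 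This completes $\W \subseteq \M$ and hence $\M = \W$.
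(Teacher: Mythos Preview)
Your overall route matches the paper's: prove convexity of $\W$, get $\M \subseteq \W$ from Proposition~\ref{Prop:MConvW} together with $\conv(\W)=\W$, and for $\W \subseteq \M$ invoke Theorem~\ref{Thm:GraphicalCont} to obtain a weight matrix $(w_{ij})$ realizing $\mathbf{d}$ and then take independent exponentials with means $w_{ij}$. Your convexity argument (intersection of half-spaces with the orthant) is a cleaner variant of the paper's direct verification; otherwise the strategies coincide.

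Where you go beyond the paper is in flagging the zero-weight issue: the paper simply writes the density $\prod_{\{i,j\}} w_{ij}^{-1}\exp(-a_{ij}/w_{ij})$ without commenting on the case $w_{ij}=0$, and you are right that this is a genuine problem. You also correctly observe that a limiting argument cannot close the gap because $\M$ need not be closed.

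However, your proposed boundary fix does not work. Take $\mathbf{d}\in\partial\W$ with $d_1=\sum_{j\ge 2} d_j>0$. Summing the degree constraints for vertices $2,\dots,n$ gives $\sum_{j\ge 2} d_j = d_1 + 2\sum_{2\le j<k} w_{jk}$, so \emph{every} edge among $\{2,\dots,n\}$ is forced to zero in \emph{every} realization; there is no ``add $\varepsilon$ to one zero edge and compensate along a path'' move that preserves $\mathbf{d}$. Worse, the same computation with expectations shows that any $\P\in\mathfrak{P}$ with $\E_\P[\deg(A)]=\mathbf{d}$ must have $\E[A_{jk}]=0$, hence $A_{jk}=0$ a.s., for all $j,k\ge 2$, which is incompatible with absolute continuity. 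So such boundary points are actually not in $\M$, and neither your argument nor the paper's can place them there. What does go through is $\W^\circ \subseteq \M \subseteq \W$ (your interior construction is fine, e.g.\ realize $\mathbf{d}-(n-1)\varepsilon\mathbf{1}\in\W$ and add $\varepsilon$ to every edge), which yields $\M^\circ=\W^\circ$; this is all that is used downstream in the paper.
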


The result above, together with the result of Proposition~\ref{Prop:RegularMinimal}, implies that the MLE $\hat \theta$ exists and is unique if and only if the empirical degree sequence $\mathbf{d}$ belongs to the interior of the mean parameter space, which can be written explicitly:
\begin{equation*}
\M^\circ = \Big\{(d_1', \dots, d_n') \in \R_+^n \colon \max_{1 \leq i \leq n} d_i' < \frac{1}{2} \sum_{i=1}^n d_i' \Big\}.
\end{equation*}

\begin{example}
Let $n = 3$ and $\mathbf{d} = (d_1,d_2,d_3) \in \R^n$ with $d_1 \geq d_2 \geq d_3$. It is easy to see that the system of equations~\eqref{Eq:MLEEqCont} reduces to:
\begin{equation*}
\begin{split}
\frac{1}{\hat{\theta}_1+\hat{\theta}_2} &= \frac{1}{2}(d_1+d_2-d_3), \\
\frac{1}{\hat{\theta}_1+\hat{\theta}_3} &= \frac{1}{2}(d_1-d_2+d_3), \\
\frac{1}{\hat{\theta}_2+\hat{\theta}_3} &= \frac{1}{2}(-d_1+d_2+d_3),
\end{split}
\end{equation*}
from which we obtain a unique solution $\hat{\theta} = (\hat{\theta}_1,\hat{\theta}_2,\hat{\theta}_3)$. Recall that $\hat{\theta} \in \Theta$ means $\hat{\theta}_1+\hat{\theta}_2 > 0$, $\hat{\theta}_1+\hat{\theta}_3 > 0$, and $\hat{\theta}_2+\hat{\theta}_3 > 0$, so the equations above tell us that $\hat{\theta} \in \Theta$ if and only if $d_1 < d_2+d_3$. In particular, this also implies $d_3 > d_1-d_2 \geq 0$, so $\mathbf{d} \in \R_+^3$. Hence, there is a unique solution $\hat{\theta} \in \Theta$ to the system of equations~\eqref{Eq:MLEEqCont} if and only if $\mathbf{d} \in \M^\circ$, as claimed above.
\end{example}

Finally, we prove that the MLE $\hat \theta$ is a consistent estimator of $\theta$ as $n \to \infty$.

\begin{theorem}[Unbounded continuous weights]\label{Thm:ConsistencyCont}
Let $M \geq L > 0$ and $k \geq 1$ be fixed. Given $\theta \in \Theta$ with $L \leq \theta_i + \theta_j \leq M$, $i \neq j$, consider the problem of finding the MLE $\hat \theta \in \Theta$ of $\theta$ from one graph sample $G \sim \P^\ast_\theta$. Then for sufficiently large $n$, with probability at least $1-2n^{-(k-1)}$ the MLE $\hat \theta \in \Theta$ exists and satisfies:
\begin{equation*}
\|\hat \theta - \theta\|_\infty \leq \frac{100M^2}{L} \sqrt{\frac{k \log n}{\gamma n}},
\end{equation*}
where $\gamma > 0$ is a universal constant.
\end{theorem}

%-----------------------
\section{Proofs of main results}
\label{Sec:Proofs}

In this section we provide proofs for the technical results presented in Section~\ref{Sec:Specific}. The proofs of the characterization of weighted graphical sequences (Theorems~\ref{Thm:GraphicalFiniteDisc},~\ref{Thm:GraphicalCont}, and~\ref{Thm:GraphicalInfiniteDisc}) are inspired by the constructive proof of the classical Erd\H{o}s-Gallai criterion by Choudum~\cite{Choudum}.

%---------------------------
\subsection{Preliminaries}

We begin by presenting several results that we will use in this section. We use the definition of sub-exponential random variables and the concentration inequality presented in~\cite{vershynin}.

%--------------------------
\subsubsection{Concentration inequality for sub-exponential random variables}

We say that a real-valued random variable $X$ is {\em sub-exponential} with parameter $\kappa > 0$ if $\E[|X|^p]^{1/p} \leq \kappa p, \text{ for all } p \geq 1$.
Note that if $X$ is a $\kappa $-sub-exponential random variable with finite first moment, then the centered random variable $X-\E[X]$ is also sub-exponential with parameter $2 \kappa $. This follows from the triangle inequality applied to the $p$-norm, followed by Jensen's inequality for $p \geq 1$:
\begin{equation*}
\begin{split}
\E\big[\big|X-\E[X]\big|^p\big]^{1/p}
&\leq \E[|X|^p]^{1/p} + \big|\E[X]\big|
\leq 2\E[|X|^p]^{1/p}.
\end{split}
\end{equation*}
Sub-exponential random variables satisfy the following concentration inequality.

\begin{theorem}[{\cite[Corollary~5.17]{vershynin}}]\label{Thm:ConcIneqSubExp}
Let $X_1, \dots, X_n$ be independent centered random variables, and suppose each $X_i$ is sub-exponential with parameter $\kappa_i$. Let $\kappa = \max_{1 \leq i \leq n} \kappa_i$. For every $\epsilon \geq 0$, we have the bound:
\begin{equation*}
\P\left( \left| \frac{1}{n} \sum_{i=1}^n X_i \right| \geq \epsilon \right) \leq 2\exp\left[-\gamma \, n \cdot \min\Big(\frac{\epsilon^2}{\kappa^2}, \: \frac{\epsilon}{\kappa} \Big) \right],
\end{equation*}
with $\gamma > 0$ is an absolute constant.
\end{theorem}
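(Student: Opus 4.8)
This theorem is quoted verbatim from Vershynin's book, so in the paper itself no proof is given; what follows is how one would prove it from scratch. The plan is the standard Chernoff-bound (Bernstein-type) argument, carried out in two stages: first convert the polynomial moment-growth hypothesis into an exponential moment bound valid in a neighborhood of the origin, and then exponentiate, use independence, and optimize.

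First I would show that $\E[|X_i|^p]^{1/p} \le \kappa_i p$ together with $\E[X_i] = 0$ implies that there are absolute constants $c_0, C_0 > 0$ such that $\E[\exp(\lambda X_i)] \le \exp(C_0 \kappa_i^2 \lambda^2)$ whenever $|\lambda| \le c_0/\kappa_i$. To see this I would expand $\exp(\lambda X_i) = \sum_{p \ge 0} \lambda^p X_i^p / p!$, take expectations term by term (justified by absolute convergence for $|\lambda|$ small), and estimate $|\E[X_i^p]| \le \E[|X_i|^p] \le (\kappa_i p)^p \le (e\kappa_i)^p \, p!$ using the Stirling bound $p! \ge (p/e)^p$. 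Since $X_i$ is centered, the $p = 0$ and $p = 1$ terms together contribute exactly $1$, while for $|\lambda| \le c_0/\kappa_i$ with $c_0$ small enough the remaining tail $\sum_{p \ge 2} (e\kappa_i |\lambda|)^p$ is a convergent geometric series bounded by a constant times $\kappa_i^2 \lambda^2$; hence $\E[\exp(\lambda X_i)] \le 1 + C_0' \kappa_i^2 \lambda^2 \le \exp(C_0 \kappa_i^2 \lambda^2)$.

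Next, put $\kappa = \max_i \kappa_i$, fix $\epsilon \ge 0$, and take any $\lambda \in (0, c_0/\kappa]$. By Markov's inequality applied to $\exp(\lambda \sum_i X_i)$, independence, and the moment generating function bound just established,
\[
\P\Big(\tfrac1n \textstyle\sum_{i} X_i \ge \epsilon\Big) \le e^{-\lambda n \epsilon} \prod_{i=1}^n \E[\exp(\lambda X_i)] \le \exp\big(-n(\lambda \epsilon - C_0 \kappa^2 \lambda^2)\big).
\]
I would then optimize the exponent over $\lambda \in (0, c_0/\kappa]$: the unconstrained minimizer is $\lambda^\star = \epsilon/(2C_0\kappa^2)$, which is admissible exactly when $\epsilon \le 2C_0 c_0 \kappa$ and gives exponent $-n\epsilon^2/(4C_0\kappa^2)$; for larger $\epsilon$ one instead takes $\lambda = c_0/\kappa$ and checks that the exponent is then at most $-\tfrac12 n c_0 \epsilon/\kappa$. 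In both regimes the exponent is at most $-\gamma\, n \min(\epsilon^2/\kappa^2,\, \epsilon/\kappa)$ for a suitable absolute constant $\gamma > 0$. Finally I would rerun the identical estimate with each $X_i$ replaced by $-X_i$ (still centered, still $\kappa_i$-sub-exponential) to control the lower tail, and combine the two one-sided bounds by a union bound, which produces the factor $2$ in the statement.

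The hard part is the first step: cleanly establishing the equivalence between the polynomial moment-growth definition of sub-exponentiality and the exponential moment bound, and — crucially — tracking that the latter holds only on an interval $|\lambda| \le c_0/\kappa$ whose length scales like $1/\kappa$. This restriction on the admissible range of $\lambda$ is precisely what forces the two-regime behavior of the final tail bound ($\epsilon^2/\kappa^2$ when $\epsilon$ is small, $\epsilon/\kappa$ when $\epsilon$ is large); once that lemma is in hand, the remaining Chernoff optimization is entirely routine.
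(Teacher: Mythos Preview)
Your opening observation is exactly right: the paper does not prove this statement at all --- it is simply imported from Vershynin as a black box, so there is nothing to compare against. Your sketch is the standard Bernstein-type argument (moment-growth $\Rightarrow$ MGF bound on $|\lambda|\le c_0/\kappa$ via Taylor expansion and Stirling, then Chernoff and optimize in two regimes, then union bound for the two-sided estimate), and it is correct as written.
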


We will apply the concentration inequality above to exponential and geometric random variables, which are the distributions of the edge weights of continuous weighted graphs (from Section~\ref{Sec:Cont}) and infinite discrete weighted graphs (from Section~\ref{Sec:InfiniteDisc}).

\begin{lemma}\label{Lem:SubExp-Exp}
Let $X$ be an exponential random variable with $\E[X] = 1/\lambda$. Then $X$ is sub-exponential with parameter $1/\lambda$, and the centered random variable $X-1/\lambda$ is sub-exponential with parameter $2/\lambda$.
\end{lemma}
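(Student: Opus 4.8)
The plan is to compute the moments of an exponential random variable directly and bound them in the form required by the definition of sub-exponentiality. First I would take $X$ exponential with rate $\lambda$, so that $\E[X^p] = \int_0^\infty x^p \lambda e^{-\lambda x}\,dx = \Gamma(p+1)/\lambda^p$ for $p \geq 1$. Since $X \geq 0$ we have $\E[|X|^p] = \E[X^p]$, and hence
\begin{equation*}
\E[|X|^p]^{1/p} = \frac{\Gamma(p+1)^{1/p}}{\lambda}.
\end{equation*}
The next step is to show $\Gamma(p+1)^{1/p} \leq p$ for all $p \geq 1$, which gives $\E[|X|^p]^{1/p} \leq p/\lambda$ and therefore establishes that $X$ is sub-exponential with parameter $\kappa = 1/\lambda$.

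For the bound $\Gamma(p+1)^{1/p} \leq p$, I would argue as follows. Using the standard integral estimate $\Gamma(p+1) = \int_0^\infty t^p e^{-t}\,dt \leq \int_0^\infty t^p e^{-t}\,dt$, one can bound $\Gamma(p+1) \leq p^p$ by comparing with the maximum of the integrand or, more simply, by invoking the elementary inequality $\Gamma(p+1) \leq p^p$ valid for $p \geq 1$ (which follows, e.g., from $\Gamma(p+1) = p\,\Gamma(p)$ together with an induction/convexity argument, or from the fact that $\log\Gamma(p+1) \leq p\log p$ can be checked by comparing derivatives and the value at $p=1$, where $\Gamma(2) = 1 \leq 1 = 1^1$). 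Taking $p$-th roots yields $\Gamma(p+1)^{1/p} \leq p$, as desired.

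Finally, the statement about the centered variable $X - 1/\lambda$ follows immediately from the general observation made in the text just before Theorem~\ref{Thm:ConcIneqSubExp}: if $X$ is $\kappa$-sub-exponential with finite first moment, then $X - \E[X]$ is $2\kappa$-sub-exponential. Since $\E[X] = 1/\lambda$ and $X$ is $(1/\lambda)$-sub-exponential, we conclude that $X - 1/\lambda$ is $(2/\lambda)$-sub-exponential. The only mildly delicate point in the whole argument is the inequality $\Gamma(p+1)^{1/p} \leq p$; everything else is a direct substitution. One could also sidestep the Gamma-function estimate entirely by using the alternative characterization of sub-exponential variables via the moment generating function (bounding $\E[e^{sX}] = \lambda/(\lambda - s)$ for $s < \lambda$) and the equivalence of the various sub-exponential parameters up to absolute constants, but since the paper's definition is stated in terms of moments, the moment computation above is the most direct route.
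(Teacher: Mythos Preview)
Your proposal is correct and follows essentially the same approach as the paper: compute $\E[|X|^p] = \Gamma(p+1)/\lambda^p$ directly, invoke the inequality $\Gamma(p+1) \leq p^p$ for $p \geq 1$, and then appeal to the general observation about centering to handle $X - 1/\lambda$. The paper simply asserts that $\Gamma(p+1) \leq p^p$ ``can be easily verified,'' whereas you sketch a justification; otherwise the arguments are identical.
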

\begin{proof}
For any $p \geq 1$, we can evaluate the moment of $X$ directly:
\begin{equation*}
\E[|X|^p] = \int_0^\infty x^p \cdot \lambda \, \exp(-\lambda x) \: dx
= \frac{1}{\lambda^p} \: \int_0^\infty y^p \: \exp(-y) \: dy
= \frac{\Gamma(p+1)}{\lambda^p},
\end{equation*}
where $\Gamma$ is the gamma function, and in the computation above we have used the substitution $y = \lambda x$. It can be easily verified that $\Gamma(p+1) \leq p^p$ for $p \geq 1$, so that:
\begin{equation*}
\E[|X|^p]^{1/p} = \frac{\big(\Gamma(p+1)\big)^{1/p}}{\lambda} \leq \frac{p}{\lambda}.
\end{equation*}
This shows that $X$ is sub-exponential with parameter $1/\lambda$.
\end{proof}

\begin{lemma}\label{Lem:SubExp-Geo}
Let $X$ be a geometric random variable with parameter $q \in (0,1)$, so that:
\begin{equation*}
\P(X = a) = (1-q)^a \, q, \quad a \in \N_0.
\end{equation*}
Then $X$ is sub-exponential with parameter $-2/\log(1-q)$, and the centered random variable $X - (1-q)/q$ is sub-exponential with parameter $-4/\log(1-q)$.
\end{lemma}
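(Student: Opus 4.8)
The plan is to mimic the proof of Lemma~\ref{Lem:SubExp-Exp}: bound the $p$-th absolute moment of the geometric random variable $X$ and extract a clean $\kappa p$ upper bound on $\E[|X|^p]^{1/p}$, then invoke the already-established fact that centering doubles the sub-exponential parameter. The most direct route is to \emph{dominate $X$ by an exponential random variable}. Concretely, observe that for the geometric law $\P(X = a) = (1-q)^a q$ we have, for any $a \in \N_0$ and any $x \in [a, a+1)$, the tail bound $\P(X \geq a) = (1-q)^a$. Setting $\lambda = -\log(1-q) > 0$, an exponential random variable $Y$ with rate $\lambda$ satisfies $\P(Y \geq a) = e^{-\lambda a} = (1-q)^a = \P(X \geq a)$ at integer points, and more generally $\P(X \geq x) = \P(X \geq \lceil x \rceil) = (1-q)^{\lceil x \rceil} \leq (1-q)^{x} = \P(Y \geq x)$ for all $x \geq 0$. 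Hence $X$ is stochastically dominated by $Y$, so $\E[|X|^p] = \E[X^p] \leq \E[Y^p]$ for every $p \geq 1$.

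Given the stochastic domination, Lemma~\ref{Lem:SubExp-Exp} applied to $Y$ (with $\E[Y] = 1/\lambda$) yields $\E[X^p]^{1/p} \leq \E[Y^p]^{1/p} \leq p/\lambda = -p/\log(1-q)$, which is exactly the statement that $X$ is sub-exponential with parameter $-1/\log(1-q)$. Wait — the lemma claims parameter $-2/\log(1-q)$; I would simply use the weaker (hence still valid) bound, or note that $-1/\log(1-q) \leq -2/\log(1-q)$ since $\log(1-q) < 0$, so sub-exponentiality with the smaller parameter implies it with the larger one and the stated constant is safe. Then, since $\E[X] = (1-q)/q$ is finite, the general centering remark in the preamble to Theorem~\ref{Thm:ConcIneqSubExp} gives that $X - (1-q)/q$ is sub-exponential with parameter twice that of $X$, i.e.\ $-4/\log(1-q)$, as claimed.

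An alternative, fully self-contained route avoiding the domination trick is to estimate $\E[X^p] = q\sum_{a=0}^\infty a^p (1-q)^a$ directly by comparing the sum to the integral $\int_0^\infty x^p (1-q)^x\,dx = \Gamma(p+1)/\lambda^{p+1}$, paying a harmless constant factor for the sum-to-integral passage and for the factor $q$ versus $\lambda = -\log(1-q) \geq q$; this also lands at a bound of the form $\E[X^p]^{1/p} \leq Cp/\lambda$ with an absolute constant $C$, and the factor $2$ in the stated parameter absorbs $C$ for all $q$. I expect the main obstacle to be purely bookkeeping: making sure the constants genuinely work \emph{uniformly in $q \in (0,1)$}, in particular handling the regime $q \to 1$ (where $\lambda \to \infty$ and $X$ is nearly degenerate at $0$) and $q \to 0$ (where $\lambda \sim q$ and the geometric and exponential comparisons are tightest). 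The stochastic domination argument handles both regimes cleanly in one stroke, so that is the version I would write up.
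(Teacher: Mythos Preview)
Your proposal is correct and takes a genuinely different route from the paper. The paper proceeds via the direct sum-to-integral comparison you sketch as your ``alternative'': it studies $f(x) = x^p(1-q)^x$, locates its peak at $\lambda = -p/\log(1-q)$, bounds $\sum_{a\geq 0} f(a) \leq \int_0^\infty f(x)\,dx + 2f(\lambda)$ by monotonicity on either side of the peak, evaluates the integral via the gamma function, and after bookkeeping with the factor $q$ versus $-\log(1-q)$ arrives at exactly the constant $-2/\log(1-q)$. Your stochastic-domination argument is both shorter and sharper: by placing $X$ below an exponential with rate $-\log(1-q)$ you import Lemma~\ref{Lem:SubExp-Exp} wholesale and obtain the better parameter $-1/\log(1-q)$, so the paper's stated constant follows a fortiori. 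The paper's route is more self-contained and makes visible where the factor of $2$ enters (the peak correction $2f(\lambda)$), whereas your route explains conceptually why the geometric case should be no worse than the exponential one and shows that the paper's constant is in fact loose by a factor of $2$.
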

\begin{proof}
Fix $p \geq 1$, and consider the function $f \colon \R_0 \to \R_0$, $f(x) = x^p (1-q)^x$. One can easily verify that $f$ is increasing for $0 \leq x \leq \lambda$ and decreasing on $x \geq \lambda$, where $\lambda = -p/\log(1-q)$. In particular, for all $x \in \R_0$ we have $f(x) \leq f(\lambda)$, and:
\begin{equation*}
\begin{split}
f(\lambda) &= \lambda^p (1-q)^\lambda
= \left( \frac{p}{-\log (1-q)} \cdot (1-q)^{-1/\log(1-q)} \right)^p
= \left( \frac{p}{-e \cdot \log (1-q)} \right)^p.
\end{split}
\end{equation*}

Now note that for $0 \leq a \leq \lfloor \lambda \rfloor - 1$ we have $f(a) \leq \int_a^{a+1} f(x) \: dx$, and for $a \geq \lceil \lambda \rceil + 1$ we have $f(a) \leq \int_{a-1}^a f(x) \: dx$. Thus, we can make 
the estimate:
\begin{equation*}
\begin{split}
\sum_{a=0}^\infty f(a)
&= \sum_{a=0}^{\lfloor \lambda \rfloor - 1} f(a) + \sum_{a=\lfloor \lambda \rfloor}^{\lceil \lambda \rceil} f(a) + \sum_{a = \lceil \lambda \rceil + 1}^\infty f(a) \\
&\leq \int_0^{\lfloor \lambda \rfloor} f(x) \: dx + 2f(\lambda) + \int_{\lceil \lambda \rceil}^\infty f(x) \: dx \\
&\leq \int_0^\infty f(x) \: dx + 2f(\lambda).
\end{split}
\end{equation*}
Using the substitution $y = -x \log(1-q)$, the integral can be calculated as:
\begin{equation*}
\begin{split}
\int_0^\infty f(x) \: dx
&= \int_0^\infty x^p \exp\left(x \cdot \log (1-q) \right) \: dx
= \frac{1}{(-\log(1-q))^{p+1}} \: \int_0^\infty y^p \: \exp(-y) \: dy \\
&= \frac{\Gamma(p+1)}{(-\log(1-q))^{p+1}}
\leq \frac{p^p}{(-\log(1-q))^{p+1}},
\end{split}
\end{equation*}
where in the last step we have again used the relation $\Gamma(p+1) \leq p^p$.
We use the result above, along with the expression of $f(\lambda)$, to bound the moment of $X$:
\begin{equation*}
\begin{split}
\E[|X|^p] &= \sum_{a=0}^\infty a^p \cdot (1-q)^a \: q
= q \: \sum_{a=0}^\infty f(a) \\
&\leq q \: \int_0^\infty f(x) \: dx + 2q \: f(\lambda) \\
&\leq \left(\frac{q^{1/p} \: p}{(-\log(1-q))^{1+1/p}} \right)^p + \left( \frac{(2q)^{1/p} \: p}{-e \cdot \log (1-q)} \right)^p \\
&\leq \left( \frac{q^{1/p} \: p}{(-\log(1-q))^{1+1/p}} + \frac{(2q)^{1/p} \: p}{-e \cdot \log (1-q)} \right)^p,
\end{split}
\end{equation*}
where in the last step we have used the fact that $x^p + y^p \leq (x+y)^p$ for $x,y \geq 0$ and $p \geq 1$. This gives us
\begin{equation*}
\begin{split}
\frac{1}{p} \: \E[|X|^p]^{1/p}
&\leq \frac{q^{1/p}}{(-\log(1-q))^{1+1/p}} + \frac{2^{1/p} \: q^{1/p}}{-e \cdot \log (1-q)} \\
&= \frac{1}{-\log(1-q)} \: \left(\left(\frac{q}{-\log(1-q)}\right)^{1/p} + \frac{2^{1/p} \: q^{1/p}}{e}\right).
\end{split}
\end{equation*}
Next, note that $q \leq -\log(1-q)$ for $0 < q < 1$, so $(-q/\log(1-q))^{1/p} \leq 1$. Moreover, $(2q)^{1/p} \leq 2^{1/p} \leq 2$. Therefore, for any $p \geq 1$, we have:
\begin{equation*}
\begin{split}
\frac{1}{p} \: \E[|X|^p]^{1/p}
&\leq \frac{1}{-\log(1-q)} \: \left(1 + \frac{2}{e}\right)
< \frac{2}{-\log(1-q)}.
\end{split}
\end{equation*}
We conclude that $X$ is sub-exponential with parameter $-2/\log(1-q)$.
\end{proof}

%--------------------
\subsubsection{Bound on the inverses of diagonally-dominant matrices}

An $n\times n$ real matrix $J$ is {\em diagonally dominant} if:
\begin{equation*}
\Delta_i(J) := |J_{ii}| - \sum_{j \neq i} |J_{ij}| \geq 0, \quad \text{for } i = 1,\dots,n.
\end{equation*}
We say that $J$ is {\em diagonally balanced} if $\Delta_i(J) = 0$ for $i = 1,\dots,n$. We have the following bound from~\cite{HLW} on the inverses of diagonally dominant matrices. This bound is independent of $\Delta_i$, so it is also applicable to diagonally balanced matrices. We will use this result in the proofs of Theorems~\ref{Thm:ConsistencyCont} and~\ref{Thm:ConsistencyInfiniteDisc}.

\begin{theorem}[{\cite[Theorem~1.1]{HLW}}]\label{Thm:Main}
Let $n \geq 3$. For any symmetric diagonally dominant matrix $J$ with $J_{ij} \geq \ell > 0$, we have the inequality:
\begin{equation*}
\|J^{-1}\|_\infty \leq \frac{3n-4}{2\ell(n-2)(n-1)}.
\end{equation*}
\end{theorem}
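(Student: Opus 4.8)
The plan is to dualize the bound into a lower estimate for $J$ acting between $\ell_\infty$ spaces, and then to prove that estimate by a case analysis steered by the (explicitly computable) extremal configuration. First I would record that $J$ is invertible: since $J$ is symmetric, diagonally dominant, and $J_{ii} \ge \sum_{j \ne i} J_{ij} \ge (n-1)\ell > 0$, it is positive semidefinite; writing $J = \ell\big((n-2)I + \mathbf{1}\mathbf{1}^\top\big) + E$ one checks directly that $E$ is again symmetric and diagonally dominant with nonnegative off-diagonal entries, hence $E \succeq 0$ and $J \succeq \ell(n-2)I \succ 0$. Using $\|J^{-1}\|_\infty = \big(\min_{x \ne 0} \|Jx\|_\infty/\|x\|_\infty\big)^{-1}$, the claim becomes
\[
\|Jx\|_\infty \;\ge\; \frac{2\ell(n-1)(n-2)}{3n-4}\,\|x\|_\infty \qquad \text{for all } x \in \R^n .
\]
By homogeneity we may take $\|x\|_\infty = 1$, and after replacing $x$ by $-x$ if needed we may assume $\max_i x_i = x_1 = 1$; put $m = \min_i x_i \in [-1,1]$. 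If $m \ge 0$ the bound already follows from $(Jx)_1 \ge J_{11} \ge (n-1)\ell$, so assume $m < 0$.

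The two basic estimates come from writing $J_{ii} = \Delta_i + \sum_{j \ne i} J_{ij}$ with $\Delta_i \ge 0$. From the first row, $(Jx)_1 = \Delta_1 + \sum_{j \ne 1} J_{1j}(1 + x_j) \ge \ell \sum_{j \ne 1}(1 + x_j)$, which is large unless the coordinates of $x$ are heavily concentrated near $-1$. From a row $k$ with $x_k = m$, $-(Jx)_k = \Delta_k(-m) + \sum_{j \ne k} J_{kj}(-m - x_j)$; splitting the indices $j$ according to the sign of $-m - x_j$ and using $J_{kj} \ge \ell$ on the nonnegative part yields a bound that is large precisely when many coordinates sit near $m$. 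The proof then plays these two estimates against each other through a case split on how negative $\sum_j x_j$ is (equivalently, on how close $m$ is to $-1$ and how many coordinates are near $m$) and optimizes the resulting expressions. Sharpness of the constant is witnessed by $J = \ell\big((n-2)I + \mathbf{1}\mathbf{1}^\top\big)$, whose inverse $\frac{1}{\ell(n-2)}\big(I - \frac{1}{2n-2}\mathbf{1}\mathbf{1}^\top\big)$ has every row of $\ell_1$-norm exactly $\frac{3n-4}{2\ell(n-1)(n-2)}$; the associated worst-case vector is $x = \big(1, -\tfrac{n}{3n-4}, \dots, -\tfrac{n}{3n-4}\big)$ up to permutation and sign, which should pin down the thresholds in the case split.

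I expect the main obstacle to be getting the \emph{exact} constant $\frac{3n-4}{2\ell(n-1)(n-2)}$ rather than one off by a fixed factor: a crude two-case split only produces something of the order $\tfrac12\ell(n-2)$, so one must retain the dominance slacks $\Delta_i$, must not discard the term $(n-2)(-m)$ in the second estimate, and must deal carefully with the ``intermediate'' coordinates $x_j \in (m,1)$ — presumably through an extra sub-case counting how many coordinates are very negative and balancing the two bounds at the extremal $x$ above. Everything else (the positive-definiteness check, the reduction to the operator-norm inequality, and each individual row estimate) is routine.
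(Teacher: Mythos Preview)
The paper does not prove this theorem at all: it is quoted verbatim from the companion paper \cite{HLW} (Hillar--Lin--Wibisono) and used as a black box in the consistency proofs for the continuous and infinite-discrete models. There is therefore no ``paper's own proof'' to compare your proposal against.

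As for the proposal itself: the overall strategy (reduce to a lower bound on $\|Jx\|_\infty/\|x\|_\infty$, normalize, and exploit the row identities $J_{ii} = \Delta_i + \sum_{j\ne i} J_{ij}$) is sound, and your identification of the extremal pair --- the matrix $J = \ell\big((n-2)I + \mathbf{1}\mathbf{1}^\top\big)$ together with the vector $x = (1,-\tfrac{n}{3n-4},\dots,-\tfrac{n}{3n-4})$ --- is correct and useful for guiding the case split. However, as you yourself flag, the sketch stops short of an argument that actually achieves the sharp constant: the two row estimates you write down, even if combined optimally over a partition of the coordinates by sign, lose a constant factor because on the row $k$ with $x_k=m$ you only use $J_{kj}\ge\ell$ on the ``favorable'' indices and have no matching upper bound on $J_{kj}$ for the unfavorable ones. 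Getting the exact $\tfrac{3n-4}{2\ell(n-1)(n-2)}$ requires a more careful accounting that couples the rows (or, equivalently, works directly with the inverse via the decomposition $J = \ell((n-2)I+\mathbf{1}\mathbf{1}^\top)+E$ and a perturbation/monotonicity argument), rather than bounding each $(Jx)_i$ in isolation. If you want the full argument, you should consult \cite{HLW} directly.
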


%--------------------
\subsection{Proofs for the finite discrete weighted graphs}
\label{Sec:ProofFiniteDisc}

In this section we present the proofs of the results presented in Section~\ref{Sec:FiniteDiscrete}.

%--------------------
\subsubsection{Proof of Theorem~\ref{Thm:GraphicalFiniteDisc}}

We first prove the necessity of~\eqref{Eq:GraphicalFiniteDisc}. Suppose $\mathbf{d} = (d_1,\dots,d_n)$ is the degree sequence of a graph $G$ with edge weights $a_{ij} \in S$. Then $\sum_{i=1}^n d_i = 2\sum_{(i,j)} a_{ij}$ is even. Moreover, for each $1 \leq k \leq n$, $\sum_{i=1}^k d_i$ counts the total edge weights coming out from the vertices $1,\dots,k$. The total edge weights from these $k$ vertices to themselves is at most $(r-1)k(k-1)$, and for each vertex $j \notin \{1,\dots,k\}$, the total edge weights from these $k$ vertices to vertex $j$ is at most $\min\{d_j,(r-1)k\}$, so by summing over $j \notin \{1,\dots,k\}$ we get~\eqref{Eq:GraphicalFiniteDisc}.

To prove the sufficiency of~\eqref{Eq:GraphicalFiniteDisc} we use induction on $s := \sum_{i=1}^n d_i$. The base case $s = 0$ is trivial. Assume the statement holds for $s-2$, and suppose we have a sequence $\mathbf{d}$ with $d_1 \geq d_2 \geq \dots \geq d_n$ satisfying~\eqref{Eq:GraphicalFiniteDisc} with $\sum_{i=1}^n d_i = s$. Without loss of generality we may assume $d_n \geq 1$, for otherwise we can proceed with only the nonzero elements of $\mathbf{d}$. Let $1 \leq t \leq n-1$ be the smallest index such that $d_t > d_{t+1}$, with $t = n-1$ if $d_1 = \cdots = d_n$. Define $\mathbf{d}' = (d_1,\dots,d_{t-1},d_t-1,d_{t+1},\dots,d_{n-1},d_n-1)$, so we have $d_1' = \cdots = d_{t-1}' > d_t' \geq d_{t+1}' \geq \cdots \geq d_{n-1}' > d_n'$ and $\sum_{i=1}^n d_i' = s-2$.

We will show that $\mathbf{d}'$ satisfies~\eqref{Eq:GraphicalFiniteDisc}. By the inductive hypothesis, this means $\mathbf {d}'$ is the degree sequence of a graph $G'$ with edge weights $a_{ij}' \in \{0,1,\dots,r-1\}$. We now attempt to modify $G'$ to obtain a graph $G$ whose degree sequence is equal to $\mathbf{d}$. If the weight $a_{tn}'$ of the edge $(t,n)$ is less than $r-1$, then we can obtain $G$ by increasing $a_{tn}'$ by $1$, since the degree of vertex $t$ is now $d_t'+1 = d_t$, and the degree of vertex $n$ is now $d_n'+1 = d_n$. Otherwise, suppose $a_{tn}' = r-1$. Since $d_t' = d_1'-1$, there exists a vertex $u \neq n$ such that $a_{tu}' < r-1$. Since $d_u' > d_n'$, there exists another vertex $v$ such that $a_{uv}' > a_{vn}'$. Then we can obtain the graph $G$ by increasing $a_{tu}'$ and $a_{vn}'$ by $1$ and reducing $a_{uv}'$ by $1$, so that now the degrees of vertices $t$ and $n$ are each increased by $1$, and the degrees of vertices $u$ and $v$ are preserved.

It now remains to show that $\mathbf{d}'$ satisfies~\eqref{Eq:GraphicalFiniteDisc}. We divide the proof into several cases for different values of $k$. We will repeatedly use the fact that $\mathbf{d}$ satisfies~\eqref{Eq:GraphicalFiniteDisc}, as well as the inequality $\min\{a,b\}-1 \leq \min\{a-1,b\}$.
\begin{enumerate}
  \item For $k = n$:
\begin{equation*}
\sum_{i=1}^n d_i' = \sum_{i=1}^n d_i-2 \leq (r-1)n(n-1)-2 < (r-1)n(n-1).
\end{equation*}
  \item For $t \leq k \leq n-1$:
\begin{equation*}
\begin{split}
\sum_{i=1}^k d_i' &= \sum_{i=1}^k d_i-1 \leq (r-1)k(k-1) + \sum_{j=k+1}^n \min\{d_j,(r-1)k\} - 1 \\
&\leq (r-1)k(k-1) + \sum_{j=k+1}^{n-1} \min\{d_j,(r-1)k\} + \min\{d_n-1,(r-1)k\} \\
&= (r-1)k(k-1) + \sum_{j=k+1}^n \min\{d_j',(r-1)k\}.
\end{split}
\end{equation*}
  \item For $1 \leq k \leq t-1$: first suppose $d_n \geq 1+(r-1)k$. Then for all $j$ we have:
\begin{equation*}
\min\{d_j',(r-1)k\} = \min\{d_j,(r-1)k\} = (r-1)k,
\end{equation*}  
so that:
\begin{equation*}
\begin{split}
\sum_{i=1}^k d_i' &= \sum_{i=1}^k d_i \leq (r-1)k(k-1) + \sum_{j=k+1}^n \min\{d_j,(r-1)k\} \\
&= (r-1)k(k-1) + \sum_{j=k+1}^n \min\{d_j',(r-1)k\}.
\end{split}
\end{equation*}
  \item For $1 \leq k \leq t-1$: suppose $d_1 \geq 1 + (r-1)k$, and $d_n \leq (r-1)k$. We claim that $\mathbf{d}$ satisfies~\eqref{Eq:GraphicalFiniteDisc} at $k$ with a strict inequality. If this claim is true, then, since $d_t = d_1$ and $\min\{d_t',(r-1)k\} = \min\{d_t,(r-1)k\} = (r-1)k$, it follows that:
\begin{equation*}
\begin{split}
\sum_{i=1}^k d_i' &= \sum_{i=1}^k d_i
\leq (r-1)k(k-1) + \sum_{j=k+1}^n \min\{d_j,(r-1)k\}-1 \\
&= (r-1)k(k-1) + \sum_{j=k+1}^{n-1} \min\{d_j',(r-1)k\} + \min\{d_n,(r-1)k\}-1 \\
&\leq (r-1)k(k-1) + \sum_{j=k+1}^{n-1} \min\{d_j',(r-1)k\} + \min\{d_n-1,(r-1)k\} \\
&= (r-1)k(k-1) + \sum_{j=k+1}^n \min\{d_j',(r-1)k\}.
\end{split}
\end{equation*}
Now to prove the claim, suppose the contrary that $\mathbf{d}$ satisfies~\eqref{Eq:GraphicalFiniteDisc} at $k$ with equality. Let $t+1 \leq u \leq n$ be the smallest integer such that $d_u \leq (r-1)k$. Our assumption implies:
\begin{equation*}
\begin{split}
k d_k &= \sum_{i=1}^k d_i = (r-1)k(k-1) + \sum_{j=k+1}^n \min\{d_j,(r-1)k\} \\
&\geq (r-1)k(k-1) + (u-k-1)(r-1)k + \sum_{j=u}^n d_j \\
&= (r-1)k(u-2) + \sum_{j=u}^n d_j.
\end{split}
\end{equation*}
Therefore, since $d_{k+1} = d_k = d_1$, we have:
\begin{equation*}
\begin{split}
\sum_{i=1}^{k+1} d_i &= (k+1)d_k
\geq (r-1)(k+1)(u-2) + \frac{k+1}{k} \sum_{j=u}^n d_j \\
&> (r-1)(k+1)k + (r-1)(k+1)(u-k-2) + \sum_{j=u}^n d_j \\
&\geq (r-1)(k+1)k + \sum_{j=k+2}^n \min\{d_j, (r-1)(k+1)\},
\end{split}
\end{equation*}
which contradicts the fact that $\mathbf{d}$ satisfies~\eqref{Eq:GraphicalFiniteDisc} at $k+1$. Thus, we have proved that $\mathbf{d}$ satisfies~\eqref{Eq:GraphicalFiniteDisc} at $k$ with a strict inequality.

  \item For $1 \leq k \leq t-1$: suppose $d_1 \leq (r-1)k$. In particular, we have $\min\{d_j,(r-1)k\} = d_j$ and $\min\{d_j',(r-1)k\} = d_j'$ for all $j$. First, if we have:
\begin{equation}\label{Eq:ErdosGallaiProofCond}
d_{k+2} + \cdots + d_n \geq 2,
\end{equation}
then we are done, since:
\begin{equation*}
\begin{split}
\sum_{i=1}^k d_i' &= \sum_{i=1}^k d_i
= (k-1)d_1 + d_{k+1} \\
&\leq (r-1)k(k-1) + d_{k+1} + d_{k+2} + \cdots + d_n - 2 \\
&= (r-1)k(k-1) + \sum_{j=k+1}^n d_j' \\
&= (r-1)k(k-1) + \sum_{j=k+1}^n \min\{d_j',(r-1)k\}.
\end{split}
\end{equation*}
Condition~\eqref{Eq:ErdosGallaiProofCond} is obvious if $d_n \geq 2$ or $k+2 \leq n-1$ (since there are $n-k-1$ terms in the summation and each term is at least $1$). Otherwise, assume $k+2 \geq n$ and $d_n = 1$, so in particular, we have $k = n-2$ (since $k \leq t-1 \leq n-2$), $t = n-1$, and $d_1 \leq (r-1)(n-2)$. Note that we cannot have $d_1 = (r-1)(n-2)$, for then $\sum_{i=1}^n d_i = (n-1)d_1 + d_n = (r-1)(n-1)(n-2) + 1$ would be odd, so we must have $d_1 < (r-1)(n-2)$. Similarly, $n$ must be even, for otherwise $\sum_{i=1}^n d_i = (n-1)d_1 + 1$ would be odd. Thus, since $1 \leq d_1 < (r-1)(n-2)$ we must have $n \geq 4$. Therefore, it follows that:
\begin{equation*}
\begin{split}
\sum_{i=1}^k d_i' &= (n-2)d_1 = (n-3)d_1 + d_{n-1} \\
&\leq (r-1)(n-2)(n-3) - (n-3) + d_{n-1} \\
&\leq (r-1)(n-2)(n-3) + (d_{n-1}-1) + (d_n-1) \\
&= (r-1)(n-2)(n-3) + \sum_{j=k+1}^n \min\{d_j',(r-1)k\}.
\end{split}
\end{equation*}
\end{enumerate}
This shows that $\mathbf{d}'$ satisfies~\eqref{Eq:GraphicalFiniteDisc} and finishes the proof of Theorem~\ref{Thm:GraphicalFiniteDisc}.

%--------------------
\subsubsection{Proof of Theorem~\ref{Thm:MLEAlgFiniteDisc}}

We follow the outline of the proof of~\cite[Theorem~1.5]{Chatterjee}, omitting straightforward verifications. We first present the following properties of the mean function $\mu(t)$ and the Jacobian matrix of the function $\varphi$~\eqref{Eq:VarphiFiniteDisc}. We then combine these results at the end of this section into a proof of Theorem~\ref{Thm:MLEAlgFiniteDisc}.

\begin{lemma}\label{Lem:PropertyMeanFunction}
The mean function $\mu(t)$ is positive and strictly decreasing, with $\mu(-t) + \mu(t) = r-1$ for all $t \in \R$, and $\mu(t) \to 0$ as $t \to \infty$. Its derivative $\mu'(t)$ is increasing for $t \geq 0$, with the properties that $\mu'(t) < 0$, $\mu'(t) = \mu'(-t)$ for all $t \in \R$, and $\mu'(0) = -(r^2-1)/12$.
\end{lemma}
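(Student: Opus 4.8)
The plan is to dispatch the elementary assertions quickly and then concentrate on the monotonicity of $\mu'$, which is the only substantive point.

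\emph{Properties of $\mu$.} Positivity is immediate from $\mu(t) = \big(\sum_{a=1}^{r-1} a\,e^{-at}\big)\big/\big(\sum_{a=0}^{r-1} e^{-at}\big) > 0$. The identity $\mu(t)+\mu(-t)=r-1$ follows from the substitution $a\mapsto r-1-a$: writing $\sum_a a\,e^{at} = e^{(r-1)t}\sum_b (r-1-b)e^{-bt}$ and $\sum_a e^{at} = e^{(r-1)t}\sum_b e^{-bt}$ gives $\mu(-t) = (r-1)-\mu(t)$. The limit $\mu(t)\to 0$ is read off from the closed form $\mu(t) = \frac1{e^t-1} - \frac{r}{e^{rt}-1} = g(t) - r\,g(rt)$ with $g(s) := (e^s-1)^{-1}$, since $g(s)\to 0$ as $s\to\infty$. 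Strict monotonicity of $\mu$ and $\mu'(t) < 0$ come from the exponential-family identity $\mu'(t) = -\mathrm{Var}_t(A)$, where $A$ has the nondegenerate (as $r\ge 2$) law $p_t(a)\propto e^{-at}$ on $\{0,\dots,r-1\}$, so $\mathrm{Var}_t(A) > 0$. Differentiating $\mu(t)+\mu(-t)=r-1$ gives $\mu'(t)=\mu'(-t)$, and at $t=0$ the distribution is uniform, so $\mu'(0) = -\mathrm{Var}_0(A) = -(r^2-1)/12$ (equivalently, let $t\to 0$ in $\mu'(t) = g'(t) - r^2 g'(rt)$ using $g'(s) = -s^{-2} + \tfrac1{12} + O(s^2)$).

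\emph{Monotonicity of $\mu'$, reduced to an $r$-free statement.} It remains to show $\mu''(t)\ge 0$ for $t\ge 0$. Using $g(s) = \sum_{k\ge1} e^{-ks}$ for $s>0$, one gets $\mu''(t) = g''(t) - r^3 g''(rt)$ with $g''(s) = \sum_{k\ge1}k^2 e^{-ks} = \frac{e^s(e^s+1)}{(e^s-1)^3}$, so the claim is $g''(t)\ge r^3 g''(rt)$ for $t>0$. I would substitute $t = 2w$ ($w>0$): from $e^{2w}-1 = 2e^w\sinh w$ and $e^{2w}+1 = 2e^w\cosh w$ one computes $g''(2w) = \frac{\cosh w}{4\sinh^3 w}$, and analogously at $rt$, so the inequality becomes
\[
\frac{\cosh w}{\sinh^3 w} \;\ge\; r^3\,\frac{\cosh(rw)}{\sinh^3(rw)}, \qquad\text{i.e.}\qquad \Phi(rw)\ge\Phi(w), \quad \Phi(w):=\frac{\sinh^3 w}{w^3\cosh w}.
\]
Since $rw\ge w$ for $r\ge 2$, it now suffices to prove that $\Phi$ is nondecreasing on $(0,\infty)$.

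\emph{Monotonicity of $\Phi$, and the main obstacle.} We have $(\log\Phi)'(w) = 3\coth w - \tanh w - 3/w$, so it is enough that $f(w) := 3w\coth w - w\tanh w \ge 3$ on $(0,\infty)$; since $f(0^+)=3$ it suffices that $f$ is nondecreasing. Clearing denominators, $f'(w)$ has the same sign as $h(2w)$, where $h(s) := 4\sinh s + \sinh 2s - 2s - 4s\cosh s$, and $h(0)=0$ while $h'(s) = 2\cosh 2s - 2 - 4s\sinh s = 4\sinh s(\sinh s - s) > 0$ for $s>0$ (using $\cosh 2s - 1 = 2\sinh^2 s$), so $h(s)>0$ for $s>0$. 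Chaining the reductions gives $\mu''(t)\ge 0$ on $[0,\infty)$. The one real difficulty is exactly this monotonicity claim: naive AM--GM estimates on $1 + e^{-t} + \cdots + e^{-(r-1)t}$ are too lossy near $t=0$, where $\mu''(0)=0$ and $g''(t)\ge r^3 g''(rt)$ is tight to third order in $t$; the hyperbolic substitution is the device that collapses the $r$-dependent estimate into the single monotonicity statement for $\Phi$, which then rests on the one-line fact $h'(s) = 4\sinh s(\sinh s - s) > 0$.
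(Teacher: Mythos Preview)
Your proof is correct. For the elementary assertions (positivity, the symmetry $\mu(t)+\mu(-t)=r-1$, the limit $\mu(t)\to 0$, $\mu'<0$, $\mu'(-t)=\mu'(t)$, and $\mu'(0)=-(r^2-1)/12$) you follow essentially the same route as the paper; your use of the variance identity $\mu'(t)=-\mathrm{Var}_t(A)$ is exactly the Cauchy--Schwarz argument the paper gives, written probabilistically.

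The real difference is in the monotonicity of $\mu'$. The paper's proof of this point consists of the single sentence ``by differentiating the expression~(\ref{Eq:MeanFuncFiniteDiscreteAlt}) twice, one can verify that $\mu''(t)\ge 0$ for $t\ge 0$,'' with no details. You actually supply a complete argument: writing $\mu(t)=g(t)-r\,g(rt)$ with $g(s)=(e^s-1)^{-1}$ reduces the claim to $g''(t)\ge r^3 g''(rt)$; the hyperbolic substitution $t=2w$ and the identity $g''(2w)=\cosh w/(4\sinh^3 w)$ collapse this $r$-dependent inequality into the $r$-free statement that $\Phi(w)=\sinh^3 w/(w^3\cosh w)$ is nondecreasing; and the latter is settled by the clean factorization $h'(s)=4\sinh s\,(\sinh s - s)>0$. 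As you correctly point out, this step is not a triviality---the inequality is tight to third order at $t=0$---so your argument genuinely fills a gap the paper leaves to the reader. The tradeoff is length: the paper's one-line deferral is compact, while your approach is self-contained but requires the hyperbolic reparametrization and the auxiliary function $h$.
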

\begin{proof}
It is clear from~\eqref{Eq:MeanFuncFiniteDiscrete} that $\mu(t)$ is positive. From the alternative representation~\eqref{Eq:MeanFuncFiniteDiscreteAlt} it is easy to see that $\mu(-t) + \mu(t) = r-1$, and $\mu(t) \to 0$ as $t \to \infty$. Differentiating expression~\eqref{Eq:MeanFuncFiniteDiscrete} yields the following formula:
\begin{equation*}
\mu'(t)
= \frac{-(\sum_{a=0}^{r-1} a^2 \, \exp(-at))(\sum_{a=0}^{r-1} \exp(-at)) + (\sum_{a=0}^{r-1} a \: \exp(-at))^2}{(\sum_{a=0}^{r-1} \exp(-at))^2},
\end{equation*}
and substituting $t = 0$ gives us $\mu'(0) = -(r^2-1)/12$. The Cauchy-Schwarz inequality applied to the expression above tells us that $\mu'(t) < 0$, where the inequality is strict because the vectors $(a^2 \exp(-at))_{a=0}^{r-1}$ and $(\exp(-at))_{a=0}^{r-1}$ are not linearly dependent. Thus, $\mu(t)$ is strictly decreasing for all $t \in \R$.

The relation $\mu(-t) + \mu(t) = r-1$ gives us $\mu'(-t) = \mu'(t)$. Furthermore, by differentiating the expression~\eqref{Eq:MeanFuncFiniteDiscreteAlt} twice, one can verify that $\mu''(t) \geq 0$ for $t \geq 0$, which means $\mu'(t)$ is increasing for $t \geq 0$. See also Figure~\ref{Fig:FiniteDiscMean} for the behavior of $\mu(t)$ and $\mu'(t)$ for different values of $r$.
\end{proof}

\begin{lemma}\label{Lem:LowerBoundRatio}
For all $t \in \R$, we have $\frac{\mu'(t)}{\mu(t)} \geq -r+1 + \frac{1}{\sum_{a=0}^{r-1} \exp(at)} > -r+1$.
\end{lemma}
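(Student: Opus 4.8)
The plan is to work directly from the expression for $\mu'(t)$ obtained by differentiating~\eqref{Eq:MeanFuncFiniteDiscrete} (as in the proof of Lemma~\ref{Lem:PropertyMeanFunction}). Abbreviate $S_k = S_k(t) = \sum_{a=0}^{r-1} a^k e^{-at}$ for $k = 0,1,2$, so that $\mu(t) = S_1/S_0$ and $\mu'(t) = (S_1^2 - S_0 S_2)/S_0^2$. Dividing one by the other gives the clean identity
\begin{equation*}
\frac{\mu'(t)}{\mu(t)} = \frac{S_1^2 - S_0 S_2}{S_0 S_1} = \frac{S_1}{S_0} - \frac{S_2}{S_1} = \mu(t) - \frac{S_2}{S_1}.
\end{equation*}
So the whole lemma reduces to controlling the ratio $S_2/S_1$ and then lower-bounding $\mu(t)$ itself.

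First I would bound $S_2/S_1$ from above. Since every index $a$ appearing in these sums lies in $\{0,1,\dots,r-1\}$, we have the elementary inequality $a^2 \le (r-1)a$, and summing against the positive weights $e^{-at}$ gives $S_2 \le (r-1)S_1$, i.e.\ $S_2/S_1 \le r-1$. Substituting into the identity above yields
\begin{equation*}
\frac{\mu'(t)}{\mu(t)} \ge \mu(t) - (r-1).
\end{equation*}

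It then remains to show $\mu(t) \ge 1/\sum_{a=0}^{r-1} e^{at}$. Keeping only the $a = r-1$ term in the numerator of $\mu(t) = S_1/S_0$ gives $\mu(t) \ge (r-1)e^{-(r-1)t}/S_0 \ge e^{-(r-1)t}/S_0$, using $r \ge 2$. Finally, the reindexing $a \mapsto r-1-a$ gives the identity $S_0 = \sum_{a=0}^{r-1} e^{-at} = e^{-(r-1)t}\sum_{a=0}^{r-1} e^{at}$, so $e^{-(r-1)t}/S_0 = 1/\sum_{a=0}^{r-1} e^{at}$. Chaining the inequalities produces $\mu'(t)/\mu(t) \ge \mu(t) - (r-1) \ge -(r-1) + 1/\sum_{a=0}^{r-1} e^{at}$, which is the claim; the trailing strict inequality is immediate because $\sum_{a=0}^{r-1} e^{at}$ is finite and positive.

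There is no genuine obstacle here: the argument is a two-line manipulation once one spots the bound $a^2 \le (r-1)a$ valid on $\{0,\dots,r-1\}$ and the reindexing identity relating $\sum_a e^{at}$ to $\sum_a e^{-at}$. For readers who prefer a probabilistic reading, if $A$ has law $p_a \propto e^{-at}$ on $\{0,\dots,r-1\}$ then $\mu(t) = \E[A]$ and $\mu'(t) = -\operatorname{Var}(A)$, and the steps above become $\operatorname{Var}(A) = \E[A^2] - \E[A]^2 \le (r-1)\E[A] - \E[A]^2$ together with $\E[A] \ge (r-1)\,\P(A = r-1) \ge \P(A = r-1) = 1/\sum_a e^{at}$.
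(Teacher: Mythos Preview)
Your proof is correct and amounts to the same argument as the paper's: both establish $\mu'(t)/\mu(t) \ge \mu(t) - (r-1)$ and then $\mu(t) \ge 1/\sum_{a} e^{at}$. The paper reaches the first inequality by rewriting $\mu$ via the reindexing $a \mapsto r-1-a$ and taking a logarithmic derivative, whereas you get there more directly through the moment identity $\mu'/\mu = S_1/S_0 - S_2/S_1$ and the bound $a^2 \le (r-1)a$; these are algebraically the same step in different clothing.
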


We recall the following definition and result from~\cite{Chatterjee}. Given $\delta > 0$, let $\mathcal{L}_n(\delta)$ denote the set of $n \times n$ matrices $A = (a_{ij})$ with $\|A\|_\infty \leq 1$, $a_{ii} \geq \delta$, and $a_{ij} \leq -\delta/(n-1)$, for each $1 \leq i \neq j \leq n$.

\begin{lemma}[{\cite[Lemma~2.1]{Chatterjee}}]\label{Lem:LnDelta}
If $A,B \in \mathcal{L}_n(\delta)$, then:
\begin{equation*}
\|AB\|_\infty \leq 1-\frac{2(n-2)\delta^2}{(n-1)}.
\end{equation*}
In particular, for $n \geq 3$,
\begin{equation*}
\|AB\|_\infty \leq 1-\delta^2.
\end{equation*}
\end{lemma}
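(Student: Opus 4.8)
The plan is to estimate $\|AB\|_\infty = \max_i \sum_{k=1}^n |(AB)_{ik}|$ one row at a time. Fix a row index $i$, and for each column $k$ decompose $(AB)_{ik} = \sum_j a_{ij} b_{jk}$ into the sum of its nonnegative terms, call it $p_k \ge 0$, and of its negative terms, whose sum is $-q_k$ with $q_k \ge 0$; thus $(AB)_{ik} = p_k - q_k$ and $\sum_j |a_{ij}b_{jk}| = p_k + q_k$. Using $|p_k - q_k| = (p_k+q_k) - 2\min(p_k,q_k)$ and interchanging the order of summation,
\begin{equation*}
\sum_{k=1}^n |(AB)_{ik}| = \sum_j |a_{ij}| \sum_k |b_{jk}| - 2\sum_{k=1}^n \min(p_k,q_k) \leq 1 - 2\sum_{k=1}^n \min(p_k,q_k),
\end{equation*}
where the inequality uses $\sum_k |b_{jk}| \le \|B\|_\infty \le 1$ for each $j$ together with $\sum_j |a_{ij}| \le \|A\|_\infty \le 1$. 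So the lemma reduces to the bound $\sum_k \min(p_k,q_k) \ge (n-2)\delta^2/(n-1)$.

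The heart of the matter is that membership in $\mathcal{L}_n(\delta)$ forces both $A$ and $B$ to have (strictly) positive diagonal entries and (strictly) negative off-diagonal entries, which pins down the sign of every product $a_{ij}b_{jk}$. Write $\varepsilon = \delta/(n-1)$, so $a_{ii},b_{ii} \ge \delta$ and $a_{ij},b_{ij} \le -\varepsilon$ for $i \ne j$. Fix a column $k \ne i$. The term $j=i$ equals $a_{ii}b_{ik}$, which is negative since $b_{ik} < 0$; the term $j=k$ equals $a_{ik}b_{kk}$, which is negative since $a_{ik} < 0$; and for every $j \notin \{i,k\}$ the term $a_{ij}b_{jk}$ is a product of two negative numbers, hence positive. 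Consequently $q_k \ge a_{ii}|b_{ik}| + |a_{ik}|\,b_{kk} \ge 2\delta\varepsilon$ and $p_k = \sum_{j \notin\{i,k\}} |a_{ij}|\,|b_{jk}| \ge (n-2)\varepsilon^2$. Since $2\delta\varepsilon = 2(n-1)\varepsilon^2 \ge (n-2)\varepsilon^2$, we get $\min(p_k,q_k) \ge (n-2)\varepsilon^2$ for each of the $n-1$ columns $k \ne i$ (the column $k=i$ has all terms positive, hence contributes $\min(p_i,q_i)=0$), whence $\sum_k \min(p_k,q_k) \ge (n-1)(n-2)\varepsilon^2 = (n-2)\delta^2/(n-1)$. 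Substituting into the displayed bound yields the first inequality. For the second, note that $2(n-2) \ge n-1$ precisely when $n \ge 3$, so $1 - 2(n-2)\delta^2/(n-1) \le 1 - \delta^2$.

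I expect the only genuine obstacle to be spotting the sign bookkeeping in the middle step: the argument works because, in an off-diagonal column, exactly the two terms arising from the two diagonal entries ($a_{ii}$ and $b_{kk}$) are negative while the remaining $n-2$ are positive, so the cancellation $\min(p_k,q_k)$ is controlled from below by the small but uniform bounds $\delta$ and $\varepsilon$. Everything else is routine; one should just be careful that the reduction in the first step uses only the row/column $\ell_1$ bounds built into $\mathcal{L}_n(\delta)$, and should sanity-check the smallest case $n=3$, where $p_k$ reduces to a single term.
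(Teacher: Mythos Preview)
Your argument is correct. The paper does not supply its own proof of this lemma; it simply quotes the result from \cite[Lemma~2.1]{Chatterjee}, so there is no in-paper proof to compare against. Your approach---splitting each entry $(AB)_{ik}$ into its positive and negative parts, using $|p_k-q_k|=(p_k+q_k)-2\min(p_k,q_k)$, and then exploiting the sign pattern forced by $\mathcal{L}_n(\delta)$ (diagonal entries positive, off-diagonal entries negative) to bound $\min(p_k,q_k)$ from below for each $k\neq i$---is exactly the argument in the original Chatterjee--Diaconis--Sly paper, and all the bookkeeping checks out: for $k\neq i$ the two ``diagonal'' terms $a_{ii}b_{ik}$ and $a_{ik}b_{kk}$ are negative with combined magnitude at least $2\delta\varepsilon=2(n-1)\varepsilon^2\ge(n-2)\varepsilon^2$, while the remaining $n-2$ terms are positive with total at least $(n-2)\varepsilon^2$, giving $\sum_{k\neq i}\min(p_k,q_k)\ge(n-1)(n-2)\varepsilon^2=(n-2)\delta^2/(n-1)$ as required.
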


Given $\theta,\theta' \in \R^n$, let $J(\theta,\theta')$ denote the $n \times n$ matrix whose $(i,j)$-entry is:
\begin{equation}\label{Eq:JacobianMLEDef}
J_{ij}(\theta,\theta') = \int_0^1 \frac{\partial \varphi_i}{\partial \theta_j} (t\theta + (1-t)\theta') \: dt.
\end{equation}

\begin{lemma}\label{Lem:NormJInfinity}
For all $\theta,\theta' \in \R^n$, we have $\|J(\theta,\theta')\|_\infty = 1$.
\end{lemma}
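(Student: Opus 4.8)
The plan is to compute the Jacobian matrix $D\varphi(\mathbf{x})$ of $\varphi$ explicitly, show that each of its rows has $\ell_1$-norm exactly $1$ with a sign pattern that does not depend on $\mathbf{x}$ (strictly positive on the diagonal, strictly negative off the diagonal), and then transfer this to $J(\theta,\theta')$ by observing that each entry of $J$ is an average of entries of Jacobians $D\varphi(t\theta+(1-t)\theta')$ that all share the same sign, so the row-sum identity survives the integration.

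Concretely, write $S_i(\mathbf{x}) = \sum_{k\neq i}\mu(x_i+x_k)$. Differentiating~\eqref{Eq:VarphiFiniteDisc} gives, for $j\neq i$,
\[
\frac{\partial\varphi_i}{\partial x_j}(\mathbf{x}) = \frac{1}{r-1}\cdot\frac{\mu'(x_i+x_j)}{S_i(\mathbf{x})},
\qquad
\frac{\partial\varphi_i}{\partial x_i}(\mathbf{x}) = 1 + \frac{1}{r-1}\cdot\frac{\sum_{k\neq i}\mu'(x_i+x_k)}{S_i(\mathbf{x})}.
\]
By Lemma~\ref{Lem:PropertyMeanFunction}, $\mu > 0$ and $\mu' < 0$, so every off-diagonal entry of $D\varphi(\mathbf{x})$ is strictly negative. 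For the diagonal entry, Lemma~\ref{Lem:LowerBoundRatio} gives $\mu'(t) > -(r-1)\mu(t)$ for every $t\in\R$, hence $\sum_{k\neq i}\mu'(x_i+x_k) > -(r-1)\,S_i(\mathbf{x})$, which forces the diagonal entry to be strictly positive. Summing absolute values along row $i$ produces a telescoping cancellation,
\[
\sum_{j=1}^n \left|\frac{\partial\varphi_i}{\partial x_j}(\mathbf{x})\right|
= \left(1 + \frac{1}{r-1}\cdot\frac{\sum_{k\neq i}\mu'(x_i+x_k)}{S_i(\mathbf{x})}\right) - \frac{1}{r-1}\cdot\frac{\sum_{k\neq i}\mu'(x_i+x_k)}{S_i(\mathbf{x})} = 1,
\]
so $\|D\varphi(\mathbf{x})\|_\infty = 1$ for every $\mathbf{x}\in\R^n$.

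To finish, note that the segment $\{t\theta+(1-t)\theta' : t\in[0,1]\}$ lies in $\R^n$, so for fixed $i$ and $j$ the integrand $[D\varphi(t\theta+(1-t)\theta')]_{ij}$ has constant sign in $t$ (positive if $j=i$, negative if $j\neq i$). Therefore $|J_{ij}(\theta,\theta')| = \int_0^1 \bigl|[D\varphi(t\theta+(1-t)\theta')]_{ij}\bigr|\,dt$; summing over $j$ and interchanging the finite sum with the integral gives $\sum_{j=1}^n |J_{ij}(\theta,\theta')| = \int_0^1 \sum_{j=1}^n |[D\varphi(t\theta+(1-t)\theta')]_{ij}|\,dt = \int_0^1 1\,dt = 1$, and taking the maximum over $i$ yields $\|J(\theta,\theta')\|_\infty = 1$.

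The only point requiring care — and the reason Lemma~\ref{Lem:LowerBoundRatio} is invoked with a \emph{strict} inequality — is establishing that the diagonal entry of $D\varphi(\mathbf{x})$ is genuinely positive for all $\mathbf{x}$, not merely nonnegative; it is this uniform sign-definiteness of all entries that lets the exact row-sum identity pass through the integral defining $J(\theta,\theta')$ intact. Everything else is a routine differentiation and interchange of sum and integral.
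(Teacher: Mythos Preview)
Your proof is correct and follows essentially the same approach as the paper: compute the partial derivatives of $\varphi$, use Lemma~\ref{Lem:PropertyMeanFunction} and Lemma~\ref{Lem:LowerBoundRatio} to establish the sign pattern (strictly positive diagonal, strictly negative off-diagonal), and exploit the resulting row-sum identity to conclude $\|J(\theta,\theta')\|_\infty = 1$. Your treatment of the integration step---arguing that the constant sign of each entry lets the row-sum identity pass through the integral---is slightly more explicit than the paper's, which simply notes that $J_{ii} = 1 + \sum_{j\neq i} J_{ij}$ survives integration and combines this with the established signs; but the two arguments are equivalent.
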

\begin{proof}
The partial derivatives of $\varphi$~\eqref{Eq:VarphiFiniteDisc} are:
\begin{equation}\label{Eq:PartialVarphi}
\frac{\partial \varphi_i(\mathbf{x})}{\partial x_i} = 1 + \frac{1}{(r-1)}\frac{\sum_{j \neq i} \mu'(x_i+x_j)}{\sum_{j \neq i} \mu(x_i+x_j)},
\end{equation}
and for $i \neq j$:
\begin{equation}\label{Eq:PartialVarphiMixed}
\frac{\partial \varphi_i(\mathbf{x})}{\partial x_j} = \frac{1}{(r-1)}\frac{\mu'(x_i+x_j)}{\sum_{k \neq i} \mu(x_i+x_k)} < 0,
\end{equation}
where the last inequality follows from $\mu'(x_i + x_j) < 0$. Using the result of Lemma~\ref{Lem:LowerBoundRatio} and the fact that $\mu$ is positive, we also see that:
\begin{equation*}
\begin{split}
\frac{\partial \varphi_i(\mathbf{x})}{\partial x_i}
&= 1 + \frac{1}{(r-1)} \frac{\sum_{j \neq i} \mu'(x_i+x_j)}{\sum_{j \neq i} \mu(x_i+x_j)}
> 1 + \frac{1}{(r-1)} \frac{\sum_{j \neq i} (-r+1) \mu(x_i+x_j)}{\sum_{j \neq i} \mu(x_i+x_j)}
= 0.
\end{split}
\end{equation*}
Setting $\mathbf{x} = t\theta + (1-t)\theta'$ and integrating over $0 \leq t \leq 1$, we also get that $J_{ij}(\theta,\theta') < 0$ for $i \neq j$, and $J_{ii}(\theta,\theta') = 1 + \sum_{j \neq i} J_{ij}(\theta,\theta') > 0$. This implies $\|J(\theta,\theta')\|_\infty = 1$, as desired.
\end{proof}

\begin{lemma}\label{Lem:JInLnDelta}
Let $\theta,\theta' \in \R^n$ with $\|\theta\|_\infty \leq K$ and $\|\theta'\|_\infty \leq K$ for some $K > 0$. Then $J(\theta,\theta') \in \mathcal{L}_n(\delta)$, where:
\begin{equation}\label{Eq:DeltaBoundJ}
\delta = \frac{1}{(r-1)} \: \min \left\{\frac{\exp(2K)-1}{\exp(2rK)-1}, \; -\frac{\mu'(2K)}{\mu(-2K)} \right\}.
\end{equation}
\end{lemma}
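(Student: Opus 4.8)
The plan is to check the three defining conditions of membership in $\mathcal{L}_n(\delta)$ for the matrix $J = J(\theta,\theta')$ from~\eqref{Eq:JacobianMLEDef}: that $\|J\|_\infty \leq 1$, that $J_{ii} \geq \delta$, and that $J_{ij} \leq -\delta/(n-1)$ for $i \neq j$. The first is already available from Lemma~\ref{Lem:NormJInfinity}, which gives $\|J\|_\infty = 1$ exactly. For the other two, the key preliminary observation is that whenever $t \in [0,1]$ and $\mathbf{x} = t\theta + (1-t)\theta'$, we have $\|\mathbf{x}\|_\infty \leq K$, hence $x_i + x_j \in [-2K, 2K]$ for every $i \neq j$; since each entry $J_{ij}$ is the average over $t \in [0,1]$ of $\partial\varphi_i/\partial x_j$ evaluated at such an $\mathbf{x}$, it suffices to bound the partial derivatives~\eqref{Eq:PartialVarphi}--\eqref{Eq:PartialVarphiMixed} uniformly over arguments in $[-2K, 2K]$.

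For the off-diagonal entries I would work from~\eqref{Eq:PartialVarphiMixed}. By Lemma~\ref{Lem:PropertyMeanFunction}, $\mu'$ is negative, even, and increasing on $[0,\infty)$, so $|\mu'|$ decreases as its argument moves away from $0$; in particular $\mu'(x_i+x_j) \leq \mu'(2K) < 0$ for $|x_i+x_j| \leq 2K$. Likewise $\mu$ is positive and strictly decreasing, so $\sum_{k \neq i}\mu(x_i+x_k) \leq (n-1)\,\mu(-2K)$. Dividing the negative numerator by the smaller positive denominator then gives
\begin{equation*}
\frac{\partial\varphi_i}{\partial x_j} \leq \frac{1}{r-1}\cdot\frac{\mu'(2K)}{(n-1)\,\mu(-2K)} = -\frac{1}{n-1}\cdot\frac{1}{r-1}\cdot\frac{-\mu'(2K)}{\mu(-2K)} \leq -\frac{\delta}{n-1},
\end{equation*}
the last step being the definition of $\delta$; averaging over $t$ yields $J_{ij} \leq -\delta/(n-1)$.

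For the diagonal entries I would combine~\eqref{Eq:PartialVarphi} with Lemma~\ref{Lem:LowerBoundRatio}. For $|t| \leq 2K$ the sum $\sum_{a=0}^{r-1}\exp(at)$ appearing there is at most $\sum_{a=0}^{r-1}\exp(2aK) = (\exp(2rK)-1)/(\exp(2K)-1)$, so Lemma~\ref{Lem:LowerBoundRatio} gives the uniform bound $\mu'(x_i+x_j)/\mu(x_i+x_j) \geq -r+1 + (\exp(2K)-1)/(\exp(2rK)-1)$. Since $\mu > 0$, multiplying by $\mu(x_i+x_j)$ and summing over $j \neq i$ passes this bound through to the ratio $\sum_{j\neq i}\mu'(x_i+x_j)/\sum_{j\neq i}\mu(x_i+x_j)$; substituting into~\eqref{Eq:PartialVarphi} and simplifying leaves $\partial\varphi_i/\partial x_i \geq \frac{1}{r-1}\cdot\frac{\exp(2K)-1}{\exp(2rK)-1} \geq \delta$, and averaging over $t$ gives $J_{ii} \geq \delta$, completing the verification.

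Most of this is routine bookkeeping once the setup is in place; the only point requiring care is matching the two terms inside the $\min$ in~\eqref{Eq:DeltaBoundJ} with their correct sources — the geometric-series term controls the diagonal via Lemma~\ref{Lem:LowerBoundRatio}, and the $-\mu'(2K)/\mu(-2K)$ term controls the off-diagonal via the monotonicity of $\mu$ and $\mu'$ — together with keeping the inequality directions straight when dividing through by the sign-definite quantities $\mu$ and $\mu'$.
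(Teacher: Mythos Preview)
Your proposal is correct and follows essentially the same route as the paper: invoke Lemma~\ref{Lem:NormJInfinity} for $\|J\|_\infty = 1$, bound the off-diagonal partials via the monotonicity of $\mu$ and the evenness/monotonicity of $\mu'$ from Lemma~\ref{Lem:PropertyMeanFunction}, and bound the diagonal partials via Lemma~\ref{Lem:LowerBoundRatio} together with the geometric-series estimate on $\sum_{a=0}^{r-1}\exp(at)$. The only (harmless) imprecision is the phrase ``dividing the negative numerator by the smaller positive denominator'': you are in fact replacing the denominator by its \emph{upper} bound $(n-1)\mu(-2K)$, which, because the numerator is negative, still yields the desired upper bound on the fraction.
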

\begin{proof}
From Lemma~\ref{Lem:NormJInfinity} we already know that $J \equiv J(\theta,\theta')$ satisfies $\|J\|_\infty = 1$, so to show that $J\in \mathcal{L}_n(\delta)$ it remains to show that $J_{ii} \geq \delta$ and $J_{ij} \leq -\delta/(n-1)$ for $i \neq j$. In particular, it suffices to show that for each $0 \leq t \leq 1$ we have $\partial \varphi_i(\mathbf{x})/\partial x_i \geq \delta$ and $\partial \varphi_i(\mathbf{x})/\partial x_j \leq -\delta/(n-1)$, where $\mathbf{x} \equiv \mathbf{x}(t) = t\theta + (1-t)\theta'$.

Fix $0 \leq t \leq 1$. Since $\|\theta\|_\infty \leq K$ and $\|\theta'\|_\infty \leq K$, we also know that $\|\mathbf{x}\|_\infty \leq K$, so $-2K \leq x_i+x_j \leq 2K$ for all $1 \leq i,j \leq n$. Using the properties of $\mu$ and $\mu'$ from Lemma~\ref{Lem:PropertyMeanFunction}, we have:
\begin{equation*}
0 < \mu(2K) \leq \mu(x_i+x_j) \leq \mu(-2K)
\end{equation*}
and
\begin{equation*}
\mu'(0) \leq \mu'(x_i+x_j) \leq \mu'(2K) < 0.
\end{equation*}
Then from~\eqref{Eq:PartialVarphiMixed} and using the definition of $\delta$, it follows that:
\begin{equation*}
\frac{\partial \varphi_i(\mathbf{x})}{\partial x_j} \leq \frac{\mu'(2K)}{(n-1)(r-1)\mu(-2K)} \leq -\frac{\delta}{n-1}.
\end{equation*}
Furthermore, by Lemma~\ref{Lem:LowerBoundRatio} we have:
\begin{equation*}
\frac{\mu'(x_i+x_j)}{\mu(x_i+x_j)} \geq -r+1 + \frac{\exp(x_i+x_j)-1}{\exp(r(x_i+x_j))-1}
\geq -r+1 + \frac{\exp(2K)-1}{\exp(2rK)-1}.
\end{equation*}
So from~\eqref{Eq:PartialVarphi}, we also get:
\begin{equation*}
\begin{split}
\frac{\partial \varphi_i(\mathbf{x})}{\partial x_i}
&\geq 1 + \frac{1}{(r-1)} \frac{\sum_{j \neq i} (-r+1+\frac{\exp(2K)-1}{\exp(2rK)-1}) \mu(x_i+x_j)}{\sum_{j \neq i} \mu(x_i+x_j)}
= \frac{1}{(r-1)} \left( \frac{\exp(2K)-1}{\exp(2rK)-1} \right)
\geq \delta,
\end{split}
\end{equation*}
as required.
\end{proof}

We are now ready to prove Theorem~\ref{Thm:MLEAlgFiniteDisc}.

\begin{proof_of}{Theorem~\ref{Thm:MLEAlgFiniteDisc}}
By the mean-value theorem for vector-valued functions~\cite[p.~341]{Lang}, for any $\theta,\theta' \in \R^n$ we can write:
\begin{equation*}
\varphi(\theta) - \varphi(\theta') = J(\theta, \theta') (\theta-\theta'),
\end{equation*}
where $J(\theta,\theta')$ is the Jacobian matrix defined in~\eqref{Eq:JacobianMLEDef}. Since $\|J(\theta,\theta')\|_\infty = 1$ (Lemma~\ref{Lem:NormJInfinity}), this gives us:
\begin{equation}\label{Eq:FiniteDiscMVTBoundOneIter}
\|\varphi(\theta) - \varphi(\theta')\|_\infty \leq \|\theta-\theta'\|_\infty.
\end{equation}

First suppose there is a solution $\hat \theta$ to the system of equations~\eqref{Eq:MLEEqFiniteDisc}, so $\hat \theta$ is a fixed-point of $\varphi$. Then by setting $\theta = \theta^{(k)}$ and $\theta' = \hat \theta$ to the inequality above, we obtain:
\begin{equation}\label{Eq:VarphiContraction}
\|\theta^{(k+1)}-\hat\theta\|_\infty \leq \|\theta^{(k)}-\hat\theta\|_\infty.
\end{equation}
In particular, this shows that $\|\theta^{(k)}\|_\infty \leq K$ for all $k \in \N_0$, where $K := 2\|\hat\theta\|_\infty + \|\theta^{(0)}\|_\infty$. By Lemma~\ref{Lem:JInLnDelta}, this implies $J(\theta^{(k)},\hat \theta) \in \mathcal{L}_n(\delta)$ for all $k \in \N_0$, where $\delta$ is given by~\eqref{Eq:DeltaBoundJ}. Another application of the mean-value theorem gives us:
\begin{equation*}
\begin{split}
\theta^{(k+2)}-\hat \theta 
&= J(\theta^{(k+1)}, \hat\theta) \: J(\theta^{(k)}, \hat\theta) \: (\theta^{(k)} - \hat\theta),
\end{split}
\end{equation*}
so by Lemma~\ref{Lem:LnDelta}, we have:
\begin{equation*}
\begin{split}
\|\theta^{(k+2)}-\hat \theta\|_\infty
&\leq \|J(\theta^{(k+1)}, \hat\theta) \: J(\theta^{(k)}, \hat\theta)\|_\infty \: \|\theta^{(k)} - \hat\theta\|_\infty
\leq \left(1-\delta^2\right) \|\theta^{(k)} - \hat\theta\|_\infty.
\end{split}
\end{equation*}
Unrolling the recursive bound above and using~\eqref{Eq:VarphiContraction} gives us:
\begin{equation*}
\|\theta^{(k)}-\hat\theta\|_\infty \leq (1-\delta^2)^{\lfloor k/2 \rfloor} \|\theta^{(0)}-\hat\theta\|_\infty \leq (1-\delta^2)^{(k-1)/2} \|\theta^{(0)}-\hat\theta\|_\infty,
\end{equation*}
which proves~\eqref{Eq:MLERateOfConvFiniteDisc} with $\tau = \sqrt{1-\delta^2}$.

Now suppose the system of equations~\eqref{Eq:MLEEqFiniteDisc} does not have a solution, and suppose the contrary that the sequence $\{\theta^{(k)}\}$ does not have a divergent subsequence. This means $\{\theta^{(k)}\}$ is a bounded sequence, so there exists $K > 0$ such that $\|\theta^{(k)}\|_\infty \leq K$ for all $k \in \N_0$. Then by Lemma~\ref{Lem:JInLnDelta}, $J(\theta^{(k)},\theta^{(k+1)}) \in \mathcal{L}_n(\delta)$ for all $k \in \N_0$, where $\delta$ is given by~\eqref{Eq:DeltaBoundJ}. In particular, by the mean value theorem and Lemma~\ref{Lem:NormJInfinity}, we get for all $k \in \N_0$ that:
\begin{equation*}
\|\theta^{(k+3)}-\theta^{(k+2)}\|_\infty \leq (1-\delta^2) \|\theta^{(k+1)}-\theta^{(k)}\|_\infty.
\end{equation*}
This implies $\sum_{k=0}^\infty \|\theta^{(k+1)}-\theta^{(k)}\|_\infty < \infty$, which means $\{\theta^{(k)}\}$ is a Cauchy sequence. Thus, the sequence $\{\theta^{(k)}\}$ converges to a limit, say $\hat \theta$, as $k \to \infty$. This limit $\hat \theta$ is necessarily a fixed-point of $\varphi$, as well as a solution to the system of equations~\eqref{Eq:MLEEqFiniteDisc}, contradicting our assumption. Hence we conclude that $\{\theta^{(k)}\}$ must have a divergent subsequence.
\end{proof_of}

%--------------------
\subsection{Proofs for the infinite discrete weighted graphs}
\label{Sec:ProofInfiniteDisc}

In this section we prove the results presented in Section~\ref{Sec:InfiniteDisc}.

%------------------
\subsubsection{Proof of Theorem~\ref{Thm:GraphicalInfiniteDisc}}

Without loss of generality we may assume $d_1 \geq d_2 \geq \dots \geq d_n$, so condition~\eqref{Eq:GraphicalInfiniteDisc} becomes $d_1 \leq \sum_{i=2}^n d_i$. The necessary part is easy: if $(d_1,\dots,d_n)$ is a degree sequence of a graph $G$ with edge weights $a_{ij} \in \N_0$, then $\sum_{i=1}^n d_i = 2\sum_{\{i,j\}} a_{ij}$ is even, and the total weight coming out of vertex $1$ is at most $\sum_{i=2}^n d_i$. For the converse direction, we proceed by induction on $s = \sum_{i=1}^n d_i$. The statement is clearly true for $s = 0$ and $s = 2$. Assume the statement is true for some even $s \in \N$, and suppose we are given $\mathbf{d} = (d_1,\dots,d_n) \in \N_0^n$ with $d_1 \geq \dots \geq d_n$, $\sum_{i=1}^n d_i = s+2$, and $d_1 \leq \sum_{i=2}^n d_i$. Without loss of generality we may assume $d_n \geq 1$, for otherwise we can proceed with only the nonzero elements of $\mathbf{d}$. Let $1 \leq t \leq n-1$ be the smallest index such that $d_t > d_{t+1}$, with $t = n-1$ if $d_1 = \dots = d_n$, and let $\mathbf{d}' = (d_1, \dots, d_{t-1}, d_t-1, d_{t+1}, \dots, d_n-1)$. We will show that $\mathbf{d}'$ is graphic. This will imply that $\mathbf{d}$ is graphic, because if $\mathbf{d}'$ is realized by the graph $G'$ with edge weights $a_{ij}'$, then $\mathbf{d}$ is realized by the graph $G$ with edge weights $a_{tn} = a_{tn}'+1$ and $a_{ij} = a_{ij}'$ otherwise.

Now for $\mathbf{d}' = (d_1', \dots, d_n')$ given above, we have $d_1' \geq \dots \geq d_n'$ and $\sum_{i=1}^n d_i' = \sum_{i=1}^n d_i-2 = s$ is even. So it suffices to show that $d_1' \leq \sum_{i=2}^n d_i'$, for then we can apply the induction hypothesis to conclude that $\mathbf{d}'$ is graphic. If $t = 1$, then $d_1' = d_1-1 \leq \sum_{i=2}^n d_i -1 = \sum_{i=2}^n d_i'$. If $t > 1$ then $d_1 = d_2$, so $d_1 < \sum_{i=2}^n d_i$ since $d_n \geq 1$. In particular, since $\sum_{i=1}^n d_i$ is even, $\sum_{i=2}^n d_i - d_1 = \sum_{i=1}^n d_i - 2d_1$ is also even, hence $\sum_{i=2}^n d_i - d_1 \geq 2$. Therefore, $d_1' = d_1 \leq \sum_{i=2}^n d_i-2 = \sum_{i=2}^n d_i'$. This finishes the proof of Theorem~\ref{Thm:GraphicalInfiniteDisc}.

%-------------------
\subsubsection{Proof of Lemma~\ref{Lem:ConvWInfiniteDisc}}

Clearly $\W \subseteq \W_1$, so $\overline{\conv(\W)} \subseteq \W_1$ since $\W_1$ is closed and convex, by Lemma~\ref{Lem:W-Convex}. Conversely, let $\Q$ denote the set of rational numbers. We will first show that $\W_1 \cap \Q^n \subseteq \conv(\W)$ and then proceed by a limit argument. Let $\mathbf{d} \in \W_1 \cap \Q^n$, so $\mathbf{d} = (d_1, \dots, d_n) \in \Q^n$ with $d_i \geq 0$ and $\max_{1 \leq i \leq n} d_i \leq \frac{1}{2} \sum_{i=1}^n d_i$. Choose $K \in \N$ large enough such that $Kd_i \in \N_0$ for all $i = 1,\dots,n$. Observe that $2K \mathbf{d} = (2Kd_1, \dots, 2Kd_n) \in \N_0^n$ has the property that $\sum_{i=1}^n 2Kd_i \in \N_0$ is even and $\max_{1 \leq i \leq n} 2Kd_i \leq \frac{1}{2} \sum_{i=1}^n 2Kd_i$, so $2K\mathbf{d} \in \W$ by definition. Since $0 = (0,\dots,0) \in \W$ as well, all elements along the segment joining $0$ and $2K\mathbf{d}$ lie in $\conv(\W)$, so in particular, $\mathbf{d} = (2K\mathbf{d})/(2K) \in \conv(\W)$. This shows that $\W_1 \cap \Q^n \subseteq \conv(\W)$, and hence $\overline{\W_1 \cap \Q^n} \subseteq \overline{\conv(\W)}$.

To finish the proof it remains to show that $\overline{\W_1 \cap \Q^n} = \W_1$. On the one hand, we have:
\begin{equation*}
\overline{\W_1 \cap \Q^n} \subseteq \overline{\W_1} \cap \overline{\Q^n} = \W_1 \cap \R_0^n = \W_1. 
\end{equation*}
For the other direction, given $\mathbf{d} \in \W_1$, choose $\mathbf{d}_1, \dots, \mathbf{d}_n \in W_1$ such that $\mathbf{d}, \mathbf{d}_1, \dots, \mathbf{d}_n$ are in general position, so that the convex hull $C$ of $\{\mathbf{d}, \mathbf{d}_1, \dots, \mathbf{d}_n\}$ is full dimensional. This can be done, for instance, by noting that the following $n+1$ points in $\W_1$ are in general position:
\begin{equation*}
\{0, \; \mathbf{e}_1 + \mathbf{e}_2, \; \mathbf{e}_1 + \mathbf{e}_3, \; \cdots, \; \mathbf{e}_1 + \mathbf{e}_n, \; \mathbf{e}_1 + \mathbf{e}_2 + \cdots + \mathbf{e}_n\},
\end{equation*}
where $\mathbf{e}_1, \dots, \mathbf{e}_n$ are the standard basis of $\R^n$. For each $m \in \N$ and $i = 1, \dots, n$, choose $\mathbf{d}_i^{(m)}$ on the line segment between $\mathbf{d}$ and $\mathbf{d}_i$ such that the convex hull $C_m$ of $\{\mathbf{d}, \mathbf{d}_1^{(m)}, \dots, \mathbf{d}_n^{(m)}\}$ is full dimensional and has diameter at most $1/m$. Since $C_m$ is full dimensional we can choose a rational point $\mathbf{r}_m \in C_m \subseteq C \subseteq \W_1$. Thus we have constructed a sequence of rational points $(\mathbf{r}_m)$ in $\W_1$ converging to $\mathbf{d}$, which shows that $\W_1 \subseteq \overline{\W_1 \cap \Q^n}$.

%-------------------
\subsubsection{Proof of Theorem~\ref{Thm:ConsistencyInfiniteDisc}}\label{Sec:ProofConsistencyInfiniteDisc}

We first address the issue of the existence of $\hat \theta$. Recall from the discussion in Section~\ref{Sec:InfiniteDisc} that the MLE $\hat \theta \in \Theta$ exists if and only if $\mathbf{d} \in \M^\circ$. Clearly $\mathbf{d} \in \W$ since $\mathbf{d}$ is the degree sequence of the sampled graph $G$, and $\W \subseteq \conv(\W) = \M$ from Proposition~\ref{Prop:MConvW}. Therefore, the MLE $\hat \theta$ does not exist if and only if $\mathbf{d} \in \partial \M = \M \setminus \M^\circ$, where the boundary $\partial \M$ is explicitly given by:
\begin{equation*}
\partial \M = \left\{ \mathbf{d}' \in \R_0^n \colon \min_{1 \leq i \leq n} d_i' = 0 \; \text{ or } \; \max_{1 \leq i \leq n} d_i' = \frac{1}{2} \sum_{i=1}^n d_i' \right\}.
\end{equation*}
Using union bound and the fact that the edge weights $A_{ij}$ are independent geometric random variables, it follows that:
\begin{equation*}
\begin{split}
\P(d_i = 0 \text{ for some } i)
\leq \sum_{i=1}^n \P(d_i = 0)
&= \sum_{i=1}^n \P(A_{ij} = 0 \text{ for all } j \neq i) \\
&= \sum_{i=1}^n \prod_{j \neq i} \left(1-\exp(-\theta_i-\theta_j)\right)
\leq n\left(1-\exp(-M)\right)^{n-1}.
\end{split}
\end{equation*}
Furthermore, again by union bound, we have:
\begin{equation*}
\P\left(\max_{1 \leq i \leq n} d_i = \frac{1}{2} \sum_{i=1}^n d_i \right)
= \P\left(d_i = \sum_{j \neq i} d_j \text{ for some } i \right)
\leq \sum_{i=1}^n \P\left(d_i = \sum_{j \neq i} d_j\right).
\end{equation*}
Note that we have $d_i = \sum_{j \neq i} d_j$ for some $i$ if and only if the edge weights $A_{jk} = 0$ for all $j,k \neq i$. This occurs with probability:
\begin{equation*}
\P\left(A_{jk} = 0 \text{ for } j,k \neq i \right)
= \prod_{\substack{j,k \neq i\\j \neq k}}\left(1-\exp(-\theta_j-\theta_k)\right)
\leq \left(1-\exp(-M)\right)^{\binom{n-1}{2}}.
\end{equation*}
Therefore, we have the bound:
\begin{equation*}
\begin{split}
\P(\mathbf{d} \in \partial \M)
&\leq \P(d_i = 0 \text{ for some } i) + \P\left(\max_{1 \leq i \leq n} d_i = \frac{1}{2} \sum_{i=1}^n d_i \right) \\
&\leq n\left(1-\exp(-M)\right)^{n-1} + n\left(1-\exp(-M)\right)^{\binom{n-1}{2}} \\
&\leq \frac{1}{n^{k-1}},
\end{split}
\end{equation*}
where the last inequality holds for sufficiently large $n$. This shows that for sufficiently large $n$, the MLE $\hat \theta$ exists with probability at least $1-1/n^{k-1}$.

We now turn to proving the consistency of $\hat \theta$. For the rest of this proof, assume that the MLE $\hat \theta \in \Theta$ exists, which occurs with probability at least $1-1/n^{k-1}$. The proof of the consistency of $\hat \theta$ follows the same outline as in the proof of Theorem~\ref{Thm:ConsistencyCont}. Let $\mathbf{d}^\ast = -\nabla Z(\theta)$ denote the expected degree sequence of the distribution $\P^\ast_\theta$, and recall that the MLE $\hat \theta$ satisfies $\mathbf{d} = -\nabla Z(\hat \theta)$. By the mean value theorem~\cite[p.~341]{Lang}, we can write:
\begin{equation}\label{Eq:Consistency-MVT-Disc}
\mathbf{d} - \mathbf{d}^\ast = \nabla Z(\theta) - \nabla Z(\hat \theta) = J(\theta - \hat\theta),
\end{equation}
where $J$ is the matrix obtained by integrating the Hessian of $Z$ between $\theta$ and $\hat \theta$:
\begin{equation*}
J = \int_0^1 \nabla^2 Z(t\theta + (1-t)\hat \theta) \: dt.
\end{equation*}

Let $0 \leq t \leq 1$, and note that at the point $\xi = t\theta + (1-t) \hat \theta$ the gradient $\nabla Z$ is given by:
\begin{equation*}
\big( \nabla Z(\xi) \big)_i = -\sum_{j \neq i} \frac{1}{\exp(\xi_i+\xi_j)-1}.
\end{equation*}
Thus, the Hessian $\nabla^2 Z$ is:
\begin{equation*}
\big( \nabla^2 Z(\xi) \big)_{ij} = \frac{\exp(\xi_i+\xi_j)}{(\exp(\xi_i+\xi_j)-1)^2}, \quad i \neq j,
\end{equation*}
\begin{equation*}
\big( \nabla^2 Z(\xi) \big)_{ii} = \sum_{j \neq i} \frac{\exp(\xi_i+\xi_j)}{(\exp(\xi_i+\xi_j)-1)^2} = \sum_{j \neq i} \big( \nabla^2 Z(\xi) \big)_{ij}.
\end{equation*}
Since $\theta, \hat \theta \in \Theta$ and we assume $\theta_i+\theta_j \leq M$, for $i \neq j$ we have:
\begin{equation*}
0 < \xi_i + \xi_j \leq \max\{\theta_i + \theta_j, \; \hat \theta_i + \hat \theta_j \} \leq \max\{M, \; 2\|\hat \theta\|_\infty\} \leq M + 2\|\hat \theta\|_\infty.
\end{equation*}
This means $J$ is a symmetric, diagonally dominant matrix with off-diagonal entries bounded below by $\exp(M + 2\|\hat\theta\|_\infty)/(\exp(M + 2\|\hat\theta\|_\infty)-1)^2$, being an average of such matrices. Then by Theorem~\ref{Thm:Main}, we have the bound:
\begin{equation*}
\|J^{-1}\|_\infty
\leq \frac{(3n-4)}{2(n-2)(n-1)} \: \frac{(\exp(M + 2\|\hat\theta\|_\infty)-1)^2}{\exp(M + 2\|\hat\theta\|_\infty)}
\leq \frac{2}{n} \: \frac{(\exp(M + 2\|\hat\theta\|_\infty)-1)^2}{\exp(M + 2\|\hat\theta\|_\infty)},
\end{equation*}
where the second inequality holds for $n \geq 7$. By inverting $J$ in~\eqref{Eq:Consistency-MVT-Disc} and applying the bound on $J^{-1}$ above, we obtain:
\begin{equation}\label{Eq:Consistency-R2}
\|\theta-\hat\theta\|_\infty \leq \|J^{-1}\|_\infty \: \|\mathbf{d} - \mathbf{d}^\ast\|_\infty \leq \frac{2}{n} \: \frac{(\exp(M + 2\|\hat\theta\|_\infty)-1)^2}{\exp(M + 2\|\hat\theta\|_\infty)} \: \|\mathbf{d} - \mathbf{d}^\ast\|_\infty.
\end{equation}

Let $A = (A_{ij})$ denote the edge weights of the sampled graph $G \sim \P^\ast_\theta$, so $d_i = \sum_{j \neq i} A_{ij}$ for $i = 1,\dots,n$. Since $\mathbf{d}^\ast$ is the expected degree sequence from the distribution $\P^\ast_\theta$, we also have $d_i^\ast = \sum_{j \neq i} 1/(\exp(\theta_i+\theta_j)-1)$. Recall that $A_{ij}$ is a geometric random variable with emission probability:
\begin{equation*}
q = 1-\exp(-\theta_i-\theta_j) \geq 1-\exp(-L),
\end{equation*}
so by Lemma~\ref{Lem:SubExp-Geo}, $A_{ij} - 1/(\exp(\theta_i + \theta_j)-1)$ is sub-exponential with parameter $-4/\log(1-q) \leq 4/L$. For each $i = 1,\dots,n$, the random variables $(A_{ij} - 1/(\exp(\theta_i+\theta_j)-1), j \neq i)$ are independent sub-exponential random variables, so we can apply the concentration inequality in Theorem~\ref{Thm:ConcIneqSubExp} with $\kappa = 4/L$ and:
\begin{equation*}
\epsilon = \left(\frac{16k \log n}{\gamma (n-1) L^2} \right)^{1/2}.
\end{equation*}
Assume $n$ is sufficiently large such that $\epsilon/\kappa = \sqrt{k \log n/\gamma (n-1)} \leq 1$. Then by Theorem~\ref{Thm:ConcIneqSubExp}, for each $i = 1,\dots,n$ we have:
\begin{equation*}
\begin{split}
\P\left(|d_i - d_i^\ast| \geq \sqrt{\frac{16 kn \log n}{\gamma L^2}} \right)
&\leq \P\left(|d_i - d_i^\ast| \geq \sqrt{\frac{16 k (n-1) \log n}{\gamma L^2}} \right) \\
&= \P\left(\Bigg|\frac{1}{n-1}\sum_{j \neq i} \left(A_{ij}-\frac{1}{\exp(\theta_i+\theta_j)-1}\right)\Bigg| \geq \sqrt{\frac{16 k \log n}{\gamma (n-1) L^2}} \right) \\
&\leq 2\exp\left(-\gamma \: (n-1) \cdot \frac{L^2}{16} \cdot \frac{16 k \log n}{\gamma (n-1) L^2} \right) \\
&= \frac{2}{n^k}.
\end{split}
\end{equation*}
The union bound then gives us:
\begin{equation*}
\P\left(\|\mathbf{d} - \mathbf{d}^\ast\|_\infty \geq \sqrt{\frac{16 kn \log n}{\gamma L^2}} \right)
\leq \sum_{i=1}^n \P\left(|d_i - d_i^\ast| \geq \sqrt{\frac{16 kn \log n}{\gamma L^2}} \right)
\leq \frac{2}{n^{k-1}}.
\end{equation*}

Assume now that $\|\mathbf{d} - \hat{\mathbf{d}}\|_\infty \leq \sqrt{16 kn \log n/(\gamma L^2)}$, which happens with probability at least $1-2/n^{k-1}$. From~\eqref{Eq:Consistency-R2} and using the triangle inequality, we get:
\begin{equation*}
\|\hat \theta\|_\infty
\leq \|\theta-\hat\theta\|_\infty + \|\theta\|_\infty
\leq \frac{8}{L} \: \sqrt{\frac{k\log n}{\gamma n}} \: \frac{(\exp(M + 2\|\hat \theta\|_\infty)-1)^2}{\exp(M + 2\|\hat \theta\|_\infty)} + M.
\end{equation*}
This means $\|\hat \theta\|_\infty$ satisfies the inequality $H_n(\|\hat \theta\|_\infty) \geq 0$, where $H_n(x)$ is the following function:
\begin{equation*}
H_n(x) = \frac{8}{L} \: \sqrt{\frac{k\log n}{\gamma n}} \: \frac{(\exp(M + 2x)-1)^2}{\exp(M + 2x)} - x + M.
\end{equation*}
One can easily verify that $H_n$ is a convex function, so $H_n$ assumes the value $0$ at most twice, and moreover, $H_n(x) \to \infty$ as $x \to \infty$. It is also easy to see that for all sufficiently large $n$, we have $H_n(2M) < 0$ and $H_n(\frac{1}{4} \log n) < 0$. Therefore, $H_n(\|\hat \theta\|_\infty) \geq 0$ implies either $\|\hat \theta\|_\infty < 2M$ or $\|\hat \theta\|_\infty > \frac{1}{4} \log n$. We claim that for sufficiently large $n$ we always have $\|\hat \theta\|_\infty < 2M$. Suppose the contrary that there are infinitely many $n$ for which $\|\hat \theta\|_\infty > \frac{1}{4} \log n$, and consider one such $n$. Since $\hat \theta_i + \hat \theta_j > 0$ for each $i \neq j$, there can be at most one index $i$ with $\hat \theta_i < 0$. We consider the following two cases.
\begin{enumerate}
  \item \textbf{Case 1:} suppose $\hat \theta_i \geq 0$ for all $i = 1,\dots,n$. Let $i^\ast$ be an index with $\hat \theta_{i^\ast} = \|\hat \theta\|_\infty > \frac{1}{4} \log n$. Then, since $\hat \theta_{i^\ast} + \hat \theta_j \geq \hat \theta_{i^\ast}$ for $j \neq i^\ast$, we have:
\begin{equation*}
\begin{split}
\frac{1}{\exp(M)-1} &\leq \frac{1}{n-1} \sum_{j \neq i^\ast} \frac{1}{\exp(\theta_{i^\ast} + \theta_j)-1} \\
&\leq \frac{1}{n-1} \left| \sum_{j \neq i^\ast} \frac{1}{\exp(\theta_{i^\ast}+\theta_j)-1} - \sum_{j \neq i^\ast} \frac{1}{\exp(\hat \theta_{i^\ast}+\hat \theta_j)-1} \right|
 + \frac{1}{n-1} \sum_{j \neq i^\ast} \frac{1}{\exp(\hat \theta_{i^\ast} + \hat \theta_j)-1} \\
&\leq \frac{1}{n-1} \| \mathbf{d} - \mathbf{d}^\ast \|_\infty + \frac{1}{\exp(\|\hat \theta\|_\infty)-1} \\
&\leq \frac{1}{n-1} \sqrt{\frac{16 kn \log n}{\gamma L^2}} + \frac{1}{n^{1/4}-1},
\end{split}
\end{equation*}
which cannot hold for sufficiently large $n$, as the last expression tends to $0$ as $n \to \infty$.

  \item \textbf{Case 2:} suppose $\hat \theta_i < 0$ for some $i = 1,\dots,n$, so $\hat \theta_j > 0$ for $j \neq i$. Without loss of generality assume $\hat \theta_1 < 0 < \hat \theta_2 \leq \cdots \leq \hat \theta_n$, so $\hat \theta_n = \|\hat \theta\|_\infty > \frac{1}{4} \log n$. Following the same chain of inequalities as in the previous case (with $i^\ast = n$), we obtain:
\begin{equation*}
\begin{split}
\frac{1}{\exp(M)-1} &\leq \frac{1}{n-1} \| \mathbf{d} - \mathbf{d}^\ast \|_\infty + \frac{1}{n-1} \left(\frac{1}{\exp(\hat \theta_n + \hat \theta_1)-1} + \sum_{j = 2}^{n-1} \frac{1}{\exp(\hat \theta_j + \hat \theta_n)-1} \right) \\
&\leq \frac{1}{n-1} \sqrt{\frac{16 kn \log n}{\gamma L^2}} + \frac{1}{(n-1)(\exp(\hat \theta_n + \hat \theta_1)-1)} + \frac{n-2}{(n-1)(\exp(\|\hat \theta\|_\infty)-1)} \\
&\leq \frac{1}{n-1} \sqrt{\frac{16 kn \log n}{\gamma L^2}} + \frac{1}{(n-1)(\exp(\hat \theta_n + \hat \theta_1)-1)} + \frac{1}{n^{1/4}-1}.
\end{split}
\end{equation*}
This implies:
\begin{equation*}
\begin{split}
\frac{1}{\exp(\hat \theta_1 + \hat \theta_n)-1}
&\geq (n-1)\left(\frac{1}{\exp(M)-1} - \frac{1}{n-1} \sqrt{\frac{16 kn \log n}{\gamma L^2}} - \frac{1}{n^{1/4}-1}\right)
\geq \frac{n}{2(\exp(M)-1)},
\end{split}
\end{equation*}
where the last inequality assumes $n$ is sufficiently large. Therefore, for $i = 2,\dots,n$:
\begin{equation*}
\frac{1}{\exp(\hat \theta_1 + \hat \theta_i)-1} \geq \frac{1}{\exp(\hat \theta_1 + \hat \theta_n)-1} \geq \frac{n}{2(\exp(M)-1)}.
\end{equation*}
However, this implies:
\begin{equation*}
\begin{split}
\sqrt{\frac{16 kn \log n}{\gamma L^2}} \geq \|\mathbf{d} - \mathbf{d}^\ast \|_\infty
\geq |d_1-d_1^\ast|
&\geq -\sum_{j=2}^n \frac{1}{\exp(\theta_1 + \theta_j)-1} +\sum_{j=2}^n \frac{1}{\exp(\hat\theta_1 + \hat\theta_n)-1} \\
&\geq - \frac{(n-1)}{\exp(L)-1} + \frac{n(n-1)}{2(\exp(M)-1)},
\end{split}
\end{equation*}
which cannot hold for sufficiently large $n$, as the right hand side in the last expression grows faster than the left hand side on the first line.
\end{enumerate}

The analysis above shows that we have $\|\hat \theta\|_\infty < 2M$ for all sufficiently large $n$. Plugging in this result to~\eqref{Eq:Consistency-R2} gives us:
\begin{equation*}
\begin{split}
\|\theta-\hat\theta\|_\infty
&\leq \frac{2}{n} \: \frac{(\exp(5M)-1)^2}{\exp(5M)} \: \sqrt{\frac{16 kn \log n}{\gamma L^2}}
\leq \frac{8 \: \exp(5M)}{L} \: \sqrt{\frac{k \log n}{\gamma n}}.
\end{split}
\end{equation*}
Finally, taking into account the issue of the existence of the MLE, we conclude that for sufficiently large $n$, with probability at least:
\begin{equation*}
\left(1-\frac{1}{n^{k-1}}\right)\left(1-\frac{2}{n^{k-1}}\right) \geq 1-\frac{3}{n^{k-1}},
\end{equation*}
the MLE $\hat \theta \in \Theta$ exists and satisfies:
\begin{equation*}
\|\theta-\hat\theta\|_\infty
\leq \frac{8 \: \exp(5M)}{L} \: \sqrt{\frac{k \log n}{\gamma n}},
\end{equation*}
as desired. This finishes the proof of Theorem~\ref{Thm:ConsistencyInfiniteDisc}.

%--------------------
\subsection{Proofs for the continuous weighted graphs}
\label{Sec:ProofCont}

In this section we present the proofs of the results presented in Section~\ref{Sec:Cont}.

%-------------------
\subsubsection{Proof of Theorem~\ref{Thm:GraphicalCont}}

Clearly if $(d_1, \dots, d_n) \in \R_0^n$ is a graphical sequence, then so is $(d_{\pi(1)}, \dots, d_{\pi(n)})$, for any permutation $\pi$ of $\{1,\dots,n\}$. Thus, without loss of generality we can assume $d_1 \geq d_2 \geq \cdots \geq d_n$, and in this case condition~\eqref{Eq:GraphicalCont} reduces to:
\begin{equation}\label{Eq:ErdosGallai-R2}
d_1 \leq \sum_{i=2}^n d_i.
\end{equation}

First suppose $(d_1, \dots, d_n) \in \R_0^n$ is graphic, so it is the degree sequence of a graph with adjacency matrix $\mathbf{a} = (a_{ij})$. Then condition~\eqref{Eq:ErdosGallai-R2} is satisfied since:
\begin{equation*}
d_1 = \sum_{i=2}^n a_{1i} \leq \sum_{i=2}^n \sum_{j \neq i} a_{ij} = \sum_{i=2}^n d_i.
\end{equation*}
For the converse direction, we first note the following easy properties of weighted graphical sequences:
\begin{enumerate}[(i)]
  \item\label{p:Cha} The sequence $(c, c, \dots, c) \in \R_0^n$ is graphic for any $c \in \R_0$, realized by the ``cycle graph'' with weights $a_{i,i+1} = c/2$ for $1 \leq i \leq n-1$, $a_{1n} = c/2$, and $a_{ij} = 0$ otherwise.

  \item\label{p:Eq} A sequence $\mathbf{d} = (d_1, \dots, d_n) \in \R_0^n$ satisfying~\eqref{Eq:ErdosGallai-R2} with an equality is graphic, realized by the ``star graph'' with weights $a_{1i} = d_i$ for $2 \leq i \leq n$ and $a_{ij} = 0$ otherwise.

  \item\label{p:Ext} If $\mathbf{d} = (d_1, \dots, d_n) \in \R_0^n$ is graphic, then so is $\overline{\mathbf{d}} = (d_1, \dots, d_n, 0, \dots, 0) \in \R_0^{n'}$ for any $n' \geq n$, realized by inserting $n'-n$ isolated vertices to the graph that realizes $\mathbf{d}$.
  
  \item\label{p:Sum} If $\mathbf{d}^{(1)}, \mathbf{d}^{(2)} \in \R_0^n$ are graphic, then so is $\mathbf{d}^{(1)} + \mathbf{d}^{(2)}$, realized by the graph whose edge weights are the sum of the edge weights of the graphs realizing $\mathbf{d}^{(1)}$ and $\mathbf{d}^{(2)}$.
\end{enumerate}

We now prove the converse direction by induction on $n$. For the base case $n = 3$, it is easy to verify that $(d_1,d_2,d_3)$ with $d_1 \geq d_2 \geq d_3 \geq 0$ and $d_1 \leq d_2 + d_3$ is the degree sequence of the graph $G$ with edge weights:
\begin{equation*}
a_{12} = \frac{1}{2}(d_1 + d_2 - d_3) \geq 0, \qquad
a_{13} = \frac{1}{2}(d_1 - d_2 + d_3) \geq 0, \qquad
a_{23} = \frac{1}{2}(-d_1 + d_2 + d_3) \geq 0.
\end{equation*}
Assume that the claim holds for $n-1$; we will prove it also holds for $n$. So suppose we have a sequence $\mathbf{d} = (d_1, \dots, d_n)$ with $d_1 \geq d_2 \geq \cdots \geq d_n \geq 0$ satisfying~\eqref{Eq:ErdosGallai-R2}, and set:
\begin{equation*}
K = \frac{1}{n-2} \left( \sum_{i=2}^n d_i - d_1 \right) \geq 0
\end{equation*}
If $K = 0$ then~\eqref{Eq:ErdosGallai-R2} is satisfied with an equality, and by property~\eqref{p:Eq} we know that $\mathbf{d}$ is graphic. Now assume $K > 0$. We consider two possibilities.
\begin{enumerate}
  \item Suppose $K \geq d_n$. Then we can write $\mathbf{d} = \mathbf{d}^{(1)} + \mathbf{d}^{(2)}$, where:
\begin{equation*}
\mathbf{d}^{(1)} = (d_1-d_n, \: d_2-d_n, \: \dots, \: d_{n-1}-d_n, \: 0) \in \R_0^n
\end{equation*}
and
\begin{equation*}
\mathbf{d}^{(2)} = (d_n, d_n, \dots, d_n) \in \R_0^n.
\end{equation*}
The assumption $K \geq d_n$ implies $d_1-d_n \leq \sum_{i=2}^{n-1} (d_i-d_n)$, so $(d_1-d_n, d_2-d_n, \dots, d_{n-1}-d_n) \in \R_0^{n-1}$ is a graphical sequence by induction hypothesis. Thus, $\mathbf{d}^{(1)}$ is also graphic by property~\eqref{p:Ext}. Furthermore, $\mathbf{d}^{(2)}$ is graphic by property~\eqref{p:Cha}, so $\mathbf{d} = \mathbf{d}^{(1)} + \mathbf{d}^{(2)}$ is also a graphical sequence by property~\eqref{p:Sum}.

  \item Suppose $K < d_n$. Then write $\mathbf{d} = \mathbf{d}^{(3)} + \mathbf{d}^{(4)}$, where:
\begin{equation*}
\mathbf{d}^{(3)} = (d_1-K, \: d_2-K, \: \dots, \: d_n-K) \in \R_0^n, and
\end{equation*}
\begin{equation*}
\mathbf{d}^{(4)} = (K, K, \dots, K) \in \R_0^n.
\end{equation*}
By construction, $\mathbf{d}^{(3)}$ satisfies $d_1-K = \sum_{i=2}^n (d_i-K)$, so $\mathbf{d}^{(3)}$ is a graphical sequence by property~\eqref{p:Eq}. Since $\mathbf{d}^{(4)}$ is also graphic by property~\eqref{p:Cha}, we conclude that $\mathbf{d} = \mathbf{d}^{(3)} + \mathbf{d}^{(4)}$ is graphic by property~\eqref{p:Sum}.
\end{enumerate}
This completes the induction step and finishes the proof of Theorem~\ref{Thm:GraphicalCont}.

%-------------------------
\subsubsection{Proof of Lemma~\ref{Lem:W-Convex}}

We first prove that $\W$ is convex. Given $\mathbf{d} = (d_1, \dots, d_n)$ and $\mathbf{d}' = (d_1', \dots, d_n')$ in $\W$, and given $0 \leq t \leq 1$, we note that:
\begin{equation*}
\begin{split}
\max_{1 \leq i \leq n} \big( t d_i + (1-t) d_i' \big)
&\leq t \max_{1 \leq i \leq n} d_i + (1-t) \max_{1 \leq i \leq n} d_i' \\
&\leq \frac{1}{2} t \sum_{i=1}^n d_i + \frac{1}{2} (1-t) \sum_{i=1}^n d_i' \\
&= \frac{1}{2} \sum_{i=1}^n \big( t d_i + (1-t) d_i' \big),
\end{split}
\end{equation*}
which means $t\mathbf{d} + (1-t) \mathbf{d}' \in \W$.

Next, recall that we already have $\M \subseteq \conv(\W) = \W$ from Proposition~\ref{Prop:MConvW}, so to conclude $\M = \W$ it remains to show that $\W \subseteq \M$. Given $\mathbf{d} \in \W$, let $G$ be a graph that realizes $\mathbf{d}$ and let $\mathbf{w} = (w_{ij})$ be the edge weights of $G$, so that $d_i = \sum_{j \neq i} w_{ij}$ for all $i = 1,\dots,n$. Consider a distribution $\P$ on $\R_0^{\binom{n}{2}}$ that assigns each edge weight $A_{ij}$ to be an independent exponential random variable with mean parameter $w_{ij}$, so $\P$ has density:
\begin{equation*}
p(\mathbf{a}) = \prod_{\{i,j\}} \frac{1}{w_{ij}} \exp\left(-\frac{a_{ij}}{w_{ij}}\right),
\quad \mathbf{a} = (a_{ij}) \in \R_0^{\binom{n}{2}}.
\end{equation*}
Then by construction, we have $\E_\P[A_{ij}] = w_{ij}$ and:
\begin{equation*}
\E_\P[\deg_i(A)] = \sum_{j \neq i} \E_\P[A_{ij}] = \sum_{j \neq i} w_{ij} = d_i, \quad i = 1, \dots, n.
\end{equation*}
This shows that $\mathbf{d} \in \M$, as desired.

%-------------------------
\subsubsection{Proof of Theorem~\ref{Thm:ConsistencyCont}}\label{Sec:ProofConsistencyCont}

We first prove that the MLE $\hat \theta$ exists almost surely. Recall from the discussion in Section~\ref{Sec:Cont} that $\hat \theta$ exists if and only if $\mathbf{d} \in \M^\circ$. Clearly $\mathbf{d} \in \W$ since $\mathbf{d}$ is the degree sequence of the sampled graph $G$. Since $\M = \W$ (Lemma~\ref{Lem:W-Convex}), we see that the MLE $\hat \theta$ does not exist if and only if $\mathbf{d} \in \partial \M = \M \setminus \M^\circ$, where:
\begin{equation*}
\partial \M = \left\{ \mathbf{d}' \in \R_0^n \colon \min_{1 \leq i \leq n} d_i' = 0 \; \text{ or } \; \max_{1 \leq i \leq n} d_i' = \frac{1}{2} \sum_{i=1}^n d_i' \right\}.
\end{equation*}
In particular, note that $\partial \M$ has Lebesgue measure $0$. Since the distribution $\P^\ast$ on the edge weights $A = (A_{ij})$ is continuous (being a product of exponential distributions) and $\mathbf{d}$ is a continuous function of $A$, we conclude that $\P^\ast(\mathbf{d} \in \partial \M) = 0$, as desired.

We now prove the consistency of $\hat \theta$. Recall that $\theta$ is the true parameter that we wish to estimate, and that the MLE $\hat \theta$ satisfies $-Z(\hat \theta) = \mathbf{d}$. Let $\mathbf{d}^\ast = -\nabla Z(\theta)$ denote the expected degree sequence of the maximum entropy distribution $\P^\ast_\theta$. By the mean value theorem for vector-valued functions~\cite[p.~341]{Lang}, we can write:
\begin{equation}\label{Eq:ConsistencyContMVT}
\mathbf{d} - \mathbf{d}^\ast = \nabla Z(\theta) - \nabla Z(\hat \theta) = J(\theta - \hat \theta).
\end{equation}
Here $J$ is a matrix obtained by integrating (element-wise) the Hessian $\nabla^2 Z$ of the log-partition function on intermediate points between $\theta$ and $\hat \theta$:
\begin{equation*}
J = \int_0^1 \nabla^2 Z(t \theta + (1-t) \hat \theta) \: dt.
\end{equation*}

Recalling that $-\nabla Z(\theta) = \E_\theta[\deg(A)]$, at any intermediate point $\xi \equiv \xi(t) = t \theta + (1-t) \hat \theta$, we have:
\begin{equation*}
\big(\nabla Z(\xi)\big)_i = -\sum_{j \neq i} \mu(\xi_i + \xi_j) = -\sum_{j \neq i} \frac{1}{\xi_i + \xi_j}.
\end{equation*} 
Therefore, the Hessian $\nabla^2 Z$ is given by:
\begin{equation*}
\big( \nabla^2 Z(\xi) \big)_{ij} = \frac{1}{(\xi_i + \xi_j)^2}, \quad i \neq j,
\end{equation*}
\begin{equation*}
\big( \nabla^2 Z(\xi) \big)_{ii} = \sum_{j \neq i} \frac{1}{(\xi_i+\xi_j)^2} = \sum_{j \neq i} \big( \nabla^2 Z(\xi) \big)_{ij}.
\end{equation*}
Since $\theta,\theta' \in \Theta$ and we assume $\theta_i+\theta_j \leq M$, it follows that for $i \neq j$:
\begin{equation*}
0 < \xi_i + \xi_j \leq \max\{\theta_i+\theta_j, \: \hat \theta_i + \hat \theta_j\} \leq \max\{M, 2\|\hat\theta\|_\infty\} \leq M + 2\|\hat \theta\|_\infty.
\end{equation*}
Therefore, the Hessian $\nabla^2 Z$ is a {\em diagonally balanced} matrix with off-diagonal entries bounded below by $1/(M + 2\|\hat \theta\|_\infty)^2$.
In particular, $J$ is also a symmetric, diagonally balanced matrix with off-diagonal entries bounded below by $1/(M + 2\|\hat \theta\|_\infty)^2$, being an average of such matrices. By Theorem~\ref{Thm:Main}, $J$ is invertible and its inverse satisfies the bound:
\begin{equation*}
\|J^{-1}\|_\infty \leq \frac{(M+2\|\hat\theta\|_\infty)^2(3n-4)}{2(n-1)(n-2)} \leq \frac{2}{n} \: (M + 2\|\hat \theta\|_\infty)^2,
\end{equation*}
where the last inequality holds for $n \geq 7$. Inverting $J$ in~\eqref{Eq:ConsistencyContMVT} and applying the bound on $\|J^{-1}\|_\infty$ gives:
\begin{equation}\label{Eq:Consistency-R1}
\|\theta-\hat\theta\|_\infty
\leq \|J^{-1}\|_\infty \: \|\mathbf{d} - \mathbf{d}^\ast\|_\infty
\leq \frac{2}{n} \: (M + 2\|\hat \theta\|_\infty)^2 \: \|\mathbf{d} - \mathbf{d}^\ast\|_\infty.
\end{equation}

Let $A = (A_{ij})$ denote the edge weights of the sampled graph $G \sim \P^\ast_\theta$, so $d_i = \sum_{j \neq i} A_{ij}$ for $i = 1,\dots,n$. Moreover, since $\mathbf{d}^\ast$ is the expected degree sequence from the distribution $\P^\ast_\theta$, we also have $d_i^\ast = \sum_{j \neq i} 1/(\theta_i+\theta_j)$. Recall that $A_{ij}$ is an exponential random variable with rate $\lambda = \theta_i + \theta_j \geq L$, so by Lemma~\ref{Lem:SubExp-Exp}, $A_{ij} - 1/(\theta_i + \theta_j)$ is sub-exponential with parameter $2/(\theta_i+\theta_j) \leq 2/L$. For each $i = 1,\dots,n$, the random variables $(A_{ij} - 1/(\theta_i+\theta_j), j \neq i)$ are independent sub-exponential random variables, so we can apply the concentration inequality in Theorem~\ref{Thm:ConcIneqSubExp} with $\kappa = 2/L$ and:
\begin{equation*}
\epsilon = \left(\frac{4k \log n}{\gamma (n-1) L^2} \right)^{1/2}.
\end{equation*}
Assume $n$ is sufficiently large such that $\epsilon/\kappa = \sqrt{k \log n / \gamma (n-1)} \leq 1$. Then by Theorem~\ref{Thm:ConcIneqSubExp}, for each $i = 1,\dots,n$ we have:
\begin{equation*}
\begin{split}
\P\left(|d_i - d_i^\ast| \geq \sqrt{\frac{4k n \log n}{\gamma L^2}} \right)
&\leq \P\left(|d_i - d_i^\ast| \geq \sqrt{\frac{4 k (n-1) \log n}{\gamma L^2}} \right) \\
&= \P\left(\Bigg|\frac{1}{n-1}\sum_{j \neq i} \left(A_{ij}-\frac{1}{\theta_i+\theta_j}\right)\Bigg| \geq \sqrt{\frac{4k \log n}{\gamma (n-1) L^2}} \right) \\
&\leq 2\exp\left(-\gamma \: (n-1) \cdot \frac{L^2}{4} \cdot \frac{4k \log n}{\gamma (n-1) L^2}\right) \\
&= \frac{2}{n^k}.
\end{split}
\end{equation*}
By the union bound, it follows that:
\begin{equation*}
\begin{split}
\P\Bigg(\|\mathbf{d} - \mathbf{d}^\ast\|_\infty \geq \sqrt{\frac{4k n \log n}{\gamma L^2}} \; \Bigg)
&\leq \sum_{i=1}^n \P\left(|d_i - d_i^\ast| \geq \sqrt{\frac{4k n \log n}{\gamma L^2}}\right)
\leq \frac{2}{n^{k-1}}.
\end{split}
\end{equation*}

Assume for the rest of this proof that $\|\mathbf{d} - \mathbf{d}^\ast\|_\infty \leq \sqrt{4kn \log n/(\gamma L^2)}$, which happens with probability at least $1-2/n^{k-1}$. From~\eqref{Eq:Consistency-R1} and using the triangle inequality, we get:
\begin{equation*}
\|\hat \theta\|_\infty
\leq \|\theta-\hat\theta\|_\infty + \|\theta\|_\infty
\leq \frac{4}{L} \: \sqrt{\frac{k \log n}{\gamma n}} \: (M + 2\|\hat \theta\|_\infty)^2 + M.
\end{equation*}
What we have shown is that for sufficiently large $n$, $\|\hat \theta\|_\infty$ satisfies the inequality $G_n(\|\hat \theta\|_\infty) \geq 0$, where $G_n(x)$ is the quadratic function:
\begin{equation*}
G_n(x) = \frac{4}{L} \: \sqrt{\frac{k \log n}{\gamma n}} \: (M + 2x)^2 - x + M.
\end{equation*}
It is easy to see that for sufficiently large $n$ we have $G_n(2M) < 0$ and $G_n(\log n) < 0$. Thus, $G_n(\|\hat \theta\|_\infty) \geq 0$ means either $\|\hat \theta\|_\infty < 2M$ or $\|\hat \theta\|_\infty > \log n$. We claim that for sufficiently large $n$ we always have $\|\hat \theta\|_\infty < 2M$. Suppose the contrary that there are infinitely many $n$ for which $\|\hat \theta\|_\infty > \log n$, and consider one such $n$. Since $\hat \theta \in \Theta$ we know that $\hat \theta_i + \hat \theta_j > 0$ for each $i \neq j$, so there can be at most one index $i$ with $\hat \theta_i < 0$. We consider the following two cases.
\begin{enumerate}
  \item \textbf{Case 1:} suppose $\hat \theta_i \geq 0$ for all $i = 1,\dots,n$. Let $i^\ast$ be an index with $\hat \theta_{i^\ast} = \|\hat \theta\|_\infty > \log n$. Then, using the fact that $\hat \theta$ satisfies the system of equations~\eqref{Eq:MLEEqCont} and $\hat \theta_{i^\ast} + \hat \theta_j \geq \hat \theta_{i^\ast}$ for $j \neq i^\ast$, we see that:
\begin{equation*}
\begin{split}
\frac{1}{M} &\leq \frac{1}{n-1} \sum_{j \neq i^\ast} \frac{1}{\theta_{i^\ast} + \theta_j} \\
&\leq \frac{1}{n-1} \left| \sum_{j \neq i^\ast} \frac{1}{\theta_{i^\ast}+\theta_j} - \sum_{j \neq i^\ast} \frac{1}{\hat \theta_{i^\ast}+\hat \theta_j} \right| + \frac{1}{n-1} \sum_{j \neq i^\ast} \frac{1}{\hat \theta_{i^\ast} + \hat \theta_j} \\
&= \frac{1}{n-1} \left| d_i^\ast - d_i \right| + \frac{1}{n-1} \sum_{j \neq i^\ast} \frac{1}{\hat \theta_{i^\ast} + \hat \theta_j} \\
&\leq \frac{1}{n-1} \| \mathbf{d}^\ast - \mathbf{d} \|_\infty + \frac{1}{\|\hat \theta\|_\infty} \\
&\leq \frac{1}{n-1} \: \sqrt{\frac{4k n \log n}{\gamma L^2}} + \frac{1}{\log n},
\end{split}
\end{equation*}
which cannot hold for sufficiently large $n$, as the last expression tends to $0$ as $n \to \infty$.

  \item \textbf{Case 2:} suppose $\hat \theta_i < 0$ for some $i = 1,\dots,n$, so $\hat \theta_j > 0$ for $j \neq i$ since $\hat \theta \in \Theta$. Without loss of generality assume $\hat \theta_1 < 0 < \hat \theta_2 \leq \cdots \leq \hat \theta_n$, so $\hat \theta_n = \|\hat\theta\|_\infty > \log n$. Following the same chain of inequalities as in the previous case (with $i^\ast = n$), we obtain:
\begin{equation*}
\begin{split}
\frac{1}{M} &\leq \frac{1}{n-1} \| \mathbf{d}^\ast - \mathbf{d} \|_\infty + \frac{1}{n-1} \left(\frac{1}{\hat \theta_n + \hat \theta_1} + \sum_{j = 2}^{n-1} \frac{1}{\hat \theta_j + \hat \theta_n} \right) \\
&\leq \frac{1}{n-1} \: \sqrt{\frac{4k n \log n}{\gamma L^2}} + \frac{1}{(n-1)(\hat \theta_n + \hat \theta_1)} + \frac{n-2}{(n-1)\|\hat \theta\|_\infty} \\
&\leq \frac{1}{n-1} \: \sqrt{\frac{4k n \log n}{\gamma L^2}} + \frac{1}{(n-1)(\hat \theta_n + \hat \theta_1)} + \frac{1}{\log n}.
\end{split}
\end{equation*}
So for sufficiently large $n$:
\begin{equation*}
\frac{1}{\hat \theta_1 + \hat \theta_n} \geq (n-1)\left(\frac{1}{M} - \frac{1}{n-1} \: \sqrt{\frac{4k n \log n}{\gamma L^2}} - \frac{1}{\log n}\right) \geq \frac{n}{2M},
\end{equation*}
and thus $\hat \theta_1 + \hat \theta_i \leq \hat \theta_1 + \hat \theta_n \leq 2M/n$ for each $i = 2,\dots,n$. However, then:
\begin{equation*}
\begin{split}
\sqrt{\frac{4k n \log n}{\gamma L^2}} &\geq \|\mathbf{d}^\ast - \mathbf{d} \|_\infty
\geq |d_1^\ast - d_1|
\geq -\sum_{j=2}^n \frac{1}{\theta_1 + \theta_j} +\sum_{j=2}^n \frac{1}{\hat\theta_1 + \hat\theta_j}
\geq - \frac{(n-1)}{L} + \frac{n(n-1)}{2M},
\end{split}
\end{equation*}
which cannot hold for sufficiently large $n$, as the right hand side of the last expression tends to $\infty$ faster than the left hand side.
\end{enumerate}
The analysis above shows that $\|\hat \theta\|_\infty < 2M$ for all sufficiently large $n$. Plugging in this result to~\eqref{Eq:Consistency-R1}, we conclude that for sufficiently large $n$, with probability at least $1-2n^{-(k-1)}$, we obtain the desired bound:
\begin{equation*}
\|\theta-\hat\theta\|_\infty \leq \frac{2}{n} \: (5M)^2 \: \sqrt{\frac{4k n \log n}{\gamma L^2}} = \frac{100M^2}{L} \sqrt{\frac{k \log n}{\gamma n}}.
\end{equation*}

%-------------------------
\section{Discussion and future work}
\label{Sec:Discussion}

In this paper, we studied maximum entropy distributions on undirected graphs with a given expected degree sequence; in particular, we focused on three weight classes: finite discrete graphs, infinite discrete graphs, and continuous graphs. The corresponding models are characterized by independent edge weights parameterized by a vector of vertex potentials. We examined the problem of finding the maximum likelihood estimate of the original ensemble parameters, and we proved its remarkable consistency from a single graph sample.

In the case of finite discrete weighted graphs, we also give an efficient fixed-point algorithm for finding the MLE with a geometric rate of convergence. On the other hand, computing the MLE for infinite discrete or continuous weighted graphs can be performed via standard gradient-based methods, and the bounds we have proved on the inverse Hessian of the log-partition function can provide a rate of convergence for these methods. However, it would be interesting to develop fast algorithms (e.g., using block coordinate methods \cite{xu2013block}) for computing the MLE, similar to the case of finite discrete graphs.  From the standpoint of neuroscience theory, moreover, is there an approach that provides for neurally plausible algorithms?  We remark that one strategy could be to generalize Theorem~\ref{Thm:MLEAlgFiniteDisc} to settings such as the unbounded models studied here.

\begin{problem}
Develop fast (biologically plausible) algorithms to solve for maximum entropy  parameters.
\end{problem}

Recently, Bernstein's inequality \cite{bernstein1924} was applied in \cite{hillartran} to demonstrate robust exponential storage of certain graphs (cliques) in the Hopfield discrete recurrent neural network model.  
As part of a general discussion on using large deviation theory to understand efficiency of neural systems, we offer the following problem.

\begin{figure}[t!]
	\begin{center}
\includegraphics[width=1.5 in]{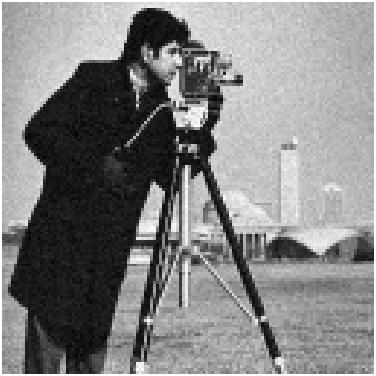}
\includegraphics[width=1.7 in]{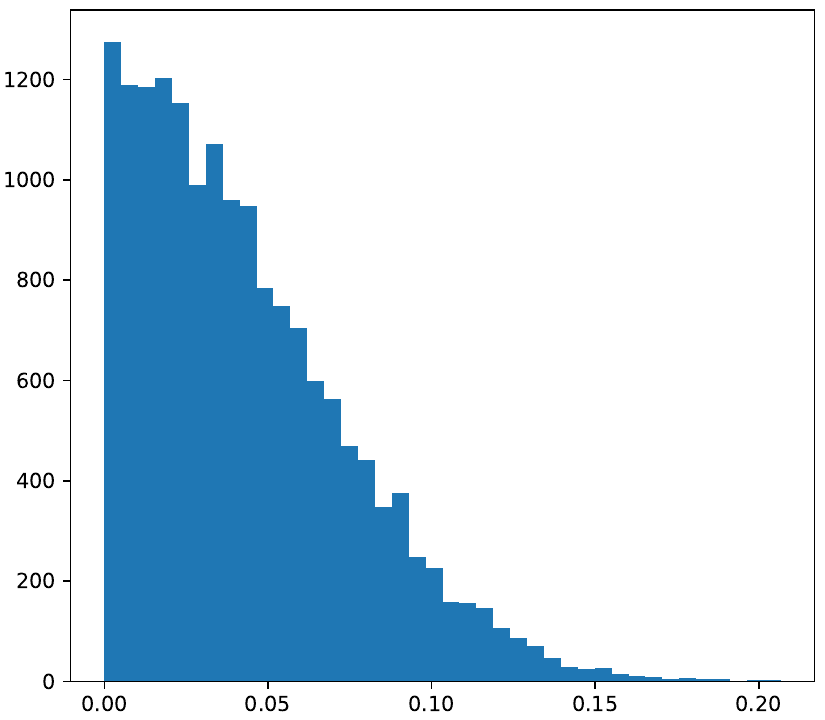} 
\includegraphics[width=1.5 in]{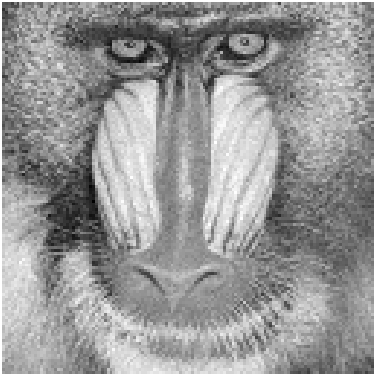}
\includegraphics[width=1.7 in]{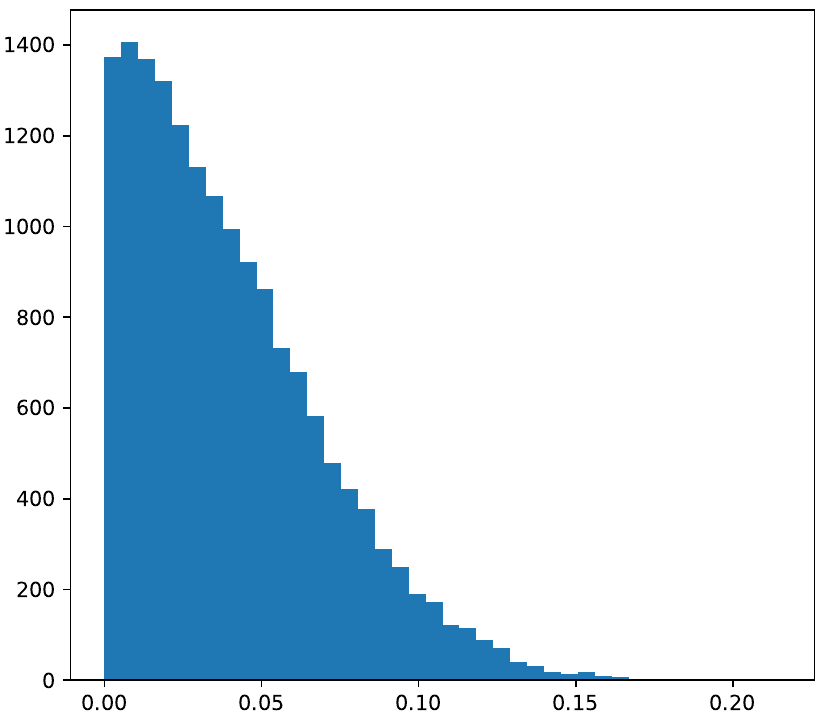} 

\caption{\small{\textbf{Sparse graph distributions for sensor quantization}. Reconstructions using Theorem~\ref{Thm:MLEAlgFiniteDisc} as in third column of Figure \ref{fig_imgs_app_1} except that only $10\%$ of edges are allowed to be active in a graph sample. To the right of each reconstruction is a histogram of absolute differences between original (first column of Figure \ref{fig_imgs_app_1}) and reconstruction.}}
\label{fig_imgs_app_sparse}
\end{center}
\end{figure}

%\begin{problem}
%Develop biologically plausible efficient algorithms.
%\end{problem}

\begin{problem}
Develop more applications of large deviation theory to (biologically plausible) computation.
\end{problem}

Given a fixed distortion tolerable by a system, it is known that there is an optimal (in terms of rate or coding cost) stochastic representation of the input with this expected error. Somewhat remarkably, such optimal codings can often necessitate a discrete alphabet, according to classical work \cite{smith1971information, fix1978rate, Rose94} (see \cite{jung2015discrete} for recent applications to behavioral economics and \cite{hillarmarzen} for bio-inspired image compression).
%, Jones61, oettli1974, fix1978rate, berger88, Rose94}. %  farber1967kanalkapazitat, smith1971information
%, shamai95} (for some recent work with more references, see \cite{elmoslimany2018}).  oettli1974
% For some recent work on discreteness of optimal distributions (in the context of channel capacity), see \cite{elmoslimany2018}.
This might explain some of the discrete aspects of brains, but sometimes coding cost is not the only constraint in a biological system, and thus does not obviate continuous sampling strategies such as spike timing.  Even in this continuous setting, an unresolved controversy in neuroscience is whether information is contained in the precise timings of these spikes or primarily in their firing rates.  
This motivates the following challenge.
\begin{problem}
Develop a computational theory of optimal codings of continuous distributions.
\end{problem}

Another interesting research direction is to explore the theory of maximum entropy distributions when additional restrictions on the underlying graph are imposed. For instance, one can start with an arbitrary graph  $G_0$ on $n$ vertices, such as a lattice graph or a sparse graph, and consider the maximum entropy distributions on the subgraphs $G$ of $G_0$. By choosing different types of underlying structure $G_0$, we can incorporate additional prior information from the specific applications in consideration.  

\begin{question}
How does the mathematics change when the graph (hypergraph) is restricted to a fixed $G_0$?
\end{question}

For generalizations to a hypergraph version of the $\beta$-model, see \cite{stasi2014beta}. Also, some recent work on sparse random graphs in a physics context appears in \cite{van2017sparse} (and 
see \cite{mukherjee2018detection} for applications to real world networks). The image quantization example from the  introduction also works with sparsity imposed on the underlying graph.  See Figure 
\ref{fig_imgs_app_sparse}, which shows reconstructions and errors when only $10\%$ of edges are allowed to be active.

% (for instance, \cite{rinaldo2013maximum} already contains some results in the case of undirected graph distributions).

Given the connections briefly touched upon here, we hope that much more interesting algebraic geometry emerges from this circle of ideas.

\begin{question}
What is the algebraic geometry for the first two sets of equations from the introduction?
\end{question}

Finally, and more speculatively, connections to the theory of graph limits  
\cite{lovasz2006limits}, which were an inspiration for the work of \cite{Chatterjee}, should also be explored, especially in the unbounded edge weight cases.

%---------
\bibliographystyle{plain}
\bibliography{maxentgraphs}

\end{document}